\documentclass[12pt]{article}

\usepackage{subcaption}

\usepackage{float}

\usepackage{multirow}

\usepackage{tikz}

\usetikzlibrary{calc,backgrounds,arrows,matrix}

\usepackage{enumerate}
\usepackage{enumitem}

\usepackage{color}
\definecolor{lightblue}{rgb}{0,0.2,0.5}

\usepackage[colorlinks=true, urlcolor=blue,linkcolor=blue, citecolor=lightblue]{hyperref}

\usepackage[round,sort,comma,numbers]{natbib}

\bibpunct{\textcolor{lightblue}{(}}{\textcolor{lightblue}{)}}{,}{a}{}{;}

\usepackage{amssymb,amsmath}

\newtheorem{lem}{Lemma}
\usepackage{graphicx}

\usepackage{graphics}
\usepackage{color}

\usepackage{tikz}
\usetikzlibrary{trees}

\DeclareMathAlphabet{\eufrak}{U}{}{}{}
\SetMathAlphabet\eufrak{normal}{U}{euf}{m}{n}
\SetMathAlphabet\eufrak{bold}{U}{euf}{b}{n}

\allowdisplaybreaks

 \def\qu{{\mathord{\mathbb Z}}}

 \def\sZZ{{\rm Z\kern-.45em{}Z}}

 \def\sQQ{{\kern 0.27em \vrule height1.45ex width0.03em depth0em
           \kern-0.30em \rm Q}}
 \def\qu{{\mathchoice
         {\sQQ}
         {\sQQ}
   {\kern 0.225em \vrule height1.05ex width0.025em depth0em \kern-0.25em \rm Q}
   {\kern 0.180em \vrule height0.78ex width0.020em depth0em \kern-0.20em \rm Q}
         }}
 \def\sGG{{\kern 0.27em \vrule height1.45ex width0.03em depth0em
           \kern-0.30em \rm G}}
 \def\gg{{\mathchoice
         {\sGG}
         {\sGG}
   {\kern 0.225em \vrule height1.05ex width0.025em depth0em \kern-0.25em \rm G}
   {\kern 0.180em \vrule height0.78ex width0.020em depth0em \kern-0.20em \rm G}
         }}

 \newtheorem{prop}{Proposition}[section]
 
 \newtheorem{definition}[prop]{Definition}
 \newtheorem{corollary}[prop]{Corollary}
 \newtheorem{theorem}[prop]{Theorem}
 \newtheorem{remark}[prop]{Remark}
 
\newtheorem{claim}{Claim}
\numberwithin{equation}{section}

 \def\P{{\mathord{\mathbb P}}}

 \newcounter{hyp}
\usepackage{aeguill}

\newenvironment{Proof}{\removelastskip\par\medskip \noindent{\em Proof.} \rm}{\penalty-20\null\hfill$\square$\par\medbreak}

\def\bprf{\begin{Proof}}
\def\nprf{\end{Proof}}
\def\bdes{\begin{description}}
\def\ndes{\end{description}}

\newtheorem{thm}{Theorem}[section]

\def\bdef{\begin{defn}}
\def\ndef{\end{defn}}
\def\bthm{\begin{thm}}
\def\nthm{\end{thm}}
\def\bprop{\begin{prop}}
\def\nprop{\end{prop}}
\def\brmk{\begin{remark}}
\def\nrmk{\end{remark}}
\def\bexa{\begin{exa}}
\def\nexa{\end{exa}}
\def\blem{\begin{lem}}
\def\nlem{\end{lem}}
\def\bcor{\begin{cor}}
\def\ncor{\end{cor}}
\def\bexe{\begin{exe}}
\def\nexe{\end{exe}}

\newcommand{\E}{\mathbb{E}}

\def\og{\leavevmode\raise.3ex
     \hbox{$\scriptscriptstyle\langle\!\langle$~}}
\def\fg{\leavevmode\raise.3ex
     \hbox{~$\!\scriptscriptstyle\,\rangle\!\rangle$}~}

\title{\Huge
{Regularization} of {Hyperbolic} Stochastic Partial Differential Equations By Two Fractional Brownian Sheets
}

\author{
  Rachid Belfadli \footnote{ Department of Mathematics, Faculty
of Sciences and Technologies,
Laboratory of Mathematics, Artificial
Intelligence and Sustainable Technologies,Cadi Ayyad University 2390 Marrakesh, Morocco.\newline
E-mail: rachid.belfadli@uca.ac.ma  }, \quad  \quad Youssef Ouknine \footnote{Department of Mathematics, Faculty of Sciences Semlalia, Cadi Ayyad University, B.P. 2390, Marrakesh, 40.000, Morocco.}\, \footnote{Mohammed VI Polytechnic University, Africa Business School, Avenue Mohammed Ben Abdellah Regragui,
Madinat  Al Irfane, BP 6380, RABAT, Morocco.\newline  E-mail: youssef.ouknine@um6p.ma}  \quad and 
  \quad Ercan S\"{o}nmez\footnote{Faculty of Mathematics, Ruhr University Bochum, 44780 Bochum, Germany.\newline
  E-mail: ercan.soenmez@rub.de} \quad   
}

\allowdisplaybreaks

\usepackage{empheq}

\usepackage{bbm}

\makeatletter
\newcommand*\rel@kern[1]{\kern#1\dimexpr\macc@kerna}
\newcommand*\widebar[1]{
  \begingroup
  \def\mathaccent##1##2{
    \rel@kern{0.8}
    \overline{\rel@kern{-0.8}\macc@nucleus\rel@kern{0.2}}
    \rel@kern{-0.2}
  }
  \macc@depth\@ne
  \let\math@bgroup\@empty \let\math@egroup\macc@set@skewchar
  \mathsurround\z@ \frozen@everymath{\mathgroup\macc@group\relax}
  \macc@set@skewchar\relax
  \let\mathaccentV\macc@nested@a
  \macc@nested@a\relax111{#1}
  \endgroup
}
\makeatother

\let\oldcitet=\citet
\let\oldcitep=\citep
\renewcommand{\cite}[1]{\textcolor[rgb]{0,0,1}{\oldcitet{#1}}}
\renewcommand{\citet}[1]{\textcolor[rgb]{0,0,1}{\oldcitet{#1}}}
\renewcommand{\citep}[1]{\textcolor[rgb]{0,0,1}{\oldcitep{#1}}}

\begin{document}

\maketitle

\baselineskip0.6cm

\vspace{-0.6cm}

\begin{abstract}
{In this paper, we establish existence and uniqueness of strong solutions for a stochastic 
differential equation driven by an additive noise given by the sum of two correlated 
fractional Brownian sheets with different Hurst parameters. Our analysis relies on techniques 
from two-parameter fractional calculus and a tailored version of Girsanov's theorem. 
The main challenge arises from the correlation between the two noises and the technical 
requirements for applying Girsanov's theorem in this setting. We show that, despite these 
difficulties, the additive noise regularizes the equation, allowing well-posedness under 
weak assumptions on the drift.
}
\end{abstract}

\noindent
{\em Keywords}:
Fractional Brownian sheet,
hyperbolic stochastic PDEs,
regularization by noise,
Girsanov theorem,
fractional calculus.

\noindent
    {\em Mathematics Subject Classification (2020): Primary 60H15; Secondary 60G22, 60H05.}

\baselineskip0.7cm

\parskip-0.1cm

\begin{center}
\section{Introduction}
\end{center}
Fix a time interval $[0, T]$.  The aim is to show existence and uniqueness of a strong solution for the stochastic differential equation (SDE)
 \begin{equation}
   \label{eq:1}
   X_z= x +\int_{[0, z]} b(\zeta, X_{\zeta}) d \zeta + B^{\alpha, \beta}_z + B^{\alpha^{\prime}, \beta^{\prime}}_z, \qquad  z\in [0, T]^{2},
 \end{equation}
where $x\in \mathbb{R}$,  $B^{\alpha, \beta}$ and $B^{\alpha^{\prime}, \beta^{\prime}}$ are two fractional Brownian sheets with different parameters $(\alpha, \beta), (\alpha^{\prime}, \beta^{\prime}) \in (0, \frac{1}{2}]^{2}$. Here we assume that both fractional Brownian sheets can be expressed in terms of the same standard Brownian sheet $\{W_z, \, \, z \in  [0, T]^{2}\}$ as:
\begin{align}\label{e:Rep-V}
B^{\alpha, \beta}_z =\int_{[0, z]} K_{\alpha, \beta}(z, \zeta) dW_{\zeta}
\qquad \mbox{and} \qquad B^{\alpha^{\prime}, \beta^{\prime}}_z =\int_{[0, z]} K_{\alpha^{\prime}, \beta^{\prime}}(z, \zeta) dW_{\zeta},
\end{align}
for suitable  square integrable kernels $K_{\alpha, \beta}$ and $K_{\alpha^{\prime}, \beta^{\prime}}$. We will impose that $b:[0,T]^2\times\mathbb{R}\to\mathbb{R}$ is a Borel measurable function with linear growth in $z$.\\

The theory of regularization by noise has attracted substantial attention in recent years. More recently, various aspects of regularization 
by irregular signals have been investigated in \cite{BO03,BS25, CG16,Le20, MN04, NO03}. It has been shown that the addition of a stochastic perturbation term to deterministic equations can restore well-posedness to deterministic equations that are otherwise ill-posed. In the case of Brownian motion, the study of such phenomenon goes back to \cite{zvonkin74} and \cite{V81}, and has been developed in \cite{NO02} in the fractional Brownian motion case, relying on Girsanov theorem.

Building on \cite{NO02} and the ideas therein, \cite{ENO03} later considered the two-parameter case and studied hyperbolic stochastic partial differential equations driven by a fractional Brownian sheet. 
In the same spirit,  \cite{NS22} study a closely related equation, essentially Eq. (\ref{eq:1}), in the case where the driving noise is given by the sum of two fractional Brownian motions  $B^H$ and $B^{H'}$, which admit the Volterra-type representations
\[B^H_t= \int_{0}^t K_1(t, s) dW_s \qquad {and}\quad B^{H'}_t= \int_{0}^t K_2(t, s) dW_s.\]
Here both fractional Brownian motions are constructed from the same Brownian motion $W$, and therefore are not independent.  Motivated by the results of \cite{NS22} and \cite{ENO03},  the  main objective of the present work is to establish  well-posedness of Eq. (\ref{eq:1}), that is to study the regularization effect for the hyperbolic partial differential equation 
\begin{eqnarray}\label{e:hpde}\left\{\begin{array}{l}
\dfrac{\partial^{2} u(s, t)}{\partial s \partial t} = b(s, t, u(s, t)),\quad (s, t)\in[0, T]^2,\\
\\
 u(s, 0)= u(0, t)=x,
\end{array}\right.
\end{eqnarray}
by additive noise given by  the sum of the two fractional Brownian sheets $B^{\alpha, \beta}$ and $B^{\alpha', \beta'}$ defined in (\ref{e:Rep-V}). We focus on the cases 
 \begin{eqnarray*}
\left\{\begin{array}{l}
0<\alpha < \alpha' \leq 1/2, \, \mbox{and}\, \, 0<\beta < \beta' \leq 1/2\\
\mbox{or}\\
0<\alpha' < \alpha \leq 1/2, \, \mbox{and}\,\,  0<\beta' < \beta \leq 1/2,
\end{array}\right.
\end{eqnarray*}
 and the main contributions of this paper are as follows:
\begin{itemize}
\item[(i)] If the function $b$ satisfies a linear growth condition,  then Eq. (\ref{eq:1}) has a unique weak solution.
\item[(ii)] If $b$ is nondecreasing with respect to the second argument and uniformly bounded, then Eq. (\ref{eq:1}) has a  strong unique solution.
\end{itemize}

The proof of our results relies on a suitable version of Girsanov's theorem adapted to the 
noise appearing in Eq.~(\ref{eq:1}), namely the sum of two correlated fractional Brownian 
sheets. The analysis is technically demanding, since it requires a careful treatment of 
two-parameter fractional integration and differentiation operators associated with the 
kernels in the Volterra representations (\ref{e:Rep-V}). In particular, one has to verify 
the applicability of Girsanov's theorem in this framework and establish appropriate 
integrability properties of the corresponding Radon--Nikodym density. These results are 
also of independent interest, as they can provide further information on the distributional 
properties of solutions.

The paper is organized as follows. In Section \ref{Pre} we first give some preliminaries on fractional calculus and fractional Brownian sheet. We then state a version of Girsanov's theorem for fractional Brownian sheet that is suitable for our purposes. In Section \ref{three}, we show the existence and uniqueness  of a weak solution. This part is indeed technically demanding since it requires  constructing the two processes $u$ and $v$, and carefully verifying  that Novikov's condition is satisfied. Section \ref{four} is devoted to the study of uniqueness in law and pathwise uniqueness. In Section \ref{five} we establish the strong existence and uniqueness of solutions. Several technical results are gathered in Section \ref{Sect:Tech}. Finally, in the appendix we derive  an estimate  of the difference of two parameter Riemann-Liouville fractional integrals, which plays an important role in our analysis.
\begin{center}
\section{Preliminaries}\label{Pre}
\end{center}
\subsection{Fractional calculus}
We review some basic definitions and results of the  two-parameter fractional operators. A detailed survey of classical fractional calculus can be found in \cite{samko93}.\\
For $f\in L^{1}([0, T]^{2})$ and $\alpha, \beta>0$, the left-sided fractional Riemann-Liouville integral of $f$ of order $\alpha, \beta$ on $(0, T)^2$ is given at almost all $(x, y)$ by
\[
I^{\alpha, \beta}_{0^{+}} f(x, y):=\dfrac{1}{\Gamma(\alpha)\Gamma(\beta)} \int_{0}^{x}\int_{0}^{y} (x-u)^{\alpha -1}(y-v)^{\beta -1}f(u, v)du dv,
\]
where $\Gamma$ denotes the Euler function. Notice that 
$I^{\alpha, \beta}_{0^{+}} f(x, y)=I^{\alpha}_{0^{+}}(I^{\beta}_{0^{+}} f(\cdot, y)) (x).  
$
We denote by $I^{\alpha, \beta}_{0^{+}}(L^{p})$ the image  of $L^{p}([0, T]^{2})$ by the operator $I^{\alpha, \beta}_{0^{+}}$. If $f \in I^{\alpha, \beta}_{0^{+}}(L^{p})$, the function $\varphi$ such that $f=I^{\alpha, \beta}_{0^{+}}(\varphi)$ is unique in $L^p$ and it agrees with the left-sided Riemann-Liouville derivative of $f$ of order $\alpha, \beta$ defined by
 \begin{equation}
   \label{eq:2}
   D^{\alpha, \beta}_{0^{+}} f(x, y):=\dfrac{1}{\Gamma(1- \alpha)\Gamma(1-\beta)} \dfrac{\partial}{\partial x \partial y} \int_{0}^{x}\int_{0}^{y} \dfrac{f(u, v)}{(x-u)^{\alpha}(y-v)^{\beta}}du dv.
 \end{equation}
 It has the following Weil representation:
  \begin{eqnarray}
   \label{fweil-rep}
    &&D^{\alpha, \beta}_{0^{+}} f(x, y)   \nonumber\\   \nonumber
     &= & \dfrac{1}{\Gamma(1- \alpha)\Gamma(1-\beta)} \left( \dfrac{f(x, y)}{x^{\alpha} y^{\beta}} +  \dfrac{\alpha}{y^{\beta}}  \int_{0}^{x} \dfrac{f(x, y) - f(u, y)}{(x-u)^{\alpha +1 }} du +  \dfrac{\beta}{x^{\alpha}}  \int_{0}^{y} \dfrac{f(x, y) - f(x, v)}{(y-v)^{\beta +1 }} dv \right.
   \\
   & &
    + \left. \alpha\beta \int_{0}^{x}\int_{0}^{y} \dfrac{f(x, y)-f(x, v)-f(u, y) +f(u, v)}{(x-u)^{\alpha +1} (y-v)^{\beta +1}} du dv\right)\mathbbm{1}_{(0, T)^{2}}(x, y),
\end{eqnarray}
where the convergence of the integrals at the singularity  $x=u$ or $y=v$ holds in $L^p$ sense. When $\min( \alpha p, \beta p)>1$, any function in  $I^{\alpha, \beta}_{0^{+}}(L^{p})$ is $(\alpha-\frac{1}{p})\wedge \left(\beta-\frac{1}{p} \right)$-H\"{o}lder continuous. On the other hand any H\"{o}lder continuous function of order $(\gamma, \delta)$ with $\gamma > \alpha$ and $ \delta > \beta$  has a fractional derivative of order $(\alpha, \beta )$. The following first composition formula holds
\[
I^{\alpha, \beta}_{0^{+}}(I^{\alpha^{\prime}, \beta^{\prime}}_{0^{+}} f)= I^{\alpha + \alpha^{\prime}, \beta + \beta^{\prime}}_{0^{+}} f.  
\]
By definition, for $f \in I^{\alpha, \beta}_{0^{+}}(L^p)$,
\[I^{\alpha, \beta}_{0^{+}} (D^{\alpha, \beta}_{0^{+}} f)=f,
\]
and for general $f \in L^1([0, T]^2)$ we have
\[D^{\alpha, \beta}_{0^{+}} (I^{\alpha, \beta}_{0^{+}} f)=f.
\]
If $f \in I^{\alpha + \alpha^{\prime}, \beta + \beta^{\prime}}_{0^{+}} (L^1)$, $\alpha, \beta>0$, $\alpha^{\prime}, \beta^{\prime}>0$, $\max(\alpha + \beta, \alpha^{\prime}+ \beta^{\prime}) \leq 1$, we have the second composition formula
\[
D^{\alpha, \beta}_{0^{+}}(D^{\alpha, \beta}_{0^{+}} f)= D^{\alpha + \alpha^{\prime}, \beta + \beta^{\prime}}_{0^{+}} f.
\]

\subsection{Fractional Brownian sheet}Consider a  fractional Brownian sheet (fBs) $B^{\alpha, \beta}=\{B^{\alpha, \beta}_z, z\in [0, T]^2\}$ with  Hurst parameters  $(\alpha, \beta) \in (0, 1/2)^2$. Recall that $B^{\alpha, \beta}$  is defined as  a zero-mean Gaussian process on a probability space $(\Omega, \mathcal{F}, \mathbb{P})$ starting from $0$ with the  covariance function $\mathcal{R}^{\alpha, \beta}$ given by 
\[\mathcal{R}^{\alpha, \beta}(z, z^{\prime}):= \mathbb{E}[B^{\alpha, \beta}_z B^{\alpha, \beta}_{z^{\prime}}]=\dfrac{1}{4} \{s^{2\alpha} + {s^{\prime}}^{2\alpha} -|s-s^{\prime}|^{2\alpha}\}\{t^{2\alpha} + {t^{\prime}}^{2\alpha} -|t-t^{\prime}|^{2\alpha}\},
\]
where $z=(s, t)$ and $z^\prime=(s^{\prime}, t^{\prime}) \in [0, T]^{2}$.  Denote by $\mathcal{H}^{\alpha, \beta}$ the canonical Hilbert space of the fBs $B^{\alpha, \beta}$. That is, $\mathcal{H}^{\alpha, \beta}$ is defined as the closure of the set $\mathcal{E}$ of all  finite combinations of indicator functions of the type $\left\{\mathbbm{1}_{[0, z]}, z \in [0, T]^2\right\}$ with respect to the inner product
\[\langle \mathbbm{1}_{[0,z]} , \mathbbm{1}_{[0,z^{\prime}]} \rangle  _{{\mathcal{H}^{\alpha, \beta}}}=\mathcal{R}^{\alpha, \beta}(z, z^{\prime})
,\]
for all $z=(s, t), z=(s^{\prime}, t^{\prime}) \in [0, T]^{2}$.  The application $\mathbbm{1}_{[0, z]}   \mapsto B^{\alpha, \beta}_z$ can be extended to an isometry, that we continue to denote $B^{\alpha, \beta}$, between $\mathcal{H}^{\alpha, \beta}$ and the Gaussian space associated with the process $B^{\alpha, \beta}$. Consider the linear operator $K^{*}_{\alpha, \beta}$ from  $\mathcal{H}^{\alpha, \beta}$ to $L^{2}([0, T]^2)$ defined, for any $\varphi \in  \mathcal{E}$, by
\[(K^{*}_{\alpha, \beta} \varphi)(z)= K^{\alpha, \beta}\left( (T, T), z\right)\varphi(z) + \int_{[z, (T, T)]}(\varphi(\zeta)-\varphi(z))\dfrac{\partial^{2}K^{\alpha, \beta}}{\partial \zeta^{2}}(\zeta, z)d\zeta,
\]
where $K^{\alpha, \beta}$ denotes the square integrable kernel given, for $z=(s, t)\preceq (s^{\prime}, t^{\prime})$\footnote{Here $z=(s, t)\preceq (s^{\prime}, t^{\prime})$ means that $s\leq s' $ and $t\leq t'$, and $[0, z]=[0, s]\times [0, t]. $}, by 
\begin{equation} \label{e: decompo1}
K^{\alpha, \beta}(z, z^{\prime})=K^{\alpha}(s, s^{\prime})K^{\beta}(t, t^{\prime}).
\end{equation}
The square integrable kernels $K^{\alpha}$ and $K^{\beta}$ are defined by  (see \cite{DU99})
\begin{eqnarray}\label{kernels}\left\{\begin{array}{l}
K^{\alpha}(s, s^{\prime})=\Gamma(\alpha + \frac{1}{2})^{-1}(s^{\prime}-s)^{\alpha -1/2}F(\alpha -\frac{1}{2}, \frac{1}{2}-\alpha, \alpha +\frac{1}{2}, 1-\frac{s^{\prime}}{s}),\\
\\
K^{\beta}(t, t^{\prime})=\Gamma(\alpha + \frac{1}{2})^{-1}(t^{\prime}-t)^{\beta -1/2}F(\beta -\frac{1}{2}, \frac{1}{2}-\beta, \beta +\frac{1}{2}, 1-\frac{t^{\prime}}{t}).
\end{array}\right.
\end{eqnarray}
Here $F(a, b, c,z)$ denotes the Gauss hypergeometric function. Notice that $(K^{*}_{\alpha, \beta} 1_{[0,z^{\prime}]})(z^{\prime})=K^{\alpha, \beta}(z, z^{\prime})$ and $K^{*}_{\alpha, \beta}$ provides an isometry between  $\mathcal{E}$ and $L^{2}([0, T]^{2})$ that can be extended  to $\mathcal{H}^{\alpha, \beta}$. Hence, the process $W^{\alpha, \beta}=\{W^{\alpha, \beta}_z , z\in [0, T]^2\}$ defined by  
\begin{equation}
   \label{eq:6}
W_z= B^{\alpha, \beta}\left((K^{\star}_{\alpha, \beta})^{-1} (\mathbbm{1}_{[0, z]}) \right),
\end{equation}
is a standard Brownian sheet and the process $B^{\alpha, \beta}$ has the integral representation 
\begin{equation}
   \label{eq:7} B^{\alpha, \beta}_z= \int_{[0, z]} K^{\alpha, \beta}(z, \zeta)dW_{\zeta}.
\end{equation}
From (\ref{eq:6}) and (\ref{eq:7}), we deduce that the $\sigma$-fields generated respectively by the random variables $\{B^{\alpha, \beta}_{\zeta}, \, \zeta\preceq z\}$ and $\{W_{\zeta}, \, \zeta \preceq z\}$ coincide.  That is, $\mathcal{F}^{B^{\alpha, \beta}}= \mathcal{F}^{W}$. This filtration satisfies (see \cite{cairoli75}):

\begin{enumerate}[label=(F\arabic*),ref=F\arabic*,labelsep=0.5em]
\label{cond:filtration}

\item \label{Ff1}
$\mathcal{F}^W_z$ is increasing with respect to the partial order on $[0,T]^2$.

\item \label{Ff2}
$\mathcal{F}^W_z$ is right-continuous, that is, for all $z \in [0,T)^2$,
\[
\bigcap_{n=1}^{\infty} 
\mathcal{F}^W_{z+\left(\frac{1}{n},\frac{1}{n}\right)}
= \mathcal{F}^W_z .
\]

\item \label{Ff3}
$\mathcal{F}^W_0$ contains all $\mathbb{P}$-null sets.

\item \label{Ff4}
For any $0\le s,t\le T$, the $\sigma$-fields
\[
\bigvee_{0\le u\le T}\mathcal{F}^W_{(u,t)}
\quad\text{and}\quad
\bigvee_{0\le v\le T}\mathcal{F}^W_{(s,v)}
\]
are conditionally independent given $\mathcal{F}^W_{(s,t)}$.

\end{enumerate}
\vspace{.3cm}

The operator $\mathcal{K}^{\alpha, \beta}$ associated with the kernel $K^{\alpha, \beta}$ is an isomorphism from  $L^{2}([0, T]^{2})$  into $I^{\alpha + \frac{1}{2}, \beta + \frac{1}{2}}_{0^{+}} (L^2)$ which can be expressed in terms of fractional integrals as follows (see \cite{DU99}):
 \begin{equation}
   \label{eq:3}
   (\mathcal{K}^{\alpha, \beta} h)(s, t)=   I^{2\alpha, 2\beta }_{0^{+}}\left( s^{\frac{1}{2}-\alpha}t^{\frac{1}{2}-\beta}I^{\frac{1}{2} - \alpha, \frac{1}{2}-\beta}_{0^{+}} s^{\alpha -\frac{1}{2}} t^{\beta -\frac{1}{2} }h  \right),
 \end{equation}
 for all $h \in L^2([0, T]^2)$ and  $(s, t) \in [0, T]^2$. Its inverse operator $(\mathcal{K}^{\alpha, \beta})^{-1}$ satisfies, for every $(s,t)\in [0, T]^{2}$, 
\begin{equation}
   \label{eq:4}
   (\mathcal{K}^{\alpha, \beta})^{-1} (h)(s, t)=  s^{\frac{1}{2}-\alpha}t^{\frac{1}{2}-\beta} D^{\frac{1}{2}-\alpha, \frac{1}{2}-\beta}_{0^{+}} s^{\alpha -\frac{1}{2}}t^{\beta -\frac{1}{2}}D^{2\alpha, 2\beta}_{0^{+}}h,
 \end{equation}
 for all $h \in  I^{\alpha + \frac{1}{2}, \beta + \frac{1}{2}}_{0^{+}} (L^2)$. If $h$ vanishes on the axes and is absolutely continuous with respect to the Lebesgue measure, then
  \begin{equation}
   \label{eq:5}
  (\mathcal{K}^{\alpha, \beta})^{-1} (h)(s, t)=  s^{\alpha-\frac{1}{2}}t^{\beta -\frac{1}{2}} I^{\frac{1}{2}-\alpha, \frac{1}{2}-\beta}_{0^{+}}\left( s^{\frac{1}{2} -\alpha}t^{\frac{1}{2}-\beta}\dfrac{\partial ^2 h}{\partial s \partial t}\right).
 \end{equation}
 Moreover, the covariance function $\mathcal{R}^{\alpha, \beta}$ can be written as
 \[\mathcal{R}^{\alpha, \beta}(z, z^{\prime})= \int_{[0, z\wedge z^{\prime}]} K^{\alpha, \beta}(z, \zeta) K^{\alpha, \beta}(z^{\prime}, \zeta)d \zeta,
 \]
where $[0, z\wedge z^{\prime}]:=[0, s\wedge s^{\prime}]\times[0, t\wedge t^{\prime}]$ for  $z=(s, t), z^{\prime}=(s^{\prime}, t^{\prime}) \in [0, T]^{2}$. 
\subsection{Girsanov theorems} We will use Girsanov transformation to prove the existence of a weak solution. We first state its version for the Brownian sheet.  

Given a process with integrable trajectories $u=\{u_z, \, z\in [0, T]^2\}$, we consider the transformation
\begin{eqnarray}\label{shiftedsheet}
   \tilde{B}^{\alpha, \beta}_z   &:= & {B}^{\alpha, \beta}_z -\int_{[0, z]} u_{\zeta} d \zeta = \int_{[0, z]}  K^{\alpha, \beta}(z, \zeta)dW_{\zeta} -\int_{[0, z]} u_{\zeta} d \zeta.
\end{eqnarray}
If $\int_{[0, \cdot]} u_{\zeta} d \zeta \in I^{\alpha + \frac{1}{2}, \beta + \frac{1}{2}}_{0^{+}} (L^2)$, then  we can write 
\[\tilde{B}^{\alpha, \beta}_z= \int_{[0, z]}  K^{\alpha, \beta}(z, \zeta) d \tilde{W}_{\zeta},
\]
where $\tilde{W}_z:= W_z -\int_{[0, z]}(\mathcal{K}^{\alpha, \beta})^{-1}  \left( \int_{[0, \cdot]} u_{\zeta} d \zeta \right)(\eta) d \eta$. Notice that $ \int_{[0, \cdot]} u_{\zeta} d \zeta \in I^{\alpha + \frac{1}{2}, \beta + \frac{1}{2}}_{0^{+}} (L^2) $ if and only if $(\mathcal{K}^{\alpha, \beta})^{-1} \left( \int_{[0, \cdot]} u_{\zeta} d \zeta \right) \in L^{2}([0, T]^2)$. As a consequence we have the following version of Girsanov theorem for the fractional Brownian sheet (see \cite{ENO03}):
\begin{theorem}\label{thm1} With the notations above, assume that $ \int_{[0, \cdot]} u_{\zeta} d \zeta \in I^{\alpha + \frac{1}{2}, \beta + \frac{1}{2}}_{0^{+}} (L^2) $ almost surely. Set $\psi_z:=(\mathcal{K}^{\alpha, \beta})^{-1}  \left( \int_{[0, \cdot]} u_{\zeta} d \zeta \right)(z)$ and  consider the shifted process  defined by (\ref{shiftedsheet}). \\
If \[L_T:= \exp \left(  \int_{[0, T]^2}\psi_zdW_z -\dfrac{1}{2}  \int_{[0, T]^2} \psi_z^2 dz \right)
\]
satisfies  $\mathbb{E}\big[L_T\big]=1$, then the process $\{\tilde{B}^{\alpha,\beta}_z, \, z\in [0, T]^2\}$ is an $\mathcal{F}$-fractional Brownian sheet with parameters $\alpha, \beta$ under the probability measure $\mathbb{Q}$ defined by $\left.\dfrac{d\mathbb{Q}}{d\mathbb{P}}\right|_{\mathcal{F}_{(T, T)}}= L_T.$
\end{theorem}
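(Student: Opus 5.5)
The plan is to reduce the statement to the classical Girsanov theorem for the standard Brownian sheet. That theorem says: if $\psi$ is adapted and square integrable with $\mathbb{E}[L_T]=1$, then $\tilde W_z = W_z - \int_{[0,z]}\psi_\zeta\,d\zeta$ is an $\mathcal{F}$-Brownian sheet under $\mathbb{Q}$ (Cairoli--Walsh). We then transfer this back to the fractional sheet through the Volterra representation (\ref{eq:7}) and the isomorphism $\mathcal{K}^{\alpha,\beta}$.

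\textbf{Step 1: the control $\psi$.} Since $\int_{[0,\cdot]}u_\zeta d\zeta \in I^{\alpha+\frac12,\beta+\frac12}_{0^+}(L^2)$ almost surely, (\ref{eq:4}) shows that $\psi=(\mathcal{K}^{\alpha,\beta})^{-1}(\int_{[0,\cdot]}u)$ lies in $L^2([0,T]^2)$ almost surely. We check that $\psi$ is $\mathcal{F}^W$-adapted. The Volterra structure gives this: the operator (\ref{eq:4}) is built from left-sided fractional integrals and derivatives with power weights, so $\psi(z)$ depends only on the values of $\int_{[0,\cdot]}u$ on $[0,z]$. Assuming $u$ is adapted, this makes $\psi$ adapted, and hence the stochastic integral $\int\psi\,dW$ in $L_T$ is well defined. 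The hypothesis $\mathbb{E}[L_T]=1$ then makes $\mathbb{Q}$ a probability measure. Applying the Brownian-sheet Girsanov theorem, $\tilde W$ is an $\mathcal{F}$-Brownian sheet under $\mathbb{Q}$.

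\textbf{Step 2: identification of $\tilde B$.} By definition of $\psi$ we have $\mathcal{K}^{\alpha,\beta}\psi = \int_{[0,\cdot]}u$, which written out is $\int_{[0,z]}K^{\alpha,\beta}(z,\eta)\psi_\eta\,d\eta = \int_{[0,z]}u_\zeta\,d\zeta$. Therefore
\[
\tilde B^{\alpha,\beta}_z = \int_{[0,z]}K^{\alpha,\beta}(z,\eta)\,dW_\eta - \int_{[0,z]}K^{\alpha,\beta}(z,\eta)\psi_\eta\,d\eta = \int_{[0,z]}K^{\alpha,\beta}(z,\eta)\,d\tilde W_\eta .
\]
The last equality is linearity of the Wiener integral against the deterministic kernel $K^{\alpha,\beta}(z,\cdot)\in L^2$. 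It holds $\mathbb{P}$-a.s., hence $\mathbb{Q}$-a.s. since $\mathbb{Q}\ll\mathbb{P}$.

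\textbf{Step 3: conclusion.} Under $\mathbb{Q}$, $\tilde B$ is a Wiener integral of the deterministic kernel $K^{\alpha,\beta}$ against a Brownian sheet. It is therefore a centered Gaussian process with covariance $\int_{[0,z\wedge z']}K^{\alpha,\beta}(z,\zeta)K^{\alpha,\beta}(z',\zeta)d\zeta = \mathcal{R}^{\alpha,\beta}(z,z')$, so it is a fractional Brownian sheet. It is $\mathcal{F}$-adapted, and its increments beyond $z$ are built from increments of $\tilde W$ beyond $z$. Those increments are independent of $\mathcal{F}_z$ under $\mathbb{Q}$, which gives the "$\mathcal{F}$-" qualifier.

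The main obstacle is the stochastic integral step in Step 2. The identity holds deterministically for the drift part. For the stochastic part, however, one must justify that the Wiener integral $\int K\,d\tilde W$ defined under $\mathbb{Q}$ agrees with $\int K\,dW - \int K\psi$. I would handle this by approximating $K^{\alpha,\beta}(z,\cdot)$ by step functions in $L^2$. For step functions the identity is trivial. The two limits then coincide in $\mathbb{Q}$-probability, because $L^2(\mathbb{P})$ convergence implies convergence in $\mathbb{P}$-probability and hence in $\mathbb{Q}$-probability, and the deterministic term converges by Cauchy--Schwarz since $\psi\in L^2$ almost surely.
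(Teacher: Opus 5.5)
Your proposal follows the paper's route. The paper, following Nualart--Ouknine and ENO03, notes that $\tilde B^{\alpha,\beta}_z=\int_{[0,z]}K^{\alpha,\beta}(z,\zeta)\,d\tilde W_\zeta$ with $\tilde W_z=W_z-\int_{[0,z]}\psi_\eta\,d\eta$, and then appeals to the Girsanov theorem for the Brownian sheet. Your Steps 1--2, including the step-function approximation that identifies the $\mathbb{Q}$-Wiener integral with $\int K^{\alpha,\beta}\,dW-\int K^{\alpha,\beta}\psi$, supply the details the paper leaves implicit. The adaptedness of $u$ that you assume is also implicit in the paper (cf.\ Theorem \ref{thm2}).

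The one thing to correct is the final justification in Step 3. It is not true that the increments of $\tilde B^{\alpha,\beta}$ beyond $z$ are built only from increments of $\tilde W$ beyond $z$. For $z\preceq z'$,
\[
\tilde B^{\alpha,\beta}_{z'}-\tilde B^{\alpha,\beta}_{z}=\int_{[0,z]}\bigl(K^{\alpha,\beta}(z',\zeta)-K^{\alpha,\beta}(z,\zeta)\bigr)\,d\tilde W_\zeta+\int_{[0,z']\setminus[0,z]}K^{\alpha,\beta}(z',\zeta)\,d\tilde W_\zeta .
\]
The first term is $\mathcal{F}_z$-measurable and in general a nondegenerate Gaussian variable, so the increment is not independent of $\mathcal{F}_z$. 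A fractional Brownian sheet does not have independent increments, so the qualifier cannot mean anything of that kind.

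As in the one-parameter setting of Nualart--Ouknine, and in the paper's definition of a weak solution, ``$\mathcal{F}$-fractional Brownian sheet'' means that the process has the Volterra representation (\ref{eq:7}) with respect to an $\mathcal{F}$-Brownian sheet. Your Steps 1 and 2 establish exactly this: $\tilde W$ is an $\mathcal{F}$-Brownian sheet under $\mathbb{Q}$, and $\tilde B^{\alpha,\beta}=\int K^{\alpha,\beta}\,d\tilde W$. So drop the independence claim and conclude directly from those two steps. The covariance computation then confirms that the law under $\mathbb{Q}$ is that of a fractional Brownian sheet with parameters $(\alpha,\beta)$.
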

We are now in  position to state the following version of Girsanov theorem whose proof is similar to that of Theorem \ref{thm1}.
\begin{theorem}\label{thm2}
Let $u=\{u_z, z\in [0, T]^2\}$ and $v=\{v_z, z\in [0, T]^2\}$ be two $\mathcal{F}$-adapted processes with integrable trajectories such that $\int_{[0, \cdot]} u_{\zeta} d \zeta \in I^{\alpha + \frac{1}{2}, \beta + \frac{1}{2}}_{0^{+}} (L^2)$ and $\int_{[0, \cdot]} v_{\zeta} d \zeta \in I^{\alpha^{\prime} + \frac{1}{2}, \beta^{\prime} + \frac{1}{2}}_{0^{+}} (L^2)$ almost surely. Assume that 
\[\psi_z:=(\mathcal{K}^{\alpha, \beta})^{-1} \left( \int_{[0, \cdot]} u_{\zeta} d \zeta \right)(z)= (\mathcal{K}^{\alpha^{\prime}, \beta^{\prime}})^{-1}  \left( \int_{[0, \cdot]} v_{\zeta} d \zeta \right)(z),
\]
for  every $z\in (0, T]^2$. If \[L_T:= \exp \left(  \int_{[0, T]^2}\psi_zdW_z -\dfrac{1}{2}  \int_{[0, T]^2} \psi_z^2 dz \right)
\]
satisfies  $\mathbb{E}\big[L_T\big]=1$, then,  under  the probability measure $\mathbb{Q}$ defined  by $\left.\dfrac{d\mathbb{Q}}{d\mathbb{P}}\right|_{\mathcal{F}_{(T, T)}}= L_T$,  the two processes 
\[
   \tilde{B}^{\alpha, \beta}_z := {B}^{\alpha, \beta}_z -\int_{[0, z]} u_{\zeta} d \zeta, \,\, \,   \mbox{and}\,\,\,  \tilde{B}^{\alpha^{\prime}, \beta^{\prime}}_z := {B}^{\alpha^{\prime}, \beta^{\prime}}_z -\int_{[0, z]} v_{\zeta} d \zeta, \, \, \,  \mbox{for}\, \, \, z\in [0, T]^2
\]
are  $\mathcal{F}$-fractional Brownian sheets with parameters $(\alpha, \beta)$ and $(\alpha^{\prime}, \beta^{\prime})$ respectively. Moreover, there exists a Brownian sheet $\tilde{W}$ with respect to  $\mathbb{Q}$  such that
\[ \tilde{B}^{\alpha, \beta}_z= \int_{[0, z]}  K^{\alpha, \beta}(z, \zeta)d\tilde{W}_{\zeta}\, \,\, \, \mbox{and}\,\, \, \, \tilde{B}^{\alpha^{\prime}, \beta^{\prime}}_z= \int_{[0, z]}  K^{\alpha^{\prime}, \beta^{\prime}}(z, \zeta)d\tilde{W}_{\zeta},
\]
for all $z\in [0, T]^2.$
\end{theorem}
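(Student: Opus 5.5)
The plan is to reduce the statement to Girsanov's theorem for the Brownian sheet, applied to a single shifted Brownian sheet. Set
\[
\tilde W_z := W_z - \int_{[0,z]} \psi_\zeta \, d\zeta, \qquad z\in[0,T]^2 .
\]
By hypothesis, $\psi=(\mathcal{K}^{\alpha,\beta})^{-1}(\int_{[0,\cdot]}u_\zeta d\zeta)$ belongs to $L^2([0,T]^2)$ almost surely. Since $u$ is $\mathcal{F}$-adapted and both $\mathcal{K}^{\alpha,\beta}$ and its inverse are Volterra-type operators (their value at $z$ depends only on values on $[0,z]$, by the representations (\ref{eq:3})--(\ref{eq:4})), $\psi$ is $\mathcal{F}$-adapted. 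The stochastic integral $\int\psi\,dW$ is therefore well defined, and $L$ is a nonnegative (local) martingale in the two-parameter sense. Because $\mathbb{E}[L_T]=1$, the measure $\mathbb{Q}$ is a probability measure. The classical Girsanov theorem for the Brownian sheet (Cairoli--Walsh), which uses the filtration properties (\ref{Ff1})--(\ref{Ff4}), then shows that $\tilde W$ is an $\mathcal{F}$-Brownian sheet under $\mathbb{Q}$. This single $\tilde W$ is the common Brownian sheet required in the statement.

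Next I would identify the two shifted sheets with Wiener integrals against $\tilde W$. For the first one,
\[
\int_{[0,z]} K^{\alpha,\beta}(z,\zeta)\,d\tilde W_\zeta = B^{\alpha,\beta}_z - \int_{[0,z]} K^{\alpha,\beta}(z,\zeta)\psi_\zeta\, d\zeta = B^{\alpha,\beta}_z - (\mathcal{K}^{\alpha,\beta}\psi)(z).
\]
Since $\mathcal{K}^{\alpha,\beta}$ is an isomorphism of $L^2$ onto $I^{\alpha+1/2,\beta+1/2}_{0^+}(L^2)$, and $\int_{[0,\cdot]}u$ lies in that range, we have $\mathcal{K}^{\alpha,\beta}\psi=\int_{[0,\cdot]}u_\zeta\,d\zeta$. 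The right-hand side is therefore exactly $\tilde B^{\alpha,\beta}_z$. Because $\psi$ also equals $(\mathcal{K}^{\alpha',\beta'})^{-1}(\int_{[0,\cdot]}v)$, the same computation with $K^{\alpha',\beta'}$ gives $\tilde B^{\alpha',\beta'}_z=\int_{[0,z]}K^{\alpha',\beta'}(z,\zeta)d\tilde W_\zeta$.

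It remains to verify the law. Under $\mathbb{Q}$, $\tilde W$ is a Brownian sheet, so by (\ref{eq:7}) the Wiener integrals $\int K^{\alpha,\beta}(z,\cdot)\,d\tilde W$ form a centered Gaussian process. Its covariance is $\int K^{\alpha,\beta}(z,\zeta)K^{\alpha,\beta}(z',\zeta)\,d\zeta=\mathcal{R}^{\alpha,\beta}(z,z')$, hence it is a fractional Brownian sheet with parameters $(\alpha,\beta)$, and likewise for $(\alpha',\beta')$. Adaptedness to $\mathcal{F}$ follows from the Volterra structure. The $\mathcal{F}$-fBs property also holds, in the sense that its increments are driven by $\tilde W$, which is an $\mathcal{F}$-Brownian sheet.

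The main obstacle is the step where the deterministic-kernel identity $\int K(z,\zeta)\,d\tilde W_\zeta=\int K\,dW-\int K\psi\,d\zeta$ is justified pathwise with the random integrand $\psi$. This requires an approximation of the kernel by elementary functions, or Fubini for stochastic integrals of the sheet, together with a check that the Cairoli--Walsh Girsanov theorem applies given only $\mathbb{E}[L_T]=1$ and $\psi\in L^2$ a.s. I would first use a localization with stopping lines (or truncation of $\psi$) and then pass to the limit.
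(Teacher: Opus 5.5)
Your proposal is correct and matches the paper's route: the paper only says the proof mirrors that of Theorem \ref{thm1}, which consists of setting $\tilde W_z = W_z - \int_{[0,z]}\psi_\zeta\,d\zeta$, applying Girsanov's theorem for the Brownian sheet, and using $\mathcal{K}^{\alpha,\beta}\psi=\int_{[0,\cdot]}u_\zeta\,d\zeta$ and $\mathcal{K}^{\alpha',\beta'}\psi=\int_{[0,\cdot]}v_\zeta\,d\zeta$ to write both shifted sheets as Wiener integrals against the same $\tilde W$, exactly as you do. The technical point you flag, namely the identity $\int K\,d\tilde W=\int K\,dW-\int K\psi\,d\zeta$ for non-elementary $K$, is routine: approximate $K$ by elementary kernels, note that $L^2(\mathbb{P})$ convergence of the Wiener integrals gives convergence in $\mathbb{Q}$-probability by absolute continuity, and note that the drift term converges pathwise since $\psi\in L^2$ a.s.
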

\begin{remark}Notice that for  $(\alpha, \beta)=(\frac{1}{2}, \frac{1}{2})$, we have $\psi_z=u_z$ for all $z\in [0, T]^2$.
\end{remark}
\begin{center}
\section{Existence of a weak solution}\label{three}
\end{center}
We will use Girsanov theorem as stated in Theorem \ref{thm2} to prove the existence of a weak solution for the  stochastic differential equation (\ref{eq:1}). The function $b: [0, T]^2 \times \mathbb{R}$ is a Borel function satisfying the linear growth condition:
\begin{equation}
\exists c \in (0, \infty) \text{ such that } 
|b(z, \xi)| \leq c(1 + |\xi|), 
\quad \text{for all } (z, \xi) \in [0, T]^2.
\tag{LG}\label{eq:LG}
\end{equation}
Let us first make precise the meaning of a weak solution to the SDE (\ref{eq:1}).
\begin{definition} A weak solution to Eq. (\ref{eq:1}) is a triple $(B^{\alpha, \beta},  {B}^{\alpha^{\prime}, \beta^{\prime}}, X)$ on a filtered probability space $(\Omega, \mathcal{F}=\{\mathcal{F}_z,\ \, z\in [0, T]^2\}, \mathbb{P})$, such that \eqref{Ff1}--\eqref{Ff4} and the following two conditions hold:
\begin{itemize}
\item[(i)] There exists an $\mathbb{F}$-Brownian sheet $W=\{ W_{\xi}, \xi \in [0, T]^2\}$ such that
 \begin{equation}
   \label{eq:01}
B^{\alpha, \beta}_z =\int_{[0, z]} K^{\alpha, \beta}(z, \zeta) dW_{\zeta}
\qquad \mbox{and} \qquad B^{\alpha^{\prime}, \beta^{\prime}}_z =\int_{[0, z]} K^{\alpha^{\prime}, \beta^{\prime}}(z, \zeta) dW_{\zeta},
\end{equation}
\item[(ii)] $B^{\alpha, \beta}$,  $B^{\alpha^{\prime}, \beta^{\prime}}$ satisfy the equation (\ref{eq:1}).
\end{itemize} 
\end{definition} 
The main result of this section is the following
\begin{theorem}{} Let $(\alpha, \beta)$ and $(\alpha', \beta')$ in $(0, 1/2]^{2}$.  We assume that  $(\alpha, \beta)\prec (\alpha^{\prime}, \beta^{\prime})$  or $(\alpha^{\prime}, \beta^{\prime}) \prec(\alpha, \beta) $ and that the Borel function $b$ satisfies the linear growth condition \eqref{eq:LG}. Then Eq. (\ref{eq:1}) has a weak solution.
\end{theorem}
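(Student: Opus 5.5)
We argue by Girsanov transformation, in the spirit of \cite{NO02}, \cite{ENO03} and \cite{NS22}. Fix a probability space carrying a Brownian sheet $W$ with filtration $\mathcal{F}^W$ satisfying \eqref{Ff1}--\eqref{Ff4}. Define $B^{\alpha,\beta}$ and $B^{\alpha',\beta'}$ through (\ref{e:Rep-V}) and set $X_z:=x+B^{\alpha,\beta}_z+B^{\alpha',\beta'}_z$. We will split the drift as $b(z,X_z)=u_z+v_z$ with two adapted processes $u,v$ satisfying the compatibility requirement of Theorem \ref{thm2}:
\[
(\mathcal{K}^{\alpha,\beta})^{-1}\Big(\int_{[0,\cdot]}u_\zeta d\zeta\Big)=(\mathcal{K}^{\alpha',\beta'})^{-1}\Big(\int_{[0,\cdot]}v_\zeta d\zeta\Big)=:\psi .
\]
Granting $\E[L_T]=1$, Theorem \ref{thm2} shows that under $\mathbb{Q}$ the processes $\tilde B^{\alpha,\beta}=B^{\alpha,\beta}-\int u$ and $\tilde B^{\alpha',\beta'}=B^{\alpha',\beta'}-\int v$ are fractional Brownian sheets built from a common Brownian sheet $\tilde W$. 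Since
\[
X_z=x+\int_{[0,z]}b(\zeta,X_\zeta)d\zeta+\tilde B^{\alpha,\beta}_z+\tilde B^{\alpha',\beta'}_z ,
\]
the triple $(\tilde B^{\alpha,\beta},\tilde B^{\alpha',\beta'},X)$ is then a weak solution.

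\textbf{Construction of $u,v$.} We treat the case $(\alpha,\beta)\prec(\alpha',\beta')$; the other case is symmetric. Write $h=\int_{[0,\cdot]}b(\zeta,X_\zeta)d\zeta$, so that $\partial^2 h/\partial s\partial t=b(\cdot,X_\cdot)$. We look for $\psi$ solving
\[
\mathcal{K}^{\alpha,\beta}\psi+\mathcal{K}^{\alpha',\beta'}\psi=h ,
\]
and then put $\int u=\mathcal{K}^{\alpha,\beta}\psi$ and $\int v=\mathcal{K}^{\alpha',\beta'}\psi$. Using (\ref{eq:3}), $\mathcal{K}^{\alpha',\beta'}$ equals $\mathcal{K}^{\alpha,\beta}$ composed with a smoothing operator of positive order $(\alpha'-\alpha,\beta'-\beta)$, given by weighted fractional integrals. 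Applying $(\mathcal{K}^{\alpha,\beta})^{-1}$ via (\ref{eq:5}) turns the equation into a Volterra equation of the second kind,
\[
\psi+A\psi=g,\qquad g:=(\mathcal{K}^{\alpha,\beta})^{-1}h=s^{\alpha-\frac12}t^{\beta-\frac12}I^{\frac12-\alpha,\frac12-\beta}_{0^+}\big(s^{\frac12-\alpha}t^{\frac12-\beta}b(\cdot,X_\cdot)\big),
\]
where $A=(\mathcal{K}^{\alpha,\beta})^{-1}\mathcal{K}^{\alpha',\beta'}$ is a two-parameter Volterra operator with weakly singular kernel of order $(\alpha'-\alpha,\beta'-\beta)>0$. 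Iterated powers $A^n$ have norms decaying like $C^n/\Gamma(n(\alpha'-\alpha))\Gamma(n(\beta'-\beta))$, so the Neumann series $\psi=\sum_n(-A)^ng$ converges in $L^2([0,T]^2)$ and yields a bound
\[
|\psi_z|\le C\,\sup_{\zeta\preceq z}|b(\zeta,X_\zeta)|\le C\Big(1+\sup_{\zeta\preceq z}|X_\zeta|\Big),
\]
with $\psi$ adapted because every operator involved is of Volterra type. To make this rigorous we must check that $g\in L^2$, using $|b|\le c(1+|X|)$ and that $X$ is continuous, together with the kernel estimates for $A$; the difference estimate for Riemann--Liouville integrals from the appendix is where such bounds would enter. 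It then follows that $\int u$ and $\int v$ lie in the required images $I^{\alpha+\frac12,\beta+\frac12}_{0^+}(L^2)$ and $I^{\alpha'+\frac12,\beta'+\frac12}_{0^+}(L^2)$.

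\textbf{Novikov condition.} From the pointwise bound,
\[
\int_{[0,T]^2}\psi_z^2dz\le C\Big(1+\sup_z|B^{\alpha,\beta}_z|^2+\sup_z|B^{\alpha',\beta'}_z|^2\Big).
\]
Fernique's theorem gives $\E\exp(\lambda\sup|B|^2)<\infty$ only for small $\lambda$, so we partition $[0,T]^2$ into small rectangles and apply Novikov locally, or equivalently use the standard argument $\E\exp(\frac12\int\psi^2\mathbbm 1_{R_i})<\infty$ on small pieces together with the multiplicative property of $L$ over the two-parameter filtration, as in \cite{ENO03} and \cite{NO02}. This gives $\E[L_T]=1$.

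The main obstacle is the solvability of the Volterra equation with a quantitative bound on $\psi$ that is linear in $\sup|X|$. This requires explicit control of the singular kernel of $(\mathcal{K}^{\alpha,\beta})^{-1}\mathcal{K}^{\alpha',\beta'}$, including the power weights $s^{\pm(\frac12-\alpha)}$, in two parameters. The localization needed for Novikov in the two-parameter setting is a secondary technical point.
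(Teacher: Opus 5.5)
\textbf{Verdict: same approach as the paper, but the decisive estimate is missing.}

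Your outline is the paper's proof, down to the operator. In the paper's notation ($a=\frac12-\alpha$, $a'=\frac12-\alpha'$, $b=\frac12-\beta$, $b'=\frac12-\beta'$) your operator is $A\psi=s^{-a}t^{-b}I^{a,b}_{0^+}\bigl(s^{a-a'}t^{b-b'}D^{a',b'}_{0^+}(s^{a'}t^{b'}\psi)\bigr)$. Put $Sw:=s^{-a}t^{-b}I^{a,b}_{0^+}(s^{a-a'}t^{b-b'}w)$ and let $T=D^{a',b'}_{0^+}s^{a'-a}t^{b'-b}I^{a,b}_{0^+}(s^{a-a'}t^{b-b'}\,\cdot\,)$ be the operator the paper inverts. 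Then $AS=ST$ and $g=S(s^{a'}t^{b'}b(\cdot,X))$. So your $(I+A)^{-1}g$ is the paper's $\psi$, and $|A^ng|$ is exactly the paper's $\mathcal{J}^n_{s,t}$.

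The gap is that the one nontrivial analytic step is asserted, not proved. You flag it yourself, but give no argument that $A=(\mathcal{K}^{\alpha,\beta})^{-1}\mathcal{K}^{\alpha',\beta'}$ is a Volterra operator with kernel dominated by $C(s-u)^{\alpha'-\alpha-1}(t-v)^{\beta'-\beta-1}$. That bound is what makes $\sum_n\|A^n\|<\infty$ and gives $\psi$ a bound linear in $\sup|b(\cdot,X)|$. It is not automatic: $A$ contains the fractional derivative $D^{a',b'}_{0^+}$ and power weights, and it is not a positive operator in general. Mere boundedness of $A$ on $L^2$ would not make the Neumann series converge. Without this step you have neither existence of $\psi$ nor the estimate that feeds Novikov. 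It is exactly what the paper spends Propositions~\ref{prop:f_n}, \ref{proposiiton:J_n} and Section~\ref{Sect:Tech} on, via the Weil representation and the inductive bounds $|A^ng|\le C^\star_n s^{a+n(a-a')}t^{b+n(b-b')}$ with $C^\star_n=O(B^n/(n!)^{\eta})$.

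Your route can be completed, arguably more cleanly than the paper's. Since the kernels factor, $A=A_1\otimes A_2$, with $A_1=s^{-a}I^{a}_{0^+}s^{a-a'}D^{a'}_{0^+}s^{a'}$ acting in $s$ and $A_2$ its analogue in $t$. $A_1$ is homogeneous: it maps $s^\lambda$ to $m(\lambda)s^{\lambda+\theta}$, where $\theta=a-a'=\alpha'-\alpha$ and $m(\lambda)=\frac{\Gamma(\lambda+1+a')\Gamma(\lambda+1+\theta)}{\Gamma(\lambda+1)\Gamma(\lambda+1+a+\theta)}$. Its kernel is therefore $s^{\theta-1}\phi(u/s)$, where $\phi$ is the function on $(0,1)$ with $\int_0^1x^\lambda\phi(x)\,dx=m(\lambda)$, namely $\phi(x)=\frac{x^{a'}(1-x)^{\theta-1}}{\Gamma(\theta)}\,{}_2F_1(a,-\theta;\theta;1-x)$. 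One checks that $|\phi(x)|\le C(1-x)^{\theta-1}$ on $(0,1)$. Hence $|Af|\le C\,I^{\alpha'-\alpha,\beta'-\beta}_{0^+}|f|$ pointwise, which gives factorial decay of $A^n$ in sup norm and the pointwise bound on $\psi$ you state.

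Three smaller points:
\begin{itemize}
\item Convergence of the series in $L^2$, as you wrote it, would not give a pointwise bound on $\psi$. For Novikov the $L^2$ bound would suffice anyway.
\item You must check that $\mathcal{K}^{\alpha',\beta'}\psi$ has the form $\int_{[0,\cdot]}v_\zeta\,d\zeta$, so that $u$ and $v$ exist as densities. This follows from $\psi=g-A\psi$, with $g$ a weighted $I^{a,b}_{0^+}$ of a bounded function. The paper sidesteps it by solving for $u$ rather than $\psi$.
\item Your localization for Novikov is right, and more careful than the paper's direct appeal to Fernique. It should be done with strips rather than rectangles: $r\mapsto\int_{[0,T]\times[0,r]}\psi\,dW$ is a continuous $(\mathcal{F}_{(T,r)})_r$-martingale with bracket $\int_{[0,T]\times[0,r]}\psi^2$, so the one-parameter small-interval version of Novikov's criterion applies.
\end{itemize}
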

\begin{Proof} Let $W=\{ W_{\xi}, \xi \in [0, T]^2\}$ be a Brownian sheet on a probability space $(\Omega, \mathcal{F}, \mathbb{P})$. We denote by $\mathbb{F}= \{\mathcal{F}_z,\ \, z\in [0, T]^2\}$ the natural filtration generated by  $W$ and define the fractional Brownian sheets
\[ B^{\alpha, \beta}_z =\int_{[0, z]} K^{\alpha, \beta}(z, \zeta) dW_{\zeta},
\,\,\,\mbox{and}\,\,\,  B^{\alpha^{\prime}, \beta^{\prime}}_z =\int_{[0, z]} K^{\alpha^{\prime}, \beta^{\prime}}(z, \zeta) dW_{\zeta},\,\, \mbox{for all}\,\, \, z\in[0, T]^2.
\]
Let us first assume that we can find two $\mathbb{F}$-adapted processes $u=\{u_z,\, \, z\in[0, T]^2\}$ and $v=\{v_z,\, \, z\in[0, T]^2\}$ with integrable paths such that the two following conditions are satisfied:
\begin{itemize}
\item[(j)]  For all  $z\in[0, T]^2$, we have 
\[u_z +v_z = b(z, x +B_z^{\alpha, \beta} +B_z^{\alpha^{\prime}, \beta^{\prime}})\]
\item[(jj)]  For all  $z\in[0, T]^2$, we have 
\[(\mathcal{K}^{\alpha, \beta})^{-1}  \left( \int_{[0, \cdot]} u_{\zeta} d \zeta \right)(z)=(\mathcal{K}^{\alpha, \beta})^{-1}  \left( \int_{[0, \cdot]} v_{\zeta} d \zeta \right)(z):=\psi_z,
\]
and $\{\psi_z, \, z\in [0, T]^2\} $ satisfies the Novikov condition $\mathbb{E}\big[ \exp (\frac{1}{2}\int_{[0, T]^2} |\psi_z|^2 dz)\big]<\infty$.
\end{itemize}
Then, the process  $X$ defined by
$X_z:=x + B^{\alpha, \beta}_z +B^{\alpha^{\prime}, \beta^{\prime}}_z,$ for all $z\in [0, T]^2$, obeys  the SDE
$$X_z= x + \tilde{B}^{\alpha, \beta}_z +\tilde{B}^{\alpha^{\prime}, \beta^{\prime}}_z + \int_{[0, T]^2} b(z, X_{\zeta})d \zeta
,$$
where $\tilde{B}^{\alpha, \beta}$ and $\tilde{B}^{\alpha^{\prime}, \beta^{\prime}}$ are respectively given by
\[
   \tilde{B}^{\alpha, \beta}_z := {B}^{\alpha, \beta}_z -\int_{[0, z]} u_{\zeta} d \zeta, \,\, \,   \mbox{and}\,\,\,  \tilde{B}^{\alpha^{\prime}, \beta^{\prime}}_z := {B}^{\alpha^{\prime}, \beta^{\prime}}_z -\int_{[0, z]} v_{\zeta} d \zeta, \, \, \,  \mbox{for all }\, \, \, z\in [0, T]^2.
\]
Moreover, since  the processes $u$ and $v$ satisfy the conditions of Theorem \ref{thm1}, then under the probability measure $\mathbb{Q}$ defined by 
\[
\left.\dfrac{d\mathbb{Q}}{d\mathbb{P}}\right|_{\mathcal{F}_{(T, T)}}=\exp \left(  \int_{[0, T]^2}\psi_zdW_z -\dfrac{1}{2}  \int_{[0, T]^2} \psi_z^2 dz \right),\]
the processes $\tilde{B}^{\alpha, \beta}$ and $\tilde{B}^{\alpha^{\prime}, \beta^{\prime}}$  are two $\mathbb{F}$-fractional Brownian sheets with the representations (\ref{eq:01}). Therefore, the triple $(\tilde{B}^{\alpha, \beta}, \tilde{B}^{\alpha^{\prime}, \beta^{\prime}}, X)$ is a weak solution to Eq. (\ref{eq:1}) under the probability measure $\mathbb{Q}$.  To complete the proof, it remains to construct the processes $u$ and $v$ satisfying the conditions $(j)$ and $(jj)$ stated above. This construction is carried out in Subsection \ref{cons:u and v}. 
\end{Proof}

\subsection{Existence of the processes $u$ and $v$.}\label{existence of u and v}\label{cons:u and v}
\indent

 We will show how to construct, for all $(\alpha, \beta)$ and $(\alpha^{\prime}, \beta^{\prime})\in (0, \frac{1}{2}]^2$, the processes $u$ and $v$ satisfying the above conditions $(j)$ and $(jj)$. We will treat the case $(\alpha^{\prime}, \beta^{\prime})\prec (\alpha, \beta)\preceq (\frac{1}{2}, \frac{1}{2})$; the remaining case   where $(\alpha, \beta)\prec (\alpha', \beta')\preceq (\frac{1}{2}, \frac{1}{2})$  follows by symmetry.
\begin{itemize}
\item[(a)]case $(\alpha, \beta)=(\frac{1}{2}, \frac{1}{2})$ and $(\alpha^{\prime}, \beta^{\prime})\prec (\frac{1}{2}, \frac{1}{2})$:\\
In this case, notice that $B^{\alpha, \beta} =W$ is a Brownian sheet and $\psi= u$. Set $a=\frac{1}{2} -\alpha^{\prime}$, $b=\frac{1}{2}- \beta^{\prime}$ and let $\{v_z, \, z \in [0, T]^2\}$ be the process defined by 
 \begin{eqnarray}
   \label{case:11}
v_{s, t}      &:=& b(s, t, x + W_{s, t} + B_{s, t}^{\alpha^{\prime}, \beta^{\prime}}) 
   \nonumber \\
   & &
    +s^{-a}t^{-b}\sum_{n=1}^{\infty} \dfrac{(-1)^{n}}{\Gamma(na)\Gamma(nb)}\int_{0}^{s}\int_{0}^{t} (s-u)^{na-1}(t-v)^{nb-1}u^{a}v^{b} b_{u, v}du dv \nonumber \\
    &=& s^{-a}t^{-b}\sum_{n=1}^{\infty} (-1)^{n}I^{{na, nb}}_{0^{+}}(s^{a}t^{b} b_{s, t})= s^{-a}t^{-b} (I + I^{{a, b}}_{0^{+}})^{-1} (s^{a}t^{b} b_{s, t}) \nonumber
    \end{eqnarray}
    and let $\{\psi_z, \, z \in [0, T]^2\}$ be the process given by 
    \begin{eqnarray}
   \label{case:12}
\psi_{s, t}      &:=& s^{-a}t^{-b}I^{{a, b}}_{0^{+}}(s^{a}t^{b}v_{s, t})= s^{-a}t^{-b} \dfrac{1}{\Gamma(a)\Gamma(b)} \int_{0}^{s}\int_{0}^{t} (s-x)^{a-1}(t-y)^{b-1}x^{a}y^{b}v_{x, y} dx dy,
   \nonumber
    \end{eqnarray}
    for every  $z=(s, t) \in (0, T]^{2}$. We have used indifferently the notations $f_z= f_{s, t}$ to design $f(z)$ for any function $f$ and $b_{s, t} $ to denote $b_{s, t}:= b(s, t, x + W_{s, t} + B_{s, t}^{\alpha^{\prime}, \beta^{\prime}})$. Now it is clear that $\psi_{s, t} + v_{s, t}= b(s, t, x + W_{s, t} + B_{s, t}^{\alpha^{\prime}, \beta^{\prime}})$ and the  equality in $(jj)$ is satisfied by construction. Furthermore, Novikov's condition is fulfilled since by the linear growth condition \eqref{eq:LG},  we have 
    $$|v_{s, t}|\leq \sup_{0\leq s, t \leq T}|b_{s, t}| + \left(\sup_{0\leq s, t \leq T}|b_{s, t}|\right) \sum_{n=1}^{\infty} \dfrac{s^{na}t^{nb}}{\Gamma(na)\Gamma(nb)},$$ and consequently
    $|\psi_{s, t}|\leq c(1+ ||B^{\alpha, \beta} +B^{\alpha^{\prime}, \beta^{\prime}} ||_{\infty})$. Then  the proof of Novikov's criterion follows the same arguments as those presented in \cite{NO02} (page $109$) and \cite{ENO03} (page $128$), and which is an immediate  consequence of the exponential integrability of the seminorm of a Gaussian process, see \cite{LT91}.
    \item[(b)] case $(\alpha, \beta)\prec (\alpha^{\prime}, \beta^{\prime})$ and $(\alpha^{\prime},  \beta^{\prime}) \in (0, \frac{1}{2})^2$:\\
   Set $a=\frac{1}{2} -\alpha$, $b=\frac{1}{2} -\beta$, $a^{\prime}=\frac{1}{2} -\alpha^{\prime}$ and $b^{\prime}=\frac{1}{2} -\beta^{\prime}$. In this case, we define the processes $u$ and $v$ by
   \begin{eqnarray}
   \label{case:21}
u_{s, t}      &:=& b(s, t, x + B_{s, t}^{\alpha, \beta} + B_{s, t}^{\alpha^{\prime}, \beta^{\prime}}) 
   \nonumber \\
   & &
    + s^{-a^{\prime}}t^{-b^{\prime}}\sum_{n=1}^{\infty} (-1)^{n}  \left(D^{a^{\prime}, b^{\prime}}_{0^{+}}  s^{a^{\prime}-a}t^{b^{\prime}-b} I^{a, b}_{0^{+}}(s^{a-a^{\prime}}t^{b-b^{\prime}} \cdot)\right)^{n}(s^{a^{\prime}}t^{b^{\prime}}b_{s, t})\nonumber \\
    &=& s^{-a^{\prime}}t^{-b^{\prime}}\sum_{n=0}^{\infty} (-1)^{n}  \left(D^{a^{\prime}, b^{\prime}}_{0^{+}}  s^{a^{\prime}-a}t^{b^{\prime}-b} I^{a, b}_{0^{+}}(s^{a-a^{\prime}}t^{b-b^{\prime}} \cdot)\right)^{n}(s^{a^{\prime}}t^{b^{\prime}}b_{s, t})\\
    &=& s^{-a^{\prime}}t^{-b^{\prime}} \left[I +D^{a^{\prime}, b^{\prime}}_{0^{+}} s^{a^{\prime}-a}t^{b^{\prime}-b} I^{a, b}_{0^{+}}(s^{a-a^{\prime}}t^{b-b^{\prime}} \cdot) \right]^{-1}(s^{a^{\prime}}t^{b^{\prime}}b_{s, t}) \nonumber
    \end{eqnarray}
    and 
     \begin{eqnarray}
   \label{case:22}
v_{s, t}      &:=& s^{-a^{\prime}}t^{-b^{\prime}} D^{a^{\prime}, b^{\prime}}_{0^{+}} \left( s^{a^{\prime}-a}t^{b^{\prime}-b} I^{a, b}_{0^{+}} (s^{a}t^{b} u_{s, t})\right)
   \nonumber
    \end{eqnarray}
    for every $z=(s, t) \in (0, T]^2$, with the notation $b_{s, t}$ standing  for  $ b(s, t, x + B_{s, t}^{\alpha, \beta} + B_{s, t}^{\alpha^{\prime}, \beta^{\prime}}) $. By construction of $u$ and $v$, we have $u_z +v_z=b(z, x + B_{z}^{\alpha, \beta} + B_z^{\alpha^{\prime}, \beta^{\prime}})$ and 
 \begin{eqnarray}
   \label{novikov:1}
   (\mathcal{K}^{\alpha, \beta})^{-1}  \left( \int_{[0, \cdot]} u_{\zeta} d \zeta \right)(z)=(\mathcal{K}^{\alpha^{\prime}, \beta^{\prime}})^{-1}   \left( \int_{[0, \cdot]} v_{\zeta} d \zeta \right)(z):=\psi_z,
\end{eqnarray}
for every $z\in (0, T]^2$. \\
\noindent  It remains to check Novikov's condition. Using (\ref{eq:5}), (\ref{case:21}) and (\ref{novikov:1}), we have
\begin{eqnarray*}
\psi_{s, t}&=& s^{-a}t^{-b} I^{a, b}_{0^{+}}(s^{a}t^{b} u_{s, t}) \\
&=&s^{-a}t^{-b} I^{a, b}_{0^{+}}(s^{a}t^{b}b_{s, t}) \\
&+& s^{-a}t^{-b}I^{a, b}_{0^{+}} \left( s^{a-a^{\prime}}t^{b-b^{}\prime}\sum_{n=1}^{\infty} (-1)^{n}  \left(D^{a^{\prime}, b^{\prime}}_{0^{+}}  s^{a^{\prime}-a}t^{b^{\prime}-b} I^{a, b}_{0^{+}}(s^{a-a^{\prime}}t^{b-b^{\prime}} \cdot)\right)^{n}(s^{a^{\prime}}t^{b^{\prime}}b_{s, t})\right).
\end{eqnarray*}
This implies
\begin{eqnarray} \label{e:N} |\psi_{s, t}| \leq |s^{-a}t^{-b}I^{a, b}_{0^{+}}(s^{a}t^{b}b_{s, t})| +\mathcal{J}_{s, t},
\end{eqnarray}
where
\[ \mathcal{J}_{s, t}:= \left| s^{-a}t^{-b}I^{a, b}_{0^{+}} \left( s^{a-a^{\prime}}t^{b-b^{\prime}}\sum_{n=1}^{\infty} (-1)^{n}  \left(D^{a^{\prime}, b^{\prime}}_{0^{+}}  s^{a^{\prime}-a}t^{b^{\prime}-b} I^{a, b}_{0^{+}}(s^{a-a^{\prime}}t^{b-b^{\prime}} \cdot)\right)^{n}(s^{a^{\prime}}t^{b^{\prime}}b_{s, t})\right)\right|.
\]
Let us first estimate the term $|s^{-a}t^{-b}I^{a, b}_{0^{+}}(s^{a}t^{b}b_{s, t})|$. Using the assumption  \eqref{eq:LG}, we have
\begin{eqnarray}\label{e:first}
|s^{-a}t^{-b}I^{a, b}_{0^{+}}(s^{a}t^{b}b_{s, t})| &\leq & || b||_{\infty}s^{-a} t^{-b} I^{a}_{0^{+}}(s^{a}) I^{ b}_{0^{+}}(t^{b}) \nonumber\\
&= &  \dfrac{|| b||_{\infty}}{4}\dfrac{\Gamma(a)\Gamma(b)}{\Gamma(2a)\Gamma(2b)}s^{a}t^{b}.
\end{eqnarray}
Next we estimate the second term $\mathcal{J}_{s, t}$. By the linearity of the fractional integral and the triangle inequality we obtain
\begin{eqnarray}\label{eq:0}
\mathcal{J}_{s, t} & \leq & \sum_{n=1}^{\infty} \left| s^{-a}t^{-b}I^{a, b}_{0^{+}}s^{a-a^{\prime}}t^{b-b^{\prime}} \left(D^{a^{\prime}, b^{\prime}}_{0^{+}}  s^{a^{\prime}-a}t^{b^{\prime}-b} I^{a, b}_{0^{+}}(s^{a-a^{\prime}}t^{b-b^{\prime}} \cdot)\right)^{n}(s^{a^{\prime}}t^{b^{\prime}}b_{s, t})\right|.
\end{eqnarray}

Define $f_n(s, t)$, $G^n(s, t)$ and $\mathcal{J}^{n}_{s, t}$ respectively by

$$f^{n} (s,t):=  s^{a - a'} t^{b - b'}   \big( D_{0+}^{a', b'} \, s^{a' - a} t^{b' - b} \,
I_{0+}^{a,b} ( s^{a - a'} t^{b - b'} \cdot ) \big)^{n}\big( s^{a'} t^{b'} b_{s,t} \big),$$
$$G^n(s, t):=I_{0+}^{a,b} (f_n(s, t)\,\,\, \,\,\, \, \mbox{and}\,\, \,\,\, \, \mathcal{J}^{n}_{s, t}:= s^{- a} t^{- b} | G^n(s, t)|.$$
Observe that 
 \begin{eqnarray}
   \label{Recurence:G_n}
   G^{n+1}(s, t)=  I_{0+}^{a,b} ( s^{a - a'} t^{b - b'}  \big( D_{0+}^{a', b'} \, s^{a' - a} t^{b' - b}  G^{n}(s, t) \big).
\end{eqnarray}
So (\ref{eq:0}) becomes
\begin{eqnarray}\label{eq:0}
\mathcal{J}_{s, t} \leq \sum_{n=1}^{\infty} \mathcal{J}^{n}_{s, t}.
\end{eqnarray}
In order to estimate  $\mathcal{J}^{n}_{s, t}$ we will make use of  Proposition \ref{prop:f_n} below. For simplicity, we start by fixing some notations. We define, for any $n \in \mathbb{N}$,  $$ \alpha_n:= (n+1)a -na' +1,\quad  \beta_n= (n+1)b -nb' +1, $$
$$\kappa_n:= \dfrac{\Gamma(\alpha_n)}{\Gamma(\alpha_n +a )}\dfrac{\Gamma(\beta_n)}{\Gamma(\beta_n +b )}, \qquad \tilde{\kappa}_n:= \dfrac{\Gamma(\alpha_n)}{\Gamma(\alpha_n +a-a' )}\dfrac{\Gamma(\beta_n)}{\Gamma(\beta_n +b-b' )} ,$$
\[d^{(n)}_1:=\int_{0}^{1}\dfrac{1-u^{a'-a}}{(1-u)^{a'+1}}u^{\alpha_n+a-1}du \quad\mbox{and}\quad d^{(n)}_2:=\int_{0}^{1}\dfrac{1-u^{b'-b}}{(1-u)^{b'+1}}u^{\beta_n+b-1}du . \]
\begin{prop}\label{prop:f_n} For all $s, t \in [0, T]$, and  $n\in \mathbb{N}$, we have
\begin{align} \label{est: f_n}
|f_n(s, t)| \leq C_n s^{(n+1)a-na'}t^{(n+1)b-nb'},
\end{align}
where the sequence $(C_n)$ is given by
 \begin{eqnarray}
\left\{\begin{array}{l}
C_0 =|| b||_{\infty},\notag \\
\\
C_{n+1}:=  \left\{ (1+c_3d^{(n)}_1d^{(n)}_2)\tilde{\kappa}_n + (c_1d^{(n)}_1 +c_2d^{(n)}_2) \kappa_n +\dfrac{c_3(d_5d^{(n)}_1 +d_6d^{(n)}_2)}{ab\Gamma(a)\Gamma(b) }\right\}C_n=: r_n C_n,
\end{array}\right.
\end{eqnarray}
with \[c_1:=\dfrac{a'}{\Gamma(1-a')\Gamma(1-b')}, c_2:=\dfrac{b'}{\Gamma(1-a')\Gamma(1-b')},  \, c_3:=\dfrac{a'b'}{\Gamma(1-a')\Gamma(1-b')} ,\]
\[d_5:= \int_{0}^{1} \dfrac{|1-v|^{b}+|t^{b}-v^{b}|}{(1-v)^{b'+1}}dv,\,\, \mbox{and}\, \,\,d_6:= \int_{0}^{1} \dfrac{|1-u|^{a}+|1-u^{a}|}{(1-u)^{a'+1}}du. \]
\end{prop}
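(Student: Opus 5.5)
Induction on $n$ is the natural route. Set $\gamma_n:=(n+1)a-na'$ and $\delta_n:=(n+1)b-nb'$, so that $\alpha_n=\gamma_n+1$ and $\beta_n=\delta_n+1$. For the base case, $f_0(s,t)=s^{a-a'}t^{b-b'}s^{a'}t^{b'}b_{s,t}=s^{a}t^{b}b_{s,t}$, hence $|f_0(s,t)|\le \|b\|_\infty s^{a}t^{b}=C_0 s^{\gamma_0}t^{\delta_0}$. (Here I read $\|b\|_\infty$ as in \eqref{e:first}, i.e.\ as a pathwise bound on $b_{s,t}$.)

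For the inductive step, the recursion underlying \eqref{Recurence:G_n} gives
\[
f_{n+1}=s^{a-a'}t^{b-b'}D^{a',b'}_{0^+}\big(g_n\big),\qquad g_n(s,t):=s^{a'-a}t^{b'-b}I^{a,b}_{0^+}f_n(s,t).
\]
The first step is to bound $G^n=I^{a,b}_{0^+}f_n$ pointwise from the hypothesis $|f_n|\le C_n s^{\gamma_n}t^{\delta_n}$. The Beta integral gives
\[
|G^n(s,t)|\le C_n\frac{\Gamma(\alpha_n)\Gamma(\beta_n)}{\Gamma(\alpha_n+a)\Gamma(\beta_n+b)}s^{\gamma_n+a}t^{\delta_n+b}=C_n\kappa_n s^{\gamma_n+a}t^{\delta_n+b},
\]
and therefore $|g_n|\le C_n\kappa_n s^{\gamma_n+a'}t^{\delta_n+b'}$. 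Next I apply the Weil representation \eqref{fweil-rep} with orders $(a',b')$ to $g_n$. This produces four terms: the diagonal term $g_n(s,t)s^{-a'}t^{-b'}$, the two one-dimensional difference integrals, and the rectangular-increment integral. After multiplication by $s^{a-a'}t^{b-b'}$, each of them should be bounded by a constant times $C_n s^{\gamma_{n+1}}t^{\delta_{n+1}}$, since $\gamma_{n+1}=\gamma_n+a-a'$.

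The main obstacle is the difference terms. A crude sup bound on $|g_n(s,t)-g_n(u,t)|$ does not make $\int_0^s (s-u)^{-a'-1}(\cdots)\,du$ converge at $u=s$, so I will split the increment as
\[
g_n(s,t)-g_n(u,t)=\big(s^{a'-a}-u^{a'-a}\big)t^{b'-b}G^n(s,t)+u^{a'-a}t^{b'-b}\big(G^n(s,t)-G^n(u,t)\big).
\]
For the first piece, substituting $u=s\theta$ and using the bound on $G^n$ yields exactly $d^{(n)}_1$ times a multiple of $\kappa_n C_n$. I expect that $1-\theta^{a'-a}$ has to be handled with the sign convention hidden in $d^{(n)}_1$; this is where the constants $c_1,c_2$ enter.

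The second piece requires a H\"older-type increment estimate for the Riemann--Liouville integral $G^n=I^{a,b}_{0^+}f_n$. This is presumably the appendix estimate for differences of two-parameter Riemann--Liouville integrals, which I would invoke to get $|G^n(s,t)-G^n(u,t)|\lesssim C_n(s-u)^{a}t^{\delta_n+b}\cdot(\text{power of }s)$. The integrability at $u=s$ then holds because $a>a'$, which follows from $\alpha<\alpha'$. Integrating produces the constants $d_5,d_6$ divided by $ab\Gamma(a)\Gamma(b)$.

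The rectangular term is treated the same way, splitting the increment in both variables. This gives the cross product $c_3d^{(n)}_1d^{(n)}_2$ together with the mixed $c_3(d_5d^{(n)}_1+d_6d^{(n)}_2)$ contributions. The diagonal term, together with the exact evaluation of $D^{a',b'}$ on pure powers, explains the $\tilde\kappa_n$ factor: $D^{a',b'}$ applied to $s^{\gamma_n+a'}t^{\delta_n+b'}$ yields $\frac{\Gamma(\alpha_n+a')}{\Gamma(\alpha_n)}\cdots$, and these Gamma ratios recombine with $\kappa_n$ into $\tilde\kappa_n$.

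Collecting the terms gives $|f_{n+1}|\le r_nC_n s^{\gamma_{n+1}}t^{\delta_{n+1}}$, which closes the induction. The genuinely delicate part is keeping the powers of $s,t$ exact in the increment estimates so that every term has the same homogeneity $s^{\gamma_{n+1}}t^{\delta_{n+1}}$. The scaling substitution $u=s\theta$, $v=t\eta$ is what guarantees this homogeneity.
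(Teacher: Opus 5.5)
There is a genuine gap in how you split the increments of $g_n=\phi\,G^n$, with $\phi:=s^{a'-a}t^{b'-b}$. This choice is exactly what determines the constants $r_n$.

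You write $g_n(s,t)-g_n(u,t)=(\phi(s,t)-\phi(u,t))G^n(s,t)+\phi(u,t)(G^n(s,t)-G^n(u,t))$ and expect $\tilde\kappa_n$ to come from the diagonal term. That cannot work. The diagonal term only gives $|g_n(s,t)|s^{-a'}t^{-b'}\le C_n\kappa_n s^{\gamma_n}t^{\delta_n}$. ``Evaluating $D^{a',b'}$ exactly on the majorant'' is not an estimate, because fractional derivatives are not monotone: $|g|\le h$ gives no bound of $D^{a',b'}g$ by $D^{a',b'}h$. Even formally, the $s$-factor you would get is $\Gamma(\alpha_n+a')/\Gamma(\alpha_n+a)$, not $\Gamma(\alpha_n)/\Gamma(\alpha_n+a-a')$.

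The paper uses the opposite (Leibniz) split, $\phi(s,t)\bigl(G^n(s,t)-G^n(u,t)\bigr)+\bigl(\phi(s,t)-\phi(u,t)\bigr)G^n(u,t)$, and analogously \eqref{e:doub-integ} for the rectangle. The diagonal term and all the $\phi(s,t)\times$(increments of $G^n$) terms then recombine into $\phi\,D^{a',b'}_{0^+}G^n=\phi\,I^{a-a',b-b'}_{0^+}f_n$. This uses the composition rule $D^{a',b'}_{0^+}I^{a,b}_{0^+}=I^{a-a',b-b'}_{0^+}$, valid since $a>a'$ and $b>b'$. Hence $f^1_{n+1}=I^{a-a',b-b'}_{0^+}f_n$, and positivity of this kernel gives $|f^1_{n+1}|\le C_n\tilde\kappa_n s^{\gamma_{n+1}}t^{\delta_{n+1}}$, with no H\"older estimate on $G^n$ needed.

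With the paper's split, every remaining term carries an increment of $\phi$, and $G^n$ is evaluated at the moving point. The one-variable term is $\int_0^s\frac{s^{a'-a}-u^{a'-a}}{(s-u)^{a'+1}}G^n(u,t)\,du$. The weight $u^{\gamma_n+a}$ from $|G^n(u,t)|\le C_n\kappa_n u^{\gamma_n+a}t^{\delta_n+b}$ becomes, after $u=s\theta$, the factor $\theta^{\alpha_n+a-1}$ in $d^{(n)}_1$. In your first piece $G^n$ sits at $(s,t)$, so you get $\kappa_n$ times an $n$-independent integral instead of $\kappa_n d^{(n)}_1$. Likewise, in \eqref{e:doub-integ} only one-variable increments of $G^n$ occur, each multiplied by a $\phi$-increment in the other variable. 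That is why $d_5$ and $d_6$ appear only as $d_5d^{(n)}_1$ and $d_6d^{(n)}_2$.

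Your split instead leaves $\phi(u,t)\bigl(G^n(s,t)-G^n(u,t)\bigr)$, and $\phi(u,v)\Delta_{(s,t)}G^n(u,v)$ in the rectangle, with no $\phi$-increment. Bounding the former with the appendix estimate, as you propose, gives $C_ns^{\gamma_{n+1}}t^{\delta_{n+1}}$ times
\[
\frac{c_1}{ab\Gamma(a)\Gamma(b)}\int_0^1\frac{\theta^{a'-a}\bigl[(1-\theta)^a+1-\theta^a\bigr]}{(1-\theta)^{a'+1}}\,d\theta ,
\]
which is a constant independent of $n$. So your induction closes only with some other factor $\tilde r_n$ that does not tend to $0$, not with the stated $r_n$, every term of which tends to $0$. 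Your sentence ``collecting the terms gives $r_nC_n$'' is therefore unsupported. This matters beyond bookkeeping: the decay of $r_n$ is what later yields $C_n=O(B^n/(n!)^{\eta})$ and the convergence of the Neumann series defining $u$.
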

With  Proposition \ref{prop:f_n} in mind, the estimation of $\mathcal{J}^{n}_{s, t} $ is given in the following proposition.
\begin{prop} \label{proposiiton:J_n} For all $s, t \in [0, T]$, and  $n\in \mathbb{N}$, we have
\begin{eqnarray} \label{estim:J_n} 
 \mathcal{J}^n_{s, t}  & \leq & C_n^{\star} s^{(n+1)a-na'}t^{(n+1)b-nb'},
 \end{eqnarray}
 where the sequence $(C^{\star}_n)$ is given by
 \begin{eqnarray}
\left\{\begin{array}{l}
C^{\star}_0 =c^{\star}_0,\notag \\
\\
C^{\star}_{n+1}:=  \left\{\kappa'_n + \tilde{\kappa'}_n(c_1d^{(n)}_1+ c_2d^{(n)}_2 +c_3d^{(n)}_1d^{(n)}_2)\right\}C^{\star}_n +\dfrac{c_3(d_5 d^{(n)}_1 +d_6d^{(n)}_2)}{ab\Gamma(a)\Gamma(b)}C_n\\
\qquad \,\,\,\,=:   m_n C^{\star}_n +l_nC_n,
\end{array}\right.
\end{eqnarray}
with  \[\kappa'_n:= \dfrac{\Gamma(\alpha_n + a)}{\Gamma(\alpha_n +2a-a' )}\dfrac{\Gamma(\beta_n + b)}{\Gamma(\beta_n +2b-b' )}, \qquad \tilde{\kappa}'_n:= \dfrac{\Gamma(\alpha_n + a-a')}{\Gamma(\alpha_n +2a-a' )}\dfrac{\Gamma(\beta_n +b -b')}{\Gamma(\beta_n +2b-b' )}.\]
\end{prop}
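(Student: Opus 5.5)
We argue by induction on $n$, using the recurrence (\ref{Recurence:G_n}) together with Proposition \ref{prop:f_n}. Since $\mathcal{J}^n_{s,t}=s^{-a}t^{-b}|G^n(s,t)|$ with $G^n=I^{a,b}_{0^+}f_n$, the claim is equivalent to
\[
|G^n(s,t)|\le C^\star_n\, s^{(n+2)a-na'}t^{(n+2)b-nb'}.
\]
For $n=0$ we have $|f_0(s,t)|\le C_0 s^{a}t^{b}$. The Beta-integral identity $I^{a}_{0^+}(s^{\gamma})=\frac{\Gamma(\gamma+1)}{\Gamma(\gamma+1+a)}s^{\gamma+a}$, applied in each variable, then gives the bound with an explicit constant $c_0^\star$. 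This is the same computation as in (\ref{e:first}).

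For the induction step we write
\[
G^{n+1}=I^{a,b}_{0^+}\big(s^{a-a'}t^{b-b'}D^{a',b'}_{0^+}(s^{a'-a}t^{b'-b}G^n)\big).
\]
The idea is to expand $D^{a',b'}_{0^+}$ of the product $h:=s^{a'-a}t^{b'-b}G^n$ through the Weil representation (\ref{fweil-rep}). The four Weil terms are the boundary term $h(s,t)s^{-a'}t^{-b'}$, the two single-difference terms weighted by $c_1,c_2$, and the double-difference term weighted by $c_3$.

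In each difference we split $h(s,t)-h(u,t)$ into two pieces:
\[
s^{a'-a}\big(G^n(s,t)-G^n(u,t)\big)+\big(s^{a'-a}-u^{a'-a}\big)G^n(u,t).
\]
The second piece, after the substitution $u=s\theta$ and use of the power bound on $G^n$ from the induction hypothesis, produces exactly the factor $d^{(n)}_1$. The mixed difference is handled the same way and produces $d^{(n)}_1d^{(n)}_2$, or a combination involving $d_5,d_6$ when one of the differences is of $G^n$ itself.

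The pieces that still contain differences of $G^n$ must be bounded in a way that closes the recursion. For these we use that $G^n=I^{a,b}_{0^+}f_n$, so differences of $G^n$ are differences of Riemann–Liouville integrals. The appendix estimate for such differences, combined with $|f_n|\le C_nu^{(n+1)a-na'}v^{(n+1)b-nb'}$ from Proposition \ref{prop:f_n}, gives a bound by $C_n$ times a power. This is where the term $l_nC_n$ enters, carrying the constants $d_5,d_6$ and $ab\Gamma(a)\Gamma(b)$ from integrating $|1-v|^b$-type increments.

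Every remaining term is of the form $C^\star_n$ times a monomial $s^{\gamma}t^{\delta}$ with $\gamma=\alpha_n+a-a'-1$, up to the prefactor $s^{a-a'}$. Applying the outer $I^{a,b}_{0^+}$ to such a monomial, via the Beta identity again, yields the Gamma-ratios $\kappa'_n$ (from the boundary term) and $\tilde\kappa'_n$ (from the difference terms). The power obtained is $s^{(n+3)a-(n+1)a'}$, which is consistent with the claimed form. Collecting the terms gives $C^\star_{n+1}=m_nC^\star_n+l_nC_n$.

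The main obstacle is the handling of the terms involving differences $G^n(s,t)-G^n(u,t)$. A pointwise power bound on $G^n$ alone does not control them: it does not give integrability of $(s-u)^{-a'-1}$ times the difference near $u=s$. We would therefore rely on the Hölder-type difference estimate for $I^{a,b}_{0^+}$ from the appendix. We also need $a>a'$ (that is, $\alpha<\alpha'$), so that the increment exponent $a$ beats $a'$ and the integrals defining $d_5,d_6$ converge.
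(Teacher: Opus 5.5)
There is a genuine gap. Your induction frame and base case match the paper. So does your handling of the pieces carrying increments of the weight $w(u,v)=u^{a'-a}v^{b'-b}$, which give $c_1d^{(n)}_1$, $c_2d^{(n)}_2$, $c_3d^{(n)}_1d^{(n)}_2$ and the $d_5,d_6$ cross terms.

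The problem is the remaining ``pure'' pieces. These are the boundary term, the single increments $w(s,t)(G^n(s,t)-G^n(u,t))$ and $w(s,t)(G^n(s,t)-G^n(s,v))$, and the rectangular increment $w(s,t)\Delta_{(s,t)}G^n(u,v)$. Splitting the mixed difference necessarily leaves that last term over, and you do not list it.

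The appendix estimates (Proposition \ref{Prop:A1}, Corollary \ref{est:RF-d}) only control one-directional increments. Bounding $|\Delta_{(s,t)}G^n(u,v)|$ by a sum of them gives at best $O(|s-u|^{a})$ uniformly in $v$, or the symmetric bound. Then $\int_0^t(t-v)^{-b'-1}dv=\infty$, so this term is not controlled by the tool you name.

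Even where you can bound things, the constants do not close to the stated recursion. After the prefactor, your boundary term is $s^{-a'}t^{-b'}G^n(s,t)/(\Gamma(1-a')\Gamma(1-b'))$. Applying $I^{a,b}_{0^+}$ to $s^{(n+2)a-(n+1)a'}t^{(n+2)b-(n+1)b'}$ produces $\Gamma(\alpha_n+a-a')/\Gamma(\alpha_n+2a-a')$ times its $t$-analogue. That is a $\tilde\kappa'_n$-type factor, not $\kappa'_n$. Also, the pure single increments estimated by the appendix give $C_n$-terms weighted by $c_1,c_2$ with no factor $d^{(n)}_j$, and such terms are absent from $l_n$. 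So ``collecting the terms gives $m_nC^\star_n+l_nC_n$'' does not follow from your decomposition.

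The missing idea is to keep the pure pieces together. They sum to $w\,D^{a',b'}_{0^+}G^n$, and the product-rule decomposition (\ref{weil-rep}) reads $D^{a',b'}_{0^+}(wG^n)=w\,D^{a',b'}_{0^+}G^n+R_n$, where $R_n$ involves only increments of $w$.

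Since $G^n=I^{a,b}_{0^+}f_n$ with $a>a'$ and $b>b'$, the composition rule gives $D^{a',b'}_{0^+}G^n=I^{a-a',b-b'}_{0^+}f_n$. Hence $I^{a,b}_{0^+}(s^{a-a'}t^{b-b'}\,w\,D^{a',b'}_{0^+}G^n)=I^{a-a',b-b'}_{0^+}G^n$. The induction hypothesis and the Beta identity, applied with $I^{a-a',b-b'}_{0^+}$ to $s^{(n+2)a-na'}t^{(n+2)b-nb'}$, then give exactly $\kappa'_nC^\star_n$. This is the paper's $K^{1,n+1}$ and the only source of $\kappa'_n$, whose numerator is $\Gamma(\alpha_n+a)$.

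In $R_n$ no pure increment of $G^n$ appears. The only increments of $G^n$ left are the one-directional ones in the cross terms $L^{2,n+1}$ and $L^{3,n+1}$. That is precisely where Corollary \ref{est:RF-d}, combined with Proposition \ref{prop:f_n}, yields $l_nC_n$.
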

\begin{Proof} 
We employ an induction argument on $n$. For $n=0$, we have
\[
 \mathcal{J}^0_{s, t}:= s^{-a}t^{-b} |I^{a, b}_{0+}( s^{a}t^{b}b_{s, t} )| \leq  \dfrac{||b||_{\infty}}{4}\dfrac{\Gamma(a)\Gamma(b)}{\Gamma(2a)(2b)}s^{a}t^b=:c^{\star}_0s^{a}t^{b}.
\]
Let $n \in \mathbb{N}$.  Assuming (\ref{estim:J_n}) to hold for $n$, we will prove it for $n+1$.\\
\noindent Recall that
\begin{eqnarray}\label{e:J_{n+1}}
 \mathcal{J}^{n+1}_{s, t}:= s^{-a}t^{-b} | G^{n+1}(s, t)|= s^{-a}t^{-b} \left| I^{a, b}_{0+}\left( s^{a-a'}t^{b-b'}  D^{a^{\prime}, b^{\prime}}_{0^{+}} ( s^{a^{\prime}-a}t^{b^{\prime}-b}G^{n}(s, t)) \right)\right|,
\end{eqnarray}
and observe that for a function $g_{s, t}$, in view of Weil representation (\ref{fweil-rep}), we have 
\begin{eqnarray}
   \label{weil-rep}
    &&D^{a^{\prime}, b^{\prime}}_{0^{+}}( s^{a-a^{\prime}}t^{b-b^{\prime}}g_{s, t})=  s^{a-a^{\prime}}t^{b-b^{\prime}}D^{a^{\prime}, b^{\prime}}_{0^{+}}g_{s, t}  \nonumber\\   \nonumber
     & &+  \dfrac{1}{\Gamma(1- a^{\prime})\Gamma(1-b^{\prime})} \left( \dfrac{a^{\prime}}{t^b} \int_{0}^{s} \dfrac{s^{a^{\prime}-a} - u^{a^{\prime}-a}}{(s-u)^{a^{\prime} +1 }} g_{u, t}du + \dfrac{b^{\prime}}{s^{a}}\int_{0}^{t} \dfrac{t^{b^{\prime}-b} - v^{b^{\prime}-b}}{(t-v)^{b^{\prime} +1 }} g_{s, v}dv \right.
   \\
   & & + \left. a^{\prime}b^{\prime} \int_{0}^{s}\int_{0}^{t} \dfrac{\Delta_{(s, t)}(g_{u, v}u^{a^{\prime}-a}v^{b^{\prime}-b})-s^{a^{\prime}-a}t^{b^{\prime}-b}\Delta_{(s, t)}(g_{u, v})}{(s-u)^{a^{\prime} +1} (t-v)^{b^{\prime} +1}} du dv\right)\mathbbm{1}_{(0, T)^{2}}(s, t), \nonumber\\
\end{eqnarray}
where  $\Delta_{(s, t)}h(u, v):=h(s, t)-h(s, v)-h(u, t)+h(u, v)$ for any function $h$. The double integral in (\ref{weil-rep}) is also given by 
\begin{align*}
&\iint_{[0,s] \times [0,t]} (s - u)^{-a'-1} (t - v)^{-b'-1} \bigg[
\left( s^{a' - a} - u^{a' - a} \right) t^{b' - b} \, g(u, t) (u,t) \notag  \\  
&\quad + s^{a' - a} \left( t^{b' - b} - v^{b' - b} \right) g(s,v)
+ \left( u^{a' - a} v^{b' - b} - s^{a' - a} t^{b' - b} \right) g(u, v)
\bigg] \, du \, dv.
\end{align*}
Notice that the function inside this double integral can be  rewritten as
\begin{align}\label{e:doub-integ}
& \bigg[
\left( s^{a' - a} - u^{a' - a} \right) t^{b' - b} \, g (u,t) \notag + s^{a' - a} \left( t^{b' - b} - v^{b' - b} \right) g(s,v)
+ \left( u^{a' - a} v^{b' - b} - s^{a' - a} t^{b' - b} \right) g (u, v)
\bigg] \notag  \\  
& = \left( s^{a' - a}  - u^{a' - a}  \right)  \left( t^{b' - b}  - v^{b' - b}  \right) g (u, v)\notag  \\  
&\quad + \left( s^{a' - a}  - u^{a' - a}  \right)  t^{b' - b} \left(  g (u, t)  - g (u, v)   \right) \\ 
&\quad + \left( t^{b' - b}  - v^{b' - b}  \right)  s^{a' - a} \left(  g (s, v)  - g (u, v)   \right)\notag .
\end{align}
From this and by applying (\ref{weil-rep}) to the Riemann-Liouville derivative  $D^{a^{\prime}, b^{\prime}}_{0^{+}} ( s^{a^{\prime}-a}t^{b^{\prime}-b}G^{n}(s, t))$ in (\ref{e:J_{n+1}}), we obtain the following inequality
\begin{align} \label{e:J decomp}
\mathcal{J}_{s,t}^{n+1} \leq\ 
& s^{-a} t^{-b} I_{0+}^{a-a',\, b-b'} \left( s^{a} t^{b} \mathcal{J}_{s,t}^{n} \right) \notag \\
& + \dfrac{a'}{\Gamma(1-a')\Gamma(1-b')}
\left| s^{-a} t^{-b} I_{0+}^{a,b} \left(
s^{a - a'} t^{b - b'} \int_0^s \frac{s^{a' - a} - u^{a' - a}}{(s - u)^{a'+1}} u^{a} \mathcal{J}_{u,t}^{n} \, du \right) \right| \notag \\
& + \dfrac{b'}{\Gamma(1-a')\Gamma(1-b')}
\left| s^{-a} t^{-b} I_{0+}^{a,b} \left(
s^{a - a'} t^{b - b'} \int_0^t \frac{t^{b' - b} - v^{b' - b}}{(t - v)^{b'+1}} v^{b} \mathcal{J}_{s,v}^{n} \, dv \right) \right| \notag \\
& +\dfrac{a'b'}{\Gamma(1-a')\Gamma(1-b')}
\left| s^{-a} t^{-b} I_{0+}^{a,b} \left(
s^{a - a'} t^{b - b'} \mathcal{H}_n(s, t) \right) \right| \notag \\
& =: K_{s,t}^{1,n+1} + K_{s,t}^{2,n+1} + K_{s,t}^{3,n+1} + K_{s,t}^{4,n+1} ,
\end{align}
where inside the last term $K_{s,t}^{4,n+1}$, the term  $\mathcal{H}_n(s, t)$ is defined by
\begin{align}\label{H^n}
& \iint_{[0,s] \times [0,t]} du dv(s - u)^{-a'-1} (t - v)^{-b'-1} \notag \\
&\hspace{-.3cm}\times\bigg[
\left( s^{a' - a} - u^{a' - a} \right) t^{b' - b} \, G^{n} (u,t) \notag + s^{a' - a} \left( t^{b' - b} - v^{b' - b} \right) G^{n} (s,v)
+ \left( u^{a' - a} v^{b' - b} - s^{a' - a} t^{b' - b} \right) G^{n} (u, v)
\bigg]\notag \\
& =  \iint_{[0,s] \times [0,t]} (s - u)^{-a'-1} (t - v)^{-b'-1} \left( s^{a' - a}  - u^{a' - a}  \right)  \left( t^{b' - b}  - v^{b' - b}  \right) G^{n} (u, v) \, du \, dv \notag \\
&\quad +  \iint_{[0,s] \times [0,t]} (s - u)^{-a'-1} (t - v)^{-b'-1} \left( s^{a' - a}  - u^{a' - a}  \right)  t^{b' - b} \left(  G^{n} (u, t)  - G^{n} (u, v)   \right)  \, du \, dv\notag \\
&\quad +  \iint_{[0,s] \times [0,t]}  (s - u)^{-a'-1} (t - v)^{-b'-1}\left( t^{b' - b}  - v^{b' - b}  \right)  s^{a' - a} \left(  G^{n} (s, v)  - G^{n} (u, v)   \right) \, du \, dv\notag \\
& =: L_{s,t}^{1,n+1} + L_{s,t}^{2,n+1} + L_{s,t}^{3,n+1} .
\end{align}
In view of (\ref{e:J decomp}), estimating $\mathcal{J}_{s,t}^{n+1}$ reduces to estimating each  term $K_{s,t}^{i,n+1}$, $i=1, 2, 3$. The next lemma gives the necessary estimates.
\begin{lem}\label{est: K_n} We have  the following estimates
\begin{itemize}
\item[i)-]  \[ K_{s,t}^{1,n+1} \leq C^{\star}_n  \kappa'_n s^{(n+2)a-(n+1)a'} t^{(n+2)b-(n+1)b'} , \]
\item[ii)-]  \[ K_{s,t}^{2,n+1} \leq  c_1 d^{(n)}_1\tilde{\kappa}'_nC^{\star}_n  s^{(n+2)a-(n+1)a'} t^{(n+2)b-(n+1)b'} ,\]
\item[iii)-]  \[ K_{s,t}^{3,n+1} \leq  c_2 d^{(n)}_2\tilde{\kappa}'_nC^{\star}_n  s^{(n+2)a-(n+1)a'} t^{(n+2)b-(n+1)b'} , \]
\item[iv)-]  \[ K_{s,t}^{3,n+1} \leq  c_3 \left( d^{(n)}_1d^{(n)}_2 \tilde{\kappa}'_nC^{\star}_n  + \dfrac{d_5d^{(n)}_1 +d_6d^{(n)}_2 }{ab\Gamma(a)\Gamma(b)}\tilde{\kappa}'_n C_n \right)  s^{(n+2)a-(n+1)a'} t^{(n+2)b-(n+1)b'} . \]
\end{itemize}
\end{lem}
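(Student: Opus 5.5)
The plan is to bound the four terms $K^{i,n+1}_{s,t}$ one at a time. Each bound combines three ingredients: the induction hypothesis $\mathcal{J}^n_{u,v}\le C^\star_n u^{(n+1)a-na'}v^{(n+1)b-nb'}$ (equivalently $|G^n(u,v)|\le C^\star_n u^{\alpha_n-1+a}v^{\beta_n-1+b}$), the bound of Proposition \ref{prop:f_n} on $f_n$, and the Beta-function identity
\[
I^{\gamma}_{0^+}(s^{\mu})=\frac{\Gamma(\mu+1)}{\Gamma(\mu+1+\gamma)}\,s^{\mu+\gamma}.
\]
Since $(\alpha',\beta')\prec(\alpha,\beta)$, we have $a<a'$ and $b<b'$. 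All the kernels appearing are therefore nonnegative or of a fixed sign, so the absolute values can be pushed inside the integrals and monotonicity of $I^{a,b}_{0^+}$ on nonnegative functions applies.

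\textbf{Step i).} Insert the induction bound into $I^{a-a',b-b'}_{0^+}(s^at^b\mathcal{J}^n)$. The integrand is a product power $s^{\alpha_n+a-1}t^{\beta_n+b-1}$, so two one-dimensional Beta integrals give $\frac{\Gamma(\alpha_n+a)}{\Gamma(\alpha_n+2a-a')}\frac{\Gamma(\beta_n+b)}{\Gamma(\beta_n+2b-b')}=\kappa'_n$. After multiplying by $s^{-a}t^{-b}$, the exponents become $(n+2)a-(n+1)a'$ and $(n+2)b-(n+1)b'$. (Formally $I^{a-a'}$ has negative order, so it is read as the composition $D^{a'}I^{a}$; on powers the same Gamma formula holds.)

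\textbf{Steps ii) and iii).} In the inner integral $\int_0^s \frac{s^{a'-a}-u^{a'-a}}{(s-u)^{a'+1}}u^a\mathcal{J}^n_{u,t}\,du$, plug in the induction bound and substitute $u=s\theta$. This yields
\[
s^{\alpha_n+a-a'-1}\,t^{(n+1)b-nb'}\,d^{(n)}_1\,C^\star_n,
\]
where the integrand $(1-\theta^{a'-a})(1-\theta)^{-a'-1}$ is integrable near $\theta=1$ because $1-\theta^{a'-a}\sim(a'-a)(1-\theta)$ and $a'<1$. Multiplying by $s^{a-a'}t^{b-b'}$, applying $I^{a,b}_{0^+}$, and then multiplying by $s^{-a}t^{-b}$, the Beta integrals produce exactly the ratio $\tilde\kappa'_n$ with the prefactor $c_1$. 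Term iii) is the same computation in the $t$ variable, giving $c_2 d^{(n)}_2\tilde\kappa'_n C^\star_n$.

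\textbf{Step iv).} Split $\mathcal{H}_n$ into $L^1+L^2+L^3$ according to \eqref{H^n}.
\begin{itemize}
\item $L^1$ factorises. Substituting $u=s\theta$, $v=t\eta$ gives $d^{(n)}_1d^{(n)}_2C^\star_n$ times a product power, and the outer $I^{a,b}$ again produces $\tilde\kappa'_n$.
\item $L^2$ and $L^3$ involve increments $G^n(u,t)-G^n(u,v)$ and $G^n(s,v)-G^n(u,v)$, and this is the main obstacle. The induction bound on $|G^n|$ does not control increments, and the singular weight $(t-v)^{-b'-1}$ requires a Hölder-type gain.
\end{itemize}
To handle the increments, write $G^n=I^{a,b}_{0^+}f_n$ and use the bound $|f_n|\le C_n s^{(n+1)a-na'}t^{(n+1)b-nb'}$ from Proposition \ref{prop:f_n}. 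Then
\[
G^n(u,t)-G^n(u,v)=I^{a}_{0^+}\big[\,I^b_{0^+}f_n(\cdot,t)-I^b_{0^+}f_n(\cdot,v)\,\big](u),
\]
and the difference of one-parameter Riemann–Liouville integrals is controlled by the standard splitting $\int_v^t(t-y)^{b-1}+\int_0^v[(v-y)^{b-1}-(t-y)^{b-1}]$. This gives a bound of the form
\[
\frac{C_n}{b\,\Gamma(b)}\Big(|t-v|^b+|t^b-v^b|\Big)\times(\text{powers}),
\]
which is the appendix-type estimate for differences of two-parameter fractional integrals; I would invoke that estimate here. The $u$ factor is handled through $I^a$ of a power, which accounts for the $\frac{1}{a\Gamma(a)}$.

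Substituting this into $L^2$, the $v$-integral against $(t-v)^{-b'-1}$ after scaling is bounded by $d_5$ (this is why $b>$ is not needed: $|1-\eta|^{b}$ together with $1-\eta^{b}$ must beat $(1-\eta)^{-b'-1}$, and I would check integrability using $b'<1$, possibly needing $b'<b$-type care; this is the delicate point). The $u$-integral gives $d^{(n)}_1$. Symmetrically, $L^3$ gives $d_6d^{(n)}_2$. Applying the outer $I^{a,b}$ and the normalisation once more produces $\tilde\kappa'_n$ and the factor $\frac{c_3}{ab\Gamma(a)\Gamma(b)}$. Summing the three pieces gives iv), and summing i)–iv) in \eqref{e:J decomp} yields the recursion $C^\star_{n+1}=m_nC^\star_n+l_nC_n$ of Proposition \ref{proposiiton:J_n}.
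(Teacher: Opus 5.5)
Your outline follows the paper's proof step for step. For i)--iii) you use the induction bound on $\mathcal{J}^n$ plus Beta integrals. For iv) you split $\mathcal{H}_n=L^{1,n+1}_{s,t}+L^{2,n+1}_{s,t}+L^{3,n+1}_{s,t}$, use the induction bound for $L^{1,n+1}$, and handle the increments in $L^{2,n+1},L^{3,n+1}$ with Corollary~\ref{est:RF-d} applied to $G^n=I^{a,b}_{0^+}f_n$ and the bound of Proposition~\ref{prop:f_n}; that is why $C_n$ rather than $C^\star_n$ appears there. The real problem is that you have the ordering of the parameters backwards, and your justifications rely on that wrong ordering.

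The lemma belongs to case (b), $(\alpha,\beta)\prec(\alpha',\beta')$, so $a=\frac12-\alpha>a'=\frac12-\alpha'$ and $b>b'$. The opening sentence of the subsection constructing $u,v$ states the opposite, but the Neumann-series construction of $u$ and the asymptotic lemma for $d^{(n)}_1$ both need $a>a'$, $b>b'$. Under your ordering $a<a'$, $b<b'$, three steps fail.
\begin{enumerate}
\item In i), the bound on $K^{1,n+1}_{s,t}$ works only because $I^{a-a',b-b'}_{0^+}$ is a genuine fractional integral with nonnegative kernel, hence monotone. That is what lets you insert the pointwise bound on $\mathcal{J}^n$. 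If it were the negative-order operator $D^{a'}I^{a}$ you describe, a pointwise upper bound on $\mathcal{J}^n$ would give no bound at all, since fractional derivatives do not preserve inequalities.
\item In iv), $d_5=\int_0^1\frac{(1-v)^b+(1-v^b)}{(1-v)^{b'+1}}\,dv$ is finite if and only if $b>b'$, because the first term behaves like $(1-v)^{b-b'-1}$. Likewise $d_6<\infty$ if and only if $a>a'$. So the ``delicate point'' you flag is settled exactly by $b>b'$, and would be fatal under $b<b'$.
\item The exponents $(n+1)a-na'=a+n(a-a')$ must stay nonnegative for every $n$. This is needed for the Beta evaluations and for convergence of $d^{(n)}_1$ at $0$. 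It is also needed so that $f_n$ is bounded on $[0,u]\times[0,t]$ by $M=C_n u^{(n+1)a-na'}t^{(n+1)b-nb'}$, which is the constant you must feed into Corollary~\ref{est:RF-d}. With $a<a'$ all of this breaks down for large $n$.
\end{enumerate}

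With the correct ordering, $s^{a'-a}-u^{a'-a}\le 0$ for $u\le s$, so your fixed-sign remark still holds; just read $d^{(n)}_1$ with $|1-\theta^{a'-a}|$.

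There is also a bookkeeping slip in ii). The inner integral equals $C^\star_n d^{(n)}_1 s^{(n+1)a-na'}t^{(n+1)b-nb'}$. The exponent you wrote, $\alpha_n+a-a'-1=(n+2)a-(n+1)a'$, already includes the factor $s^{a-a'}$ that you then multiply by again.

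Once you switch to $a>a'$, $b>b'$ and fix that exponent, your argument coincides with the paper's.
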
 
\noindent \textbf{Proof  of Lemma \ref{est: K_n}}: $i)-$ For $K_{s,t}^{1,n+1}$, by using the induction hypothesis, we have
\begin{align*} 
&K_{s,t}^{1,n+1}\leq\ 
C^{\star}_ns^{-a} t^{-b} I_{0+}^{a-a',\, b-b'} \left( s^{(n+2)a-na'} t^{(n+2)b-b'} \right) \notag \\
&= \dfrac{C^{\star}_n\Gamma((n+2)a-na'+1)\Gamma((n+2)b-nb'+1)}{\Gamma((n+3)a-(n+1)a'+1)\Gamma((n+3)b-(n+1)b+1')}s^{(n+2)a-(n+1)a'} t^{(n+2)b-(n+1)b'} \notag \\
&= C^{\star}_n\dfrac{\Gamma(\alpha_n +a)\Gamma(\beta_n +b)}{\Gamma(\alpha_n +2a-a')\Gamma(\beta_n +2b-b')}s^{(n+2)a-(n+1)a'} t^{(n+2)b-(n+1)b'} \notag \\
&=C^{\star}_n \kappa'_{n} s^{(n+2)a-(n+1)a'} t^{(n+2)b-(n+1)b'}.
\end{align*}
$ii)-$ For $K_{s,t}^{2, n+1}$, by using the induction assumption, we have
 \begin{align*} 
&K_{s,t}^{2,n+1} = c_1  \left| s^{-a}t^{-b} I_{0+}^{a,\, b} \left(  s^{a-a'}t^{b-b'}    \int_{0}^{s}   \dfrac{s^{a'-a}-u^{a'-a}}{(s-u)^{a'+1}} u^{a}\mathcal{J}_{u,t}^{n} du \right) \right| \\
&\leq\ 
 c_1 C^{\star}_ns^{-a}t^{-b}\times 
I_{0+}^{a,\, b} \left(s^{a-a'} t^{b-b'}\int_{0}^{s} \dfrac{|u^{a'-a}-s^{a'-a}|}{(s-u)^{a'+1}} u^{a}  u^{(n+1)a-na'} t^{(n+1)b-nb'} \right) \notag \\
&=c_1  d_1^{(n)}C^{\star}_n \dfrac{\Gamma(\alpha_n +a-a')\Gamma(\beta_n +b-b')}{\Gamma(\alpha_n +2a-a')\Gamma(\beta_n +2b-b')}s^{(n+2)a-(n+1)a'} t^{(n+2)b-(n+1)b'}\notag \\
&=c_1 d_1^{(n)}C^{\star}_n \tilde{\kappa}'_{n} s^{(n+2)a-(n+1)a'} t^{(n+2)b-(n+1)b'}.
\end{align*}
$iii)-$ Similarly, for $K_{s,t}^{3, n+1}$ we have
 \begin{align*} 
&K_{s,t}^{3,n+1} \leq\ 
 c_2 C^{\star}_ns^{-a}t^{-b}\times 
I_{0+}^{a,\, b} \left(s^{a-a'} t^{b-b'}\int_{0}^{s} \dfrac{|t^{b'-b}-v^{b'-b}|}{(t-v)^{b'+1}} v^{b}  v^{(n+1)b-nb'} s^{(n+1)a-na'} \right) \notag \\
&=c_2  d_2^{(n)}C^{\star}_n \dfrac{\Gamma(\alpha_n +a-a')\Gamma(\beta_n +b-b')}{\Gamma(\alpha_n +2a-a')\Gamma(\beta_n +2b-b')}s^{(n+2)a-(n+1)a'} t^{(n+2)b-(n+1)b'}\notag \\
&=c_2 d_2^{(n)}C^{\star}_n \tilde{\kappa}'_{n} s^{(n+2)a-(n+1)a'} t^{(n+2)b-(n+1)b'}.
\end{align*}
$iv)-$ To estimate $K_{s,t}^{3,n+1}$ we need the following claim whose proof is postponed to Section \ref{Sect:Tech}.
\begin{claim} The following estimates hold
\begin{itemize}
\item[a)-] $$s^{-a}t^{-b} I_{0+}^{a,\, b}(s^{a-a'}t^{b-b'}|L^{1, n+1}_{s, t}|) \leq d_1^{(n)}d_2^{(n)}\tilde{\kappa}'_{n}C^{\star}_n s^{(n+2)a-(n+1)a'} t^{(n+2)b-(n+1)b'} ,$$
\item[b)-] $$s^{-a}t^{-b} I_{0+}^{a,\, b}(s^{a-a'}t^{b-b'}|L^{2, n+1}_{s, t}|) \leq \dfrac{d_5d_1^{(n)}C_n}{ab\Gamma(a)\Gamma(b)}\tilde{\kappa}'_{n} s^{(n+2)a-(n+1)a'} t^{(n+2)b-(n+1)b'} ,$$
\item[c)-] $$s^{-a}t^{-b} I_{0+}^{a,\, b}(s^{a-a'}t^{b-b'}|L^{3, n+1}_{s, t}|) \leq \dfrac{d_6d_2^{(n)}C_n}{ab\Gamma(a)\Gamma(b)}\tilde{\kappa}'_{n} s^{(n+2)a-(n+1)a'} t^{(n+2)b-(n+1)b'} .$$
\end{itemize}
\end{claim}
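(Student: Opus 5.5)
The three estimates share a common route: bound the inner double integral $L^{i,n+1}_{s,t}$ pointwise by a constant multiple of $s^{(n+1)a-na'-a}t^{(n+1)b-nb'-b}$, up to the factors $s^{a'-a}$, $t^{b'-b}$ it carries. We then multiply by $s^{a-a'}t^{b-b'}$, apply $I^{a,b}_{0+}$, and evaluate with the Beta-function identity
\[
I^{a}_{0+}(s^{\gamma})=\frac{\Gamma(\gamma+1)}{\Gamma(\gamma+1+a)}s^{\gamma+a}.
\]
With the exponents $\gamma=\alpha_n+a-a'-1$ (and the analogue in $t$), the constant produced is exactly $\tilde{\kappa}'_n$. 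We also use the identity $G^n(s,t)=s^{a}t^{b}\,(\pm\mathcal{J}^n_{s,t})$, that is $|G^n(u,v)|=u^{a}v^{b}\mathcal{J}^n_{u,v}\le C^\star_n u^{\alpha_n+a-1}v^{\beta_n+b-1}$, which is the induction hypothesis of Proposition \ref{proposiiton:J_n}.

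\emph{Part a).} The integrand of $L^{1,n+1}$ factorizes. We insert the bound on $|G^n(u,v)|$ and substitute $u=s\theta$, $v=t\eta$. Since
\[
|s^{a'-a}-u^{a'-a}|=s^{a'-a}\,(1-\theta^{a'-a}),
\]
the $u$-integral becomes
\[
s^{a'-a}\,s^{-a'}\,s^{\alpha_n+a-1}\int_0^1\frac{1-\theta^{a'-a}}{(1-\theta)^{a'+1}}\,\theta^{\alpha_n+a-1}\,d\theta = d_1^{(n)}\,s^{\alpha_n-1}.
\]
This integral converges because $1-\theta^{a'-a}\sim(a-a')(1-\theta)$ near $\theta=1$ (recall $a'<a$ in case (b)) and $a'<1$. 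The $t$-integral gives $d_2^{(n)}t^{\beta_n-1}$ in the same way. Hence
\[
s^{a-a'}t^{b-b'}|L^{1,n+1}_{s,t}|\le C^\star_n d^{(n)}_1d^{(n)}_2\, s^{\alpha_n+a-a'-1}t^{\beta_n+b-b'-1}.
\]
Applying $s^{-a}t^{-b}I^{a,b}_{0+}$ yields the exponent $(n+2)a-(n+1)a'$ and the constant $\tilde{\kappa}'_n$.

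\emph{Parts b) and c).} These are harder, since they involve increments of $G^n$ in one variable, $G^n(u,t)-G^n(u,v)$, and the singular factor $(t-v)^{-b'-1}$ must be absorbed by regularity of $G^n$ in $t$. The plan is to use the representation $G^n=I^{a,b}_{0+}(f_n)$ together with the bound $|f_n|\le C_n s^{(n+1)a-na'}t^{(n+1)b-nb'}$ from Proposition \ref{prop:f_n}. We write
\[
G^n(u,t)-G^n(u,v)=\frac{1}{\Gamma(a)\Gamma(b)}\int_0^u\!(u-x)^{a-1}\Big[\int_0^t(t-y)^{b-1}f_n\,dy-\int_0^v(v-y)^{b-1}f_n\,dy\Big]dx,
\]
and split the bracket in the standard way into $\int_v^t(t-y)^{b-1}f_n\,dy$ and $\int_0^v[(t-y)^{b-1}-(v-y)^{b-1}]f_n\,dy$. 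Bounding $|f_n|$ by its value at the upper endpoints, each piece is $\le \tfrac{C}{b}\big(|t-v|^{b}+|t^{b}-v^{b}|\big)$ times $t^{(n+1)b-nb'}$ (the estimate from the appendix on differences of Riemann–Liouville integrals). The $x$-integration gives $u^{a}/a$ times the power of $u$. The factor $\frac{1}{ab\Gamma(a)\Gamma(b)}$ therefore appears, and after rescaling $v=t\eta$ the $v$-integral produces exactly $d_5$ (note its integrand $|1-v|^{b}+|t^{b}-v^{b}|$ in the scaled form). The $u$-integral with $(s^{a'-a}-u^{a'-a})$ produces $d_1^{(n)}$ as in a). Finally the same $I^{a,b}_{0+}$ computation gives $\tilde{\kappa}'_n$; c) follows by symmetry with $d_6d_2^{(n)}$.

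The main obstacle is the bookkeeping of exponents in b) and c). After bounding the increments, the power of $u$ must still match $u^{\alpha_n+a-1}$ so that $d^{(n)}_1$ emerges exactly, which may require using $u\le s$ and $v\le t\le T$ crudely (possibly absorbing harmless factors such as $T^{\cdot}$ into constants). One must also confirm that $b>b'$ ensures integrability of $(1-\eta)^{b-b'-1}$ near $\eta=1$.
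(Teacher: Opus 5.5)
Your plan follows the paper's route. For a) you use the induction bound $|G^n(u,v)|=u^av^b\mathcal{J}^n_{u,v}\le C^\star_n u^{\alpha_n+a-1}v^{\beta_n+b-1}$. For b) and c) you use the bound on $f_n$ from Proposition~\ref{prop:f_n} together with the increment estimate of Corollary~\ref{est:RF-d}, which is how the analogous claim for $\tilde L^i_n$ is handled in Section~\ref{Sect:Tech}. In all three parts the Beta integral turns $s^{\alpha_n+a-a'-1}t^{\beta_n+b-b'-1}$ into $\tilde{\kappa}'_n s^{(n+2)a-(n+1)a'}t^{(n+2)b-(n+1)b'}$.

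Part a) is correct up to a sign. Since $a'<a$, you have $|s^{a'-a}-u^{a'-a}|=s^{a'-a}(\theta^{a'-a}-1)$, so $d^{(n)}_1$ and $d^{(n)}_2$ must be read with absolute values in the integrand; the paper makes the same slip.

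The bookkeeping you worry about in b) and c) closes exactly, so you should not absorb powers of $T$, since the stated constants are $T$-free. Indeed $\sup_{[0,u]\times[0,t]}|f_n|\le C_nu^{(n+1)a-na'}t^{(n+1)b-nb'}$, and $u^{(n+1)a-na'}u^{a}=u^{\alpha_n+a-1}$ is exactly the power that produces $d^{(n)}_1$. Likewise $t^{b'-b}\cdot t^{b-b'}d_5$ leaves $t^{\beta_n-1}$.

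The real problem is the constant in b) and c). You bound each of the two pieces of the split by $\frac{C}{b}(|t-v|^b+|t^b-v^b|)\,t^{(n+1)b-nb'}$ and then claim the single factor $\frac{1}{ab\Gamma(a)\Gamma(b)}$; adding the two pieces gives twice that. What the split actually yields is $\frac{C}{b}[2(t-v)^b-(t^b-v^b)]$. This is sharp for a general $f$ with $|f|\le M$: take $f=M$ for $y>v$ and $f=-M$ for $y<v$, using $(t-y)^{b-1}<(v-y)^{b-1}$. It is not dominated by $\frac{C}{b}[(t-v)^b+(t^b-v^b)]$ for $v$ near $t$, where $(t-v)^b\gg t^b-v^b$.

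Integrating against $(t-v)^{-b'-1}$ does not repair this. You get $\int_0^1\frac{2(1-\eta)^b-(1-\eta^b)}{(1-\eta)^{b'+1}}d\eta$, which strictly exceeds $d_5$. The reason is that $\int_0^1(1-\eta)^{b-b'-1}d\eta=\frac{1}{b-b'}>\frac{2}{1-b'}\ge 2\int_0^1\frac{1-\eta^b}{(1-\eta)^{b'+1}}d\eta$, using $2b<1$ and $1-\eta^b\le 1-\eta$. So your argument proves b) and c) only with $2d_5$ and $2d_6$ in place of $d_5$ and $d_6$.

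The paper's Corollary~\ref{est:RF-d} carries the same error, because Remark~\ref{Rem:A1} wrongly asserts $A_1=0$ when $x_1=x_2$; only $A_2$ vanishes then. The factor $2$ is harmless downstream, since only $l_n=O(n^{-p})$ is used, and the fix is simply to carry it into $l_n$. As written, however, the constants in b) and c) are not established by your proof.
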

With this claim, we have 
 \begin{align*} 
&K_{s,t}^{4,n+1}\leq\ 
 c_3\left| s^{-a} t^{-b} I_{0+}^{a,b} \left(
s^{a - a'} t^{b - b'} \mathcal{H}_n(s, t) \right) \right|  \notag \\
&=c_3 \sum_{i=1}^{3} s^{-a}t^{-b} I_{0+}^{a,b} (s^{a-a'}t^{b-b'}|L^{i, n+1}_{s, t}|) \notag \\
& \leq \left(c_3  d_1^{(n)}d_2^{(n)} \tilde{\kappa}'_{n}C^{\star}_n  + \dfrac{c_3(d_5d_1^{(n)}+ d_6d_2^{(n)})\tilde{\kappa}'_{n} }{ab\Gamma(a)\Gamma(b)}C_n\right) s^{(n+2)a-(n+1)a'} t^{(n+2)b-(n+1)b'}.
\end{align*}
This finishes the proof of Lemma \ref{est: K_n}. 

Next, we give a proof of (\ref{estim:J_n}) for $n+1$. Using the decomposition (\ref{e:J decomp}) and in view of Lemma  \ref{est: K_n}, we deduce
 \begin{align*} 
&\mathcal{J}^{n+1}_{s, t}\leq\ 
 \sum_{i=1}^{3} K_{s,t}^{i,n+1} \notag\\
 &\leq  \left\{C^{\star}_n\left( \kappa'_{n} + c_1d_1^{(n)} \tilde{\kappa}'_{n}+c_2d_2^{(n)} \tilde{\kappa}'_{n}+  c_3 d_1^{(n)}d_2^{(n)} \tilde{\kappa}'_{n}     \right)\right.\notag\\
 &\left.+ c_3C_n\tilde{\kappa}'_{n}\dfrac{d_5d_1^{(n)}+ d_6d_2^{(n)}}{ab\Gamma(a)\Gamma(b)}\right\} s^{(n+2)a-(n+1)a'} t^{(n+2)b-(n+1)b'}\notag\\
 &=:C^{\star}_{n+1}s^{(n+2)a-(n+1)a'} t^{(n+2)b-(n+1)b'}.
\end{align*}
This is (\ref{estim:J_n})  for $n+1$. This concludes the inductive step and hence the proof of Proposition \ref{proposiiton:J_n}.

We  are now in position to complete the proof of Novikov's condition. From (\ref{e:expressionC*}) and (\ref{est: 2seqC'}), we deduce that there exists $A>1$ such that 
\begin{align} \label{e:second} 
& \sup_{s, t \leq T} |\mathcal{J}_{s, t}| \leq A(1+ c_0^{\star}) +  Ac_0^{\star} \leq 2A(1+ c_0^{\star}) .
\end{align}
By plugging (\ref{e:first}) and (\ref{e:second}) into (\ref{e:N}) we get
\begin{eqnarray*} |\psi_{s, t}| \leq 2A(1+ c_0^{\star}) + c(1+ ||B^{\alpha, \beta} +B^{\alpha^{\prime}, \beta^{\prime}} ||_{\infty})
\end{eqnarray*}
for some constant $c>0$. Now, since $c_0^{\star}:=\dfrac{\Gamma(a)\Gamma(b)}{4\Gamma(2a)\Gamma(2b)}||b||_{\infty}$, we deduce by using the linear growth condition of $b$ that there exists $c>0$ such that
\begin{eqnarray} |\psi_{s, t}| \leq  c(1+ ||B^{\alpha, \beta} +B^{\alpha^{\prime}, \beta^{\prime}} ||_{\infty}).
\end{eqnarray}
Therefore,  Novikov's condition holds, which completes the proof of the existence of $u$ and $v$.
\end{Proof}
\begin{center}
\section{Uniqueness in law and Pathwise uniqueness }\label{four}
\end{center}
The results in this section are obtained by adapting the proofs of  \cite{NS22} and  \cite{NO02}  to the double parameter setting where the driving noise is the sum of two fractional Brownian sheets. We have the following statement.
\begin{theorem}\label{thm:uniq in law} 
Let $(\alpha, \beta)$ and $(\alpha', \beta')$ in $(0, 1/2]^{2}$.  We assume that  $(\alpha, \beta)\prec (\alpha^{\prime}, \beta^{\prime})  \preceq (\frac{1}{2}, \frac{1}{2})$  or $(\alpha^{\prime}, \beta^{\prime}) \prec(\alpha, \beta)  \preceq (\frac{1}{2}, \frac{1}{2})$ and that the Borel function $b$ satisfies the linear growth condition \eqref{eq:LG}. Then two weak solutions of Eq. (\ref{eq:1}) have the same distribution.
\end{theorem}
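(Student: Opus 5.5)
The plan is to use the classical Girsanov argument of \cite{NO02} and \cite{NS22}, run in the reverse direction. Let $(B^{\alpha,\beta},B^{\alpha',\beta'},X)$ be a weak solution on $(\Omega,\mathcal{F},\mathbb{P})$, with underlying $\mathbb{F}$-Brownian sheet $W$ as in (\ref{eq:01}). I would show that its law is determined by $x$, $b$ and the parameters. Write $Y_z:=B^{\alpha,\beta}_z+B^{\alpha',\beta'}_z$, so that $X=x+\int_{[0,\cdot]}b(\zeta,X_\zeta)d\zeta+Y$.

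\emph{Step 1: reuse the splitting.} Repeat the construction of Subsection \ref{cons:u and v}, but with $b_{s,t}$ now defined as $b(s,t,X_{s,t})$ instead of $b(s,t,x+B^{\alpha,\beta}_{s,t}+B^{\alpha',\beta'}_{s,t})$. This produces adapted processes $u,v$ with
\[u_z+v_z=-b(z,X_z),\qquad (\mathcal{K}^{\alpha,\beta})^{-1}\Big(\int_{[0,\cdot]}u_\zeta d\zeta\Big)=(\mathcal{K}^{\alpha',\beta'})^{-1}\Big(\int_{[0,\cdot]}v_\zeta d\zeta\Big)=:\psi .\]
The minus sign means the drift is removed rather than added. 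The estimates of Propositions \ref{prop:f_n} and \ref{proposiiton:J_n} are pointwise in $\omega$ and use only a sup-bound on $b_{s,t}$. So the same computation gives $|\psi_z|\le c(1+\|X\|_\infty)$.

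\emph{Step 2: Novikov for $\psi$.} This is the step I expect to be the main obstacle. The bound on $\psi$ is now in terms of $X$, which is not Gaussian. By \eqref{eq:LG} and a two-parameter Gronwall argument applied to the integral equation,
\[\|X\|_\infty\le C\big(1+|x|+\|Y\|_\infty\big).\]
Iterating $|X_z|\le |x|+\|Y\|_\infty+c\int_{[0,z]}(1+|X_\zeta|)d\zeta$ gives a Bessel-type series of the form $\sum_n (c\,st)^n/(n!)^2$. Hence $\int|\psi_z|^2dz\le C(1+\|Y\|_\infty^2)$, and $Y$ is a continuous centered Gaussian sheet.

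Fernique's theorem (\cite{LT91}) only gives $\mathbb{E}\exp(\varepsilon\|Y\|_\infty^2)<\infty$ for small $\varepsilon$, so Novikov cannot be applied directly on all of $[0,T]^2$. I would first try to localize: split $[0,T]^2$ into small rectangles, on each of which the constant in front of $\|Y\|^2_\infty$ is small, and apply Novikov's criterion rectangle by rectangle. The exponential $L$ is multiplicative over such a partition via the filtration properties \eqref{Ff1}--\eqref{Ff4}, which gives $\mathbb{E}[L_T]=1$, following \cite{NO02} p.~109 and \cite{ENO03} p.~128. As a fallback, one can use the stopping-type truncation $\psi\mathbbm{1}_{\{\|Y\|\le N\}}$ and let $N\to\infty$.

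\emph{Step 3: identify the law.} With $\mathbb{E}[L_T]=1$, where $L_T=\exp(\int\psi dW-\frac12\int\psi^2dz)$, Theorem \ref{thm2} applies. Under $d\mathbb{Q}=L_Td\mathbb{P}$ the processes $B^{\alpha,\beta}-\int u$ and $B^{\alpha',\beta'}-\int v$ are fractional Brownian sheets driven by one common Brownian sheet $\tilde W$. Since $u+v=-b(\cdot,X)$, we get $X=x+\tilde B^{\alpha,\beta}+\tilde B^{\alpha',\beta'}$. So under $\mathbb{Q}$, $X-x$ has the fixed law of the sum of two correlated fBs built on one Brownian sheet.

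Then, for bounded measurable $\Phi$,
\[\mathbb{E}_{\mathbb{P}}[\Phi(X)]=\mathbb{E}_{\mathbb{Q}}[\Phi(X)L_T^{-1}].\]
Express $L_T^{-1}=\exp\big(-\int\psi\, d\tilde W-\tfrac12\int\psi^2dz\big)$. Here $\psi$ is an explicit measurable functional of the path $X$, via the series defining $u$ in terms of $b(\cdot,X)$. Likewise $\tilde W$ is a measurable functional of $X$: applying $(\mathcal{K}^{\alpha,\beta}+\mathcal{K}^{\alpha',\beta'})^{-1}$, in the sense of the same Neumann-series inversion, to $X-x$ recovers $\tilde W$. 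Hence the right-hand side is $\mathbb{E}[F(x+B^{\alpha,\beta}+B^{\alpha',\beta'})]$ for a fixed functional $F$ that does not depend on the chosen solution. Therefore any two weak solutions have the same distribution.
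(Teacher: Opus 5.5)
Your proposal follows the paper's proof: the same $u,v$ built from $b(\cdot,X)$, the Gronwall bound $\|X\|_\infty\le C(1+|x|+\|B^{\alpha,\beta}\|_\infty+\|B^{\alpha',\beta'}\|_\infty)$ giving $|\psi|\le c(1+\|X\|_\infty)$, Girsanov via Theorem \ref{thm2}, and rewriting $\mathbb{E}[\Phi(X-x)]$ as a fixed functional of a reference Brownian sheet. Your sign convention ($u+v=-b$ with $L_T=\exp(\int\psi\,dW-\frac12\int\psi^2)$) is equivalent to the paper's ($u+v=b$ with density $\exp(-\int\psi\,dW-\frac12\int\psi^2)$), and your localization for Novikov, done most cleanly in strips $[s_i,s_{i+1}]\times[0,T]$ with the one-parameter filtration $\mathcal{F}_{(s,T)}$, fills in a step the paper only asserts; the claim that $\tilde W$ is a functional of $X$ is unnecessary, since under $\mathbb{Q}$ both $X$ and $\psi$ are functionals of $\tilde W$, which is how the paper concludes with $\mathbb{E}[\Phi(B^{\alpha,\beta}+B^{\alpha',\beta'})\exp(\int\phi\,dW-\frac12\int\phi^2)]$.
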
 
\begin{Proof}  We only give the proof in the case $(\alpha, \beta)\prec (\alpha^{\prime}, \beta^{\prime}) \preceq (\dfrac{1}{2}, \dfrac{1}{2})$. The idea of proof in all remaining cases is essentially the same.
Let $( X, B^{\alpha, \beta}, B^{\alpha^{\prime}, \beta^{\prime}})$ be a weak solution of Eq. (\ref{eq:1}) defined on a filtered probability space $(\Omega, \mathcal{F}, (\mathcal{F}_z)_{z\in [0, T]^2}, \mathbb{P})$ with underlying Brownian sheet $(W_z)_{z\in [0, T]^2}$. With the notations of Subsection \ref{cons:u and v}, we define the two processes $u$, $v$ by
\begin{equation}\label{e:u unicity}
s^{-a^{\prime}}t^{-b^{\prime}} \left[I +D^{a^{\prime}, b^{\prime}}_{0^{+}} s^{a^{\prime}-a}t^{b^{\prime}-b} I^{a, b}_{0^{+}}(s^{a-a^{\prime}}t^{b-b^{\prime}} \cdot) \right]^{-1}(s^{a^{\prime}}t^{b^{\prime}}b(s, t, X_{s, t}))
\end{equation}
and 
  \begin{equation}\label{e:v unicity}
v_{s, t} = s^{-a^{\prime}}t^{-b^{\prime}} D^{a^{\prime}, b^{\prime}}_{0^{+}} \left(s^{a^{\prime}-a}t^{b^{\prime}-b} I^{a, b}_{0^{+}} (s^{a}t^{b} u_{s, t})\right),
 \end{equation}
for every  $z=(s, t) \in (0, T]^2$. Notice  that  we have  $u_{s, t}+ v_{s, t}= b(s, t, X_{s, t})$ and 
\begin{equation}
(\mathcal{K}^{\alpha, \beta})^{-1} \left( \int_{[0, \cdot]} u_{\zeta} d \zeta \right)(z)= (\mathcal{K}^{\alpha^{\prime}, \beta^{\prime}})^{-1}  \left( \int_{[0, \cdot]} v_{\zeta} d \zeta \right)(z)=:\psi_z,
\end{equation}
for all $z=(s, t) \in (0, T]^2$. Let $\mathbb{Q}$ be the probability measure  on $\mathcal{F}_{(T, T)}$ defined by 
\[
\left.\dfrac{d\mathbb{Q}}{d\mathbb{P}}\right|_{\mathcal{F}_{(T, T)}}=\exp \left( - \int_{[0, T]^2}\psi_zdW_z -\dfrac{1}{2}  \int_{[0, T]^2} \psi_z^2 dz \right).\]
The processes $u$ and $v$ are both $(\mathcal{F}_{z})$-adapted and by construction satisfy the conditions of Theorem \ref{thm2}. In fact, arguing as in \cite{ENO03} we obtain by Gronwall's lemma for integrals in the plane that
\begin{eqnarray*}
|| X||_{\infty}\leq (|x| + || B^{\alpha, \beta}||_{\infty} +|| B^{\alpha, \beta}||_{\infty} +cT)e^{cT^2} .
\end{eqnarray*}
Hence, 
\begin{eqnarray*}
|\psi_{s, t}| \leq c(1+ || X||_{\infty})\leq c (1 + || B^{\alpha, \beta}||_{\infty} +|| B^{\alpha, \beta}||_{\infty} )
\end{eqnarray*}
and then Novikov's condition is fulfilled. Therefore by Theorem \ref{thm2}, the process $(\tilde{W}_z)$ defined by 
\[\tilde{W}_z:= W_z +\int_{[0, z]} \psi_{\xi}\, d \xi, \qquad \mbox{for all}\, \,  z\in [0, T]^2\]
is a standard Brownian sheet under the probability measure $\mathbb{Q}$. Next, note that we have, for all $z\in [0, T]^2$,
\begin{align}
 X_z&= x + B^{\alpha, \beta}_z + B^{\alpha', \beta'}_z + \int_{[0, z]} b(\zeta, X_{\zeta})\, d \zeta\nonumber\\
&= x+ \int_{[0, z]} K^{\alpha, \beta}(z, \zeta)d\tilde{W}_{\zeta} + \int_{[0, z]} K^{\alpha', \beta'}(z, \zeta)d\tilde{W}_{\zeta}\notag\\
&=: x+  \tilde{B}^{\alpha, \beta}_z +  \tilde{B}^{\alpha', \beta'}_z.
\end{align}
Thus $X-x$ is the sum of two fractional Brownian sheets with Hurst parameters respectively $(\alpha, \beta)$ and $(\alpha', \beta')$. As a consequence, for every measurable bounded functional $\Phi: \mathcal{C(\mathbb{R})} \longrightarrow \mathbb{R}_{+}$, we get
\begin{align} \label{e:law of X}
& \mathbb{E} [ \Phi(X-x)]= \mathbb{E}_{\mathbb{Q}}\left[\Phi(\tilde{B}^{\alpha, \beta}+\tilde{B}^{\alpha', \beta'} ) \exp\left(  \int_{[0, z]}  \psi_z d \tilde{W}_z -\frac{1}{2}  \int_{[0, z]}  \psi_z^2 dz \right)\right].
\end{align}

From (\ref{e:u unicity}), (\ref{e:v unicity}) and (\ref{e:law of X}),  $u$ and $v$ can be written in terms  of  $\tilde{B}^{\alpha, \beta}$ and  $\tilde{B}^{\alpha', \beta'}$ and consequently the same holds for $\psi$. Therefore, (\ref{e:law of X}) becomes
\begin{align} 
& \mathbb{E} [ \Phi(X-x)]= \mathbb{E}\left[\Phi(B^{\alpha, \beta}+B^{\alpha', \beta'} ) \exp\left(  \int_{[0, z]}  \phi_z d W_z -\frac{1}{2}  \int_{[0, z]}  \phi^2_z dz \right)\right],
\end{align}
with \[\phi_z:= (\mathcal{K}^{\alpha, \beta})^{-1} \left( \int_{[0, \cdot]} l_{\zeta} d \zeta \right)(z)\]
and 
\begin{equation*}
l_{s, t}:=s^{-a^{\prime}}t^{-b^{\prime}} \left[I +D^{a^{\prime}, b^{\prime}}_{0^{+}} s^{a^{\prime}-a}t^{b^{\prime}-b} I^{a, b}_{0^{+}}(s^{a-a^{\prime}}t^{b-b^{\prime}} \cdot) \right]^{-1}(s^{a^{\prime}}t^{b^{\prime}}b(s, t, x+ B^{\alpha, \beta}_{s, t} + B^{\alpha', \beta'}_{s, t})).
\end{equation*}
Thus, we obtain the uniqueness in law.
\end{Proof} 
As a corollary of Theorem \ref{thm:uniq in law}, we obtain the pathwise uniqueness property for solutions of Eq. (\ref{eq:1})
\begin{corollary}\label{cor:1} Suppose the assumptions of Theorem \ref{thm:uniq in law} are satisfied. Then two weak solutions defined on the same probability space coincide almost surely. 
\end{corollary}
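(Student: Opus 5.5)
The plan is the standard trick of Nualart--Ouknine \cite{NO02}, used there for one-parameter equations: pathwise uniqueness follows from uniqueness in law applied to the pair of solutions via their maximum and minimum. Let $X^1$ and $X^2$ be two weak solutions on the same filtered probability space $(\Omega,\mathcal{F},(\mathcal{F}_z),\mathbb{P})$. They are driven by the same Brownian sheet $W$, hence by the same $B^{\alpha,\beta}$ and $B^{\alpha',\beta'}$. Set $Y:=X^1\vee X^2$ and $Z:=X^1\wedge X^2$. The goal is to show that $Y$ and $Z$ are again weak solutions of Eq. (\ref{eq:1}) with the same noise. Theorem \ref{thm:uniq in law} then gives $Y\overset{d}{=}Z$. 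Since $Y\ge Z$ pointwise, $\mathbb{E}[\arctan Y_z-\arctan Z_z]=0$ for every $z$, so $Y_z=Z_z$ a.s. By continuity of paths this holds for all $z$ simultaneously, which gives $X^1=X^2$ a.s.

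The key step is to check that $Y$ solves the equation. Write
\[
X^1_z-X^2_z=\int_{[0,z]}\big(b(\zeta,X^1_\zeta)-b(\zeta,X^2_\zeta)\big)d\zeta=:F(z).
\]
The noises cancel, so $F$ is a continuous function of bounded variation in the plane, vanishing on the axes, with density $g(\zeta):=b(\zeta,X^1_\zeta)-b(\zeta,X^2_\zeta)$. Then $Y=X^2+F^+$.

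\textbf{Main obstacle.} In one parameter one uses $(F^+)'=\mathbbm{1}_{\{F>0\}}F'$ a.e. Here one needs instead
\[
F^+(z)=\int_{[0,z]}\mathbbm{1}_{\{X^1_\zeta>X^2_\zeta\}}\,g(\zeta)\,d\zeta,
\]
which is a statement about the mixed derivative $\partial_s\partial_t F^+$. This is not automatic in two parameters. I would first try to reduce it line by line: for fixed $t$, $s\mapsto F(s,t)$ is absolutely continuous, and on the open set $\{F>0\}$ one has $\partial_s\partial_t F^+=g$. On $\{F<0\}$ one has $\partial_s\partial_t F^+=0$.

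The difficulty is the zero set $\{F=0\}$. On it $X^1=X^2$, hence $g=0$, so the desired density $\mathbbm{1}_{\{X^1>X^2\}}g$ vanishes there. What remains is to show that $F^+$ carries no singular rectangular increments concentrated on $\{F=0\}$.

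If this fails in general, the fallback is to use the structure of the solutions. Under \eqref{eq:LG} the Girsanov representation in the proof of Theorem \ref{thm:uniq in law} shows that each $X^i$ has the law of $x+$(sum of two fBs) under an equivalent measure. One could then aim to show that the random set $\{F=0\}$ is handled by the rectangle-increment version of Tanaka's argument.

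Once $Y$ is shown to be a solution, $Z=X^1+X^2-Y$ is also a solution, because the equation is linear in the drift integral. The argument also uses the following facts about $Y$ and $Z$:

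\begin{itemize}
\item They are $(\mathcal{F}_z)$-adapted.
\item They are continuous.
\item They satisfy the linear-growth bound used for Novikov's condition.
\end{itemize}

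Together these make Theorem \ref{thm:uniq in law} applicable and close the argument.
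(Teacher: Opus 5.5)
Your final step is fine. If $Y=X^1\vee X^2$ and $Z=X^1\wedge X^2$ were weak solutions driven by the same $W$, then uniqueness in law, $Y\ge Z$ and path continuity would give $X^1=X^2$.

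\textbf{The gap.} Everything rests on the step you flag yourself and leave open: that $Y$ solves \eqref{eq:1}, i.e.\ that $F^+(z)=\int_{[0,z]}\mathbbm{1}_{\{F>0\}}g\,d\zeta$ for $F=X^1-X^2$. This cannot be derived from the properties you plan to use ($F=\int_{[0,\cdot]}g$, $F=0$ on the axes, $g=0$ on $\{F=0\}$), because under them it is false.

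\textbf{Why it fails.} For the mixed derivative the chain rule has a second-order term. Formally,
$\partial_s\partial_t(F^+)=\mathbbm{1}_{\{F>0\}}\partial_s\partial_tF+\delta_0(F)\,\partial_sF\,\partial_tF$.
So $F^+$ carries a singular part on the level curve $\{F=0\}$ with weight
$\partial_sF(s,t)\,\partial_tF(s,t)=\int_0^t g(s,v)\,dv\int_0^s g(u,t)\,du$.
These line integrals need not vanish where $g$ does.

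\textbf{Explicit example.} Take $F(s,t)=st(s-t)$, so $g=2(s-t)$, which vanishes on $\{F=0\}\cap(0,1]^2=\{s=t\}$. Yet $F^+(1,1)=0$, while $\int_{[0,1]^2}\mathbbm{1}_{\{F>0\}}g\,d\zeta=\int_0^1\int_0^s 2(s-t)\,dt\,ds=1/3$. The discrepancy is exactly the singular term, since $\partial_sF\,\partial_tF=-s^4$ on the diagonal.

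This example is realized by actual solutions. Use the bounded Borel drift $b(s,t,y)=g(s,t)\mathbbm{1}_{\{y=F(s,t)\neq 0\}}$ on $[0,1]^2$. Then $u\equiv 0$ and $u=F$ both solve $u(z)=\int_{[0,z]}b(\zeta,u(\zeta))\,d\zeta$, but $u=F^+$ does not.

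So closure of the solution set under $\vee,\wedge$ fails for hyperbolic equations in the plane. In one parameter it is free, because $(F^+)'=\mathbbm{1}_{\{F>0\}}F'$ for every absolutely continuous $F$. A Tanaka formula for rectangular increments would reproduce exactly this extra term, not remove it.

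\textbf{Your fallback.} It does not touch the problem. The Girsanov representation identifies the law of each $X^i$ separately. What governs how $X^1-X^2$ crosses zero is the joint behaviour of $(X^1,X^2)$, and Girsanov says nothing about that.

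\textbf{The paper.} It states the corollary with no argument beyond invoking the uniqueness-in-law theorem, so it does not supply this step either.

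\textbf{A route avoiding the lattice property.} The Girsanov computation in the proof of uniqueness in law determines the joint law of $(X,W)$. Under $\mathbb{Q}$, the quantities $X$, $W=\tilde W-\int_{[0,\cdot]}\psi_\xi\,d\xi$ and $d\mathbb{P}/d\mathbb{Q}$ are all functionals of the Brownian sheet $\tilde W$. Joint uniqueness in law plus existence of one strong solution then gives pathwise uniqueness, by the usual Cherny-type argument. However, this needs strong existence. The paper only addresses that for bounded nondecreasing $b$, not under the linear growth hypothesis of the corollary.
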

\begin{center}
\section{Existence of Strong solutions}\label{five}
\end{center}
In this section, we prove the existence and uniqueness of a strong solution of Eq. (\ref{eq:1}).  In view of Corollary \ref{cor:1} it suffices to show the existence of a strong solution. We first establish  a Krylov-type inequality that plays a prominent role in the proof of the existence.
\begin{prop}  \label{prop:Krylov}
Let $(\alpha, \beta)$ and $(\alpha', \beta')$ be in $(0, 1/2]^{2}$ such that $(\alpha, \beta)\prec (\alpha^{\prime}, \beta^{\prime}) \preceq (\frac{1}{2}, \frac{1}{2})$.  We suppose that the Borel function $b$  is uniformly bounded. If $X$ is a weak solution of Eq. (\ref{eq:1}) and $\rho > 1 +\max( \alpha, \beta)$, then there exists a constant $C>0$ depending only on $T$, $||b||_{\infty}$, $\rho$  and not on $X$, such that for every bounded measurable function $g: [0, T]^2 \times \mathbb{R} \longrightarrow \mathbb{R}_{+}$ we have
\begin{align*}
&\E\left[  \int_{[0, T]^2}g(\zeta, X_{\zeta}) d\zeta\right] \leq C \left(\int_{[0, T]^2} \int_{\mathbb{R}} g^{\rho}( \zeta, y) dy d \zeta \right)^{1/\rho}.
\end{align*}
\end{prop}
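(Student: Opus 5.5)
The plan is to remove the drift by the Girsanov transformation used in the proof of Theorem \ref{thm:uniq in law}, and then to use an explicit Gaussian density. Let $X$ be a weak solution with underlying Brownian sheet $W$. Define $u,v$ by (\ref{e:u unicity})--(\ref{e:v unicity}) with $b(s,t,X_{s,t})$, and let $\psi$ be the common value $(\mathcal{K}^{\alpha,\beta})^{-1}(\int_{[0,\cdot]}u_\zeta d\zeta)=(\mathcal{K}^{\alpha',\beta'})^{-1}(\int_{[0,\cdot]}v_\zeta d\zeta)$. Since $b$ is now bounded, the series estimates of Propositions \ref{prop:f_n} and \ref{proposiiton:J_n} give a \emph{deterministic} bound $\sup_{z}|\psi_z|\le M$, where $M$ depends only on $\|b\|_\infty$, $T$ and the Hurst parameters. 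Under $\mathbb{Q}$, with $d\mathbb{Q}/d\mathbb{P}=\exp(-\int\psi\,dW-\frac12\int\psi^2dz)$, the process $X-x=\tilde B^{\alpha,\beta}+\tilde B^{\alpha',\beta'}$ is a sum of two fractional Brownian sheets driven by the same $\mathbb{Q}$-Brownian sheet $\tilde W$.

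Write $Z:=d\mathbb{P}/d\mathbb{Q}=\exp(\int\psi\,d\tilde W-\frac12\int\psi^2dz)$. By H\"older's inequality with exponents $\rho$ and $\rho'=\rho/(\rho-1)$,
\[
\E\Big[\int_{[0,T]^2}g(\zeta,X_\zeta)d\zeta\Big]=\E_{\mathbb{Q}}\Big[Z\int g(\zeta,X_\zeta)d\zeta\Big]\le \big(\E_{\mathbb{Q}}[Z^{\rho'}]\big)^{1/\rho'}\Big(\E_{\mathbb{Q}}\Big[\Big(\int g(\zeta,X_\zeta)d\zeta\Big)^{\rho}\Big]\Big)^{1/\rho}.
\]
The first factor is controlled by $|\psi|\le M$. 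One writes $Z^{\rho'}$ as the exponential martingale of $\rho'\psi$ times $\exp(\frac{\rho'^2-\rho'}{2}\int\psi^2)$, which gives $\E_{\mathbb{Q}}[Z^{\rho'}]\le e^{(\rho'^2-\rho')M^2T^2/2}$, a constant independent of $X$.

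For the second factor, apply Jensen's inequality on $[0,T]^2$ to get $(\int g\,d\zeta)^\rho\le T^{2(\rho-1)}\int g^\rho\,d\zeta$. By Fubini,
\[
\E_{\mathbb{Q}}\Big[\int g^\rho(\zeta,X_\zeta)d\zeta\Big]=\int_{[0,T]^2}\int_{\mathbb{R}}g^\rho(\zeta,y)\,p_\zeta(y-x)\,dy\,d\zeta,
\]
where $p_\zeta$ is the centered Gaussian density with variance
\[
\sigma^2(\zeta)=\int_{[0,\zeta]}\big(K^{\alpha,\beta}+K^{\alpha',\beta'}\big)^2(\zeta,\eta)\,d\eta .
\]
Then $p_\zeta(y)\le(2\pi\sigma^2(\zeta))^{-1/2}$.

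The main obstacle is the lower bound on $\sigma^2(\zeta)$, because of the correlation between the two sheets and the degeneracy on the axes. The kernels $K^{\alpha}$ are positive for $\alpha\le1/2$, so the cross term is nonnegative and $\sigma^2(s,t)\ge s^{2\alpha}t^{2\beta}$. Therefore
\[
\int g^\rho\,p\,dy\,d\zeta\le c\int_{[0,T]^2}s^{-\alpha}t^{-\beta}\int_{\mathbb{R}}g^\rho\,dy\,d\zeta .
\]
The singular weight $s^{-\alpha}t^{-\beta}$ cannot simply be bounded by a constant. I would try to absorb it by a second H\"older step in $\zeta$ alone: keep the power $\rho$ on the $y$-integral, and put a conjugate power on the weight. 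The condition $\rho>1+\max(\alpha,\beta)$ is exactly what should make $s^{-\alpha}t^{-\beta}$ integrable with the required exponent. Concretely, one uses $\E_{\mathbb{Q}}\int g\,d\zeta\le\int\|g(\zeta,\cdot)\|_{\rho}\,\|p_\zeta\|_{\rho'}\,d\zeta$ with $\|p_\zeta\|_{\rho'}\le c\,\sigma(\zeta)^{-1/\rho}$, followed by H\"older in $\zeta$. The integrability condition then reads $(\alpha+\beta)\rho'/\rho<1$, or in the per-variable form $\alpha\rho'/\rho<1$; I would check that the hypothesis on $\rho$ delivers this, adjusting the order of the Jensen and H\"older steps if needed. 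Collecting the constants gives the claimed $C=C(T,\|b\|_\infty,\rho)$.
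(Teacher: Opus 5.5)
Your architecture is the paper's: a deterministic bound on $\psi$, H\"older against the Radon--Nikodym density, Jensen, the explicit Gaussian density, and a second H\"older step to absorb the degeneracy of $\sigma$ on the axes. However, the exponent bookkeeping at the end has a genuine gap. Having spent the full exponent $\rho$ on the density, you are left with $\E_{\mathbb{Q}}\int_{[0,T]^2}g^{\rho}(\zeta,X_\zeta)\,d\zeta$. No bound of the form $C\int\!\!\int g^{\rho}\,dy\,d\zeta$ exists for this quantity, since that would require $\sup_{\zeta,y}p_\zeta(y)<\infty$. For instance, for $h=\mathbf{1}_{\{s\le\epsilon,\ |y-x|\le\epsilon^{\alpha}\}}$, $\E_{\mathbb{Q}}\int h\,d\zeta$ is of order $\epsilon$ but $\int\!\!\int h\,dy\,d\zeta$ is of order $\epsilon^{1+\alpha}$. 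Your proposed repair, $\E_{\mathbb{Q}}\int g\,d\zeta\le\int\|g(\zeta,\cdot)\|_{\rho}\|p_\zeta\|_{\rho'}\,d\zeta$, is a correct estimate of the wrong quantity. It bounds the $\mathbb{Q}$-expectation of $\int g$, whereas once the density is removed you must bound a $\mathbb{Q}$-moment of order strictly greater than $1$ of $\int g$. Reordering Jensen and H\"older does not bridge this.

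The missing idea is to split the exponent, which is exactly where the strictness of $\rho>1+\max(\alpha,\beta)$ enters. Choose $\varrho>1$ with $q:=\rho/\varrho>1+\max(\alpha,\beta)$. Apply H\"older to $Z\int g\,d\zeta$ with exponent $\varrho'$ on $Z$ and $\varrho$ on $\int g$; the factor $\E_{\mathbb{Q}}[Z^{\varrho'}]$ is bounded exactly as you do. Use Jensen to reach $\E_{\mathbb{Q}}\int g^{\varrho}(\zeta,X_\zeta)\,d\zeta$, and only then apply your mixed-norm H\"older to $g^{\varrho}$ with exponents $q,q'$:
\[
\int_{[0,T]^2}\int_{\mathbb{R}} g^{\varrho}(\zeta,y)\,p_\zeta(y-x)\,dy\,d\zeta\le\Big(\int_{[0,T]^2}\int_{\mathbb{R}} g^{\rho}\,dy\,d\zeta\Big)^{1/q}\Big(c\int_{[0,T]^2}\sigma(\zeta)^{1-q'}\,d\zeta\Big)^{1/q'}.
\]
Raising to the power $1/\varrho$ yields $(\int\!\!\int g^{\rho})^{1/\rho}$. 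Since $1-q'=-1/(q-1)$ and $\sigma\ge s^{\alpha}t^{\beta}$, the last integral is dominated by $\int s^{-\alpha/(q-1)}t^{-\beta/(q-1)}\,ds\,dt$, which is finite iff $\max(\alpha,\beta)<q-1$. The condition is per variable because the weight factorizes, so your first version $(\alpha+\beta)\rho'/\rho<1$ is not the right one.

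This is precisely the paper's proof, with $\varrho=\rho/\delta$ and $\rho>\delta>1+\max(\alpha,\beta)$, a joint H\"older in $(z,y)$ with exponents $\delta,\gamma$, and $\int\sigma^{1-\gamma}<\infty$. One point in your favour: your global bound $\sigma^2(s,t)\ge s^{2\alpha}t^{2\beta}$, from positivity of the kernels for Hurst indices $\le 1/2$, is cleaner than the paper's Lemma \ref{lem: strong}. That lemma only gives two-sided bounds on $(0,\varepsilon)^2$ and leaves the rest of $[0,T]^2$ implicit.
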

\begin{Proof} We write $b= u + v$ and define $\psi$ as in the proof of Theorem \ref{thm2} (see, Subsection \ref{existence of u and v}). Let
\[\ \Theta:= \exp\left(  - \int_{[0, T]^2}\psi_{s, t}dW_{s, t} -\dfrac{1}{2}  \int_{[0, T]^2} \psi_{s, t}^2 ds dt \right) \]
and consider the probability measure $\tilde{\P}$ defined by $\dfrac{d\tilde{\P}}{d\P}= \Theta$. Recall that we have $$|\psi_{s, t}| \leq c(1 + || b||_{\infty}) \leq M.$$
Then, Novikov's condition is satisfied and $\Theta$ is a true martingale. Hence, for all $\alpha >1$, 
\begin{align*}
& \E [ \Theta^{-\alpha}]= \E \left[ \exp\left(  \alpha \int_{[0, T]^2}\psi_{s, t}dW_{s, t} -\dfrac{\alpha^2 }{2}  \int_{[0, T]^2} \psi_{s, t}^2 ds dt \right) \right]\\
&\qquad \qquad \times \exp\left( \dfrac{\alpha(\alpha +1)}{2}\int_{[0, T]^2}\psi_{s, t}^{2}ds dt \right)\\
&\leq e\exp\left( {\dfrac{M^2 T^2 \alpha (\alpha +1)}{2}} \right)=:C.
\end{align*}
On the other hand, in view of H\"{o}lder inequality we have for $\varrho>1$ such that $\alpha^{-1} + \varrho^{-1}=1$
\begin{align}\label{est:int g}
& \E\left[  \int_{[0, T]^2}g(\zeta, X_{\zeta}) d\zeta\right]=\tilde{\mathbb{E}}\left[ \Theta^{-1}\int_{[0, T]^2}g(\zeta, X_{\zeta}) d\zeta  \right]\notag\\ 
& \leq \left(\tilde{\E}[\Theta^{-\alpha}]\right)^{1/\alpha} \left(\tilde{\E}\left[ \int_{[0, T]^2}g(\zeta, X_{\zeta}) d\zeta\right]^{\varrho} \right)^{1/\varrho}\notag\\
& \leq C\left( \tilde{\E}\left[ \int_{[0, T]^2}g^{\varrho}(\zeta, X_{\zeta}) d\zeta\right] \right)^{1/\varrho},
\end{align}
where we have used Jensen's inequality in the last line. 

Next, using Theorem \ref{thm2}, there exists a  standard Brownian sheet $\tilde{W}$ under $\tilde{\P}$ such that, for all $z\in [0, T]^2$
\begin{align*}
&X_z= x +  \int_{[0, z]} K^{\alpha, \beta}(z, \zeta)d\tilde{W}_{\zeta} + \int_{[0, z]} K^{\alpha', \beta'}(z, \zeta)d\tilde{W}_{\zeta}\notag\\
&= x + \int_{[0, z]} \left( K^{\alpha, \beta}(z, \zeta) + K^{\alpha', \beta'}(z, \zeta)\right)d\tilde{W}_{\zeta}.
\end{align*}
Then, under $\tilde{\P}$, $X_z-x$ is a Gaussian random variable with variance 
\begin{align}\label{e:sigma}
&\sigma^2(z);= \int_{[0, z]}   \left( K^{\alpha, \beta}(z, \zeta) + K^{\alpha', \beta'}(z, \zeta)\right)^2 d \zeta\notag\\
&= \int_{0}^{s}\int_{0}^{t} \left( K^{\alpha, \beta}((s, t), (u,v )) + K^{\alpha', \beta'}((s, t), (u,v ))\right)^2 du dv,
\end{align}
for all $z:=(s, t) \in [0, T]^2.$ Hence
\[ \tilde{\E}\left(\left( \int_{[0, T]^2}g^{\varrho}(\zeta, X_{\zeta}) d\zeta\right) \right)= \dfrac{1}{\sqrt{2\pi }} \int_{[0, T]^2}  \int_{\mathbb{R}} \dfrac{ g(z, x +y)^{\varrho}}{\sigma^2(z)}\exp\left(-\dfrac{y^2}{2\sigma^2(z)}\right) \, dy\, dz\]
\noindent Now, since $\rho > 1 +\max( \alpha, \beta)$, we choose  $\delta >1$ so that  $\rho> \delta > 1 +\max( \alpha, \beta)$. Notice that the conjugate $\gamma$ of $\delta$ satisfies $1<\gamma< 1 + \min(\frac{1}{\alpha}, \frac{1}{\beta})$. Thus from Lemma \ref{lem: strong}, we have
\begin{align}
&\int_{[0, T]^2} \sigma
^{1-\gamma}(z)dz <+\infty.
\end{align}
 By using H\"{o}lder inequality, we  get
\begin{align}\label{est: ud}
& \qquad \tilde{\E}\left(\int_{[0, T]^2}g^{\varrho}(\zeta, X_{\zeta}) d\zeta\right)\notag\\
& \leq   \dfrac{1}{\sqrt{2\pi }} \left(\int_{[0, T]^2}  \int_{\mathbb{R}}g^{\varrho \delta}(z, x +y ) dy dz\right)^{1/\delta}   \left(\int_{[0, T]^2}  \int_{\mathbb{R}} \sigma
^{-\gamma}(z) \exp\left(-\dfrac{\gamma y^2}{2\sigma^2(z)}\right) \right)^{1/\gamma} \notag\\
&= \dfrac{1}{\sqrt{2\pi }}  \left(\int_{[0, T]^2}  \int_{\mathbb{R}}g^{\varrho \delta}(z, x +y ) dy dz\right)^{1/\delta} \left(\int_{[0, T]^2} \sigma
^{1-\gamma}(z)dz \right)^{1/\gamma}\notag\\
&= C \left(\int_{[0, T]^2}  \int_{\mathbb{R}}g^{\varrho \delta}(z, x +y ) dy dz\right)^{1/\delta} 
\end{align}
Consequently, combining  (\ref{est:int g}) and (\ref{est: ud}) we get 
\[ \E\left[  \int_{[0, T]^2}g(\zeta, X_{\zeta}) d\zeta\right]\leq  C \left(\int_{[0, T]^2}  \int_{\mathbb{R}}g^{\varrho \delta}(z, x +y ) dy dz\right)^{1/\delta}. 
\]
Since $\varrho>1$ is arbitrary, we may choose $\varrho= \rho/ \delta>1$. The conclusion  of Proposition \ref{prop:Krylov} then follows.
\end{Proof} 
\begin{theorem}\label{thm:uniq in law} 
Let $(\alpha, \beta)$ and $(\alpha', \beta')$ be in $(0, 1/2]^{2}$.  We assume that  $(\alpha, \beta)\prec (\alpha^{\prime}, \beta^{\prime})  \preceq (\frac{1}{2}, \frac{1}{2})$  or $(\alpha^{\prime}, \beta^{\prime}) \prec(\alpha, \beta)  \preceq (\frac{1}{2}, \frac{1}{2})$ and that the Borel function $b$ satisfies the following two conditions:
\begin{itemize}
\item[(i)] $ \sup_{z\in [0, T]^2} \sup_{x \in \mathbb{R}} |b(z, x)| \leq M < \infty;$
\item[(ii)]  $x \longmapsto b(z, x)$ is a nondecreasing function for all $z \in [0, T]^2$.
\end{itemize}
Then,  Eq. (\ref{eq:1})  has a unique strong solution.
\end{theorem}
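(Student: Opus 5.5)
Since pathwise uniqueness for weak solutions already follows from Corollary \ref{cor:1}, a Yamada--Watanabe type argument means it is enough to build one strong solution, i.e. a solution adapted to the filtration of the driving sheet $W$. We follow the monotone approximation scheme of \cite{NO02} and \cite{ENO03}, with Proposition \ref{prop:Krylov} as the main input.

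\textbf{Step 1: smooth approximations.} Mollify $b$ in the space variable and correct it so that it stays monotone. We construct functions $b_n:[0,T]^2\times\mathbb{R}\to\mathbb{R}$ with the following properties:
\begin{itemize}
\item each $b_n$ is Lipschitz in $x$ and satisfies $|b_n|\le M$;
\item each $b_n(z,\cdot)$ is nondecreasing;
\item the sequence $(b_n)$ is monotone in $n$, say nonincreasing;
\item $b_n(z,x)\to b(z,x)$ for a.e. $x$, or at least in $L^1_{loc}$.
\end{itemize}
This is possible because $b(z,\cdot)$ is bounded and nondecreasing: one can take lower or upper semicontinuous envelopes, or the sup-convolutions $b_n(z,x)=\sup_y\{b(z,y)-n|x-y|\}$.

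For each $n$ the equation
\[
X^n_z=x+\int_{[0,z]}b_n(\zeta,X^n_\zeta)\,d\zeta+B^{\alpha,\beta}_z+B^{\alpha',\beta'}_z
\]
has a unique strong solution, obtained by Picard iteration in the plane. Since the noise is a fixed continuous path, this is pathwise a deterministic Volterra equation with Lipschitz drift.

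\textbf{Step 2: comparison and a pathwise limit.} Next we prove a comparison principle for the hyperbolic integral equation. If $b_n\ge b_{n+1}$ and both are nondecreasing, then $X^n_z\ge X^{n+1}_z$ for all $z$, almost surely. To see this, set $Y=X^n-X^{n+1}$. It satisfies
\[
Y_z=\int_{[0,z]}\big(b_n(\zeta,X^n_\zeta)-b_n(\zeta,X^{n+1}_\zeta)\big)\,d\zeta+\int_{[0,z]}\big(b_n-b_{n+1}\big)(\zeta,X^{n+1}_\zeta)\,d\zeta .
\]
The second integrand is nonnegative, and the first is $c_\zeta Y_\zeta$ with $c_\zeta\ge0$ bounded. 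A plane Picard/Gronwall argument then gives $Y\ge0$. The uniform bound $|X^n_z|\le |x|+MT^2+\|B^{\alpha,\beta}+B^{\alpha',\beta'}\|_\infty$ holds, so $X^n\downarrow X$ pointwise. The limit $X$ is adapted to $\mathcal{F}^W$, and $X^n\to X$ in $L^p$ by dominated convergence.

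\textbf{Step 3: identification of the drift.} We must show that
\[
\int_{[0,z]}b_n(\zeta,X^n_\zeta)\,d\zeta\to\int_{[0,z]}b(\zeta,X_\zeta)\,d\zeta
\]
in $L^1(\mathbb{P})$. This is the main obstacle, since $b$ is discontinuous. We split the error as
\[
|b_n(X^n)-b(X)|\le |b_n(X^n)-b_k(X^n)|+|b_k(X^n)-b_k(X)|+|b_k(X)-b(X)|,
\]
for fixed $k$ and $n\ge k$. The middle term vanishes as $n\to\infty$ because $b_k$ is continuous.

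For the first and third terms we use Proposition \ref{prop:Krylov}. Its constant depends only on $T$, $M$ and $\rho$, so it applies uniformly to every $X^n$ and, once the limit is known to be a solution, to $X$ itself. We apply it with $g=|b_n-b_k|\mathbbm{1}_{\{|y|\le R\}}$, after localizing to $\|X\|_\infty\le R$, which costs a probability that is small uniformly in $n$. This gives
\[
\E\int_{[0,T]^2}|b_n-b_k|(\zeta,X^n_\zeta)\,d\zeta\le C\Big(\int_{[0,T]^2}\int_{-R}^{R}|b_n-b_k|^\rho\,dy\,d\zeta\Big)^{1/\rho},
\]
and the right-hand side tends to $0$ as $n,k\to\infty$ by dominated convergence.

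For the term in $X$ there is a circularity: Proposition \ref{prop:Krylov} requires $X$ to already be a solution. We resolve it as follows. Lemma \ref{lem: strong} and the Girsanov change of measure show that each marginal $X^n_z$ has a density with $L^{\rho'}$ bounds uniform in $n$, and these bounds pass to the limit $X_z$ by Fatou's lemma. This yields the same Krylov estimate for $X$ directly.

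Letting $n\to\infty$ and then $k\to\infty$ shows that $X$ solves Eq. (\ref{eq:1}). It is $\mathcal{F}^W$-adapted, so it is a strong solution. Uniqueness follows from Corollary \ref{cor:1}.
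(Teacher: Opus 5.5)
Your proposal is correct and follows essentially the same route as the paper. The paper's proof just invokes the comparison-plus-Krylov argument of Lemmas 10, 12 and Theorem 2 in \cite{ENO03}: Lipschitz approximations that are nondecreasing in $x$, a comparison principle in the plane that needs the drift to be nondecreasing, and the $n$-uniform Krylov bound of Proposition \ref{prop:Krylov} to identify the limit, with uniqueness taken from Corollary \ref{cor:1}. You supply more detail than the paper does, including the positivity-preserving Picard argument for the comparison and a valid way around the circularity of applying the Krylov estimate to the limit $X$; there, Fatou works when applied to open sets, $\mathbb{P}(X_z\in U)\le\liminf_n\mathbb{P}(X^n_z\in U)$, followed by outer regularity.
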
 
\begin{Proof} 
Following the line of reasoning in Lemmas $10 $ and $12$ and Theorem $2$ of \cite{ENO03}, the desired conclusion follows essentially from a comparison theorem which applies since  $x \longmapsto b(z, x)$ is a nondecreasing function and  Proposition \ref{prop:Krylov} above.
\end{Proof} 
\begin{center}
\section{Technical results}\label{Sect:Tech}
\end{center}
\subsection{Proof of Proposition \ref{prop:f_n}:}
\noindent \textbf{Proof of Proposition \ref{prop:f_n}:} We will  use an induction argument. \\
\noindent For $n=0$, $f_0(s, t)=s^{a}t^{b} b_{s,t}$. So $|f(s, t)|\leq ||b||_{\infty} s^{a}t^{b}$. \\
\noindent Let $n\in \mathbb{N}$.  Assume (\ref{est: f_n}) and let us prove it for $n+1$. We have
\begin{align}\label{recursive formula:f_n}
f_{n+1}(s,t)&=s^{a - a'} t^{b - b'}    \big( D_{0+}^{a', b'} \, s^{a' - a} t^{b' - b} \,
I_{0+}^{a,b} ( s^{a - a'} t^{b - b'}  ) \big)\big( D_{0+}^{a', b'} \, s^{a' - a} t^{b' - b} \,
I_{0+}^{a,b} ( s^{a - a'} t^{b - b'} \cdot ) \big)^{n}\big( s^{a'} t^{b'} b_{s,t} \big) \notag\\
& = s^{a - a'} t^{b - b'}D_{0+}^{a', b'} \big(\, s^{a' - a} t^{b' - b} P_n(s, t) \big),
\end{align}
with
$$P_n(s, t) = I_{0+}^{a,b} ( f_{n} (s,t)).$$
By applying (\ref{weil-rep}) to   $g_{s, t}:=P_{n}(s, t)$ in (\ref{recursive formula:f_n}) and taking account of (\ref{e:doub-integ}), we obtain 
\[f_{n+1}(s, t)= \sum_{i=1}^{4} f^{i}_{n+1}(s, t),\]
where  $$f^{1}_{n+1}(s, t):= I_{0+}^{a-a',b-b'} f_n(s, t), \quad  f^{2}_{n+1}(s, t):= c_2 s^{a - a'} t^{b - b'} \int_{0}^{s}  \dfrac{s^{a^{\prime}-a} - u^{a^{\prime}-a}}{(s-u)^{a^{\prime} +1 }} P_n(u, t)du,  $$ $$  f^{3}_{n+1}(s, t):= c_1 s^{a - a'} t^{b - b'} \int_{0}^{t}  \dfrac{t^{b^{\prime}-b} - v^{b^{\prime}-b}}{(t-v)^{b^{\prime} +1 }} P_n(s, v)dv, $$
and 
\[ f^{4}_{n+1}:= \sum_{i=1}^{3}s^{a - a'} t^{b - b'} \tilde{L}^{i}_n(s, t),
\]
with
{\small{
\begin{eqnarray}
\left\{\begin{array}{l}
\tilde{L}^{1}_n(s, t) := c_3s^{a - a'} t^{b - b'}  \int_{0}^{s}\int_{0}^{t} (s - u)^{-a'-1} ( t - v )^{-b'-1} ( s^{a' - a}  - u^{a' - a})  ( t^{b' - b}  - v^{b' - b}  ) P_n (u, v)\, du \, dv ,\notag \\
\\
\tilde{L}^{2}_n(s, t) :=  c_3s^{a - a'} t^{b - b'}  \int_{0}^{s}\int_{0}^{t} (s - u)^{-a'-1} (t - v)^{-b'-1} ( s^{a' - a}  - u^{a' - a}  )  t^{b' - b} (  P_n  (u, t)  -P_n  (u, v)   )\, du \, dv,\notag \\
\\
\tilde{L}^{3}_n(s, t) := c_3s^{a - a'} t^{b - b'} \int_{0}^{s}\int_{0}^{t}(s - u)^{-a'-1} (t - v)^{-b'-1}  ( t^{b' - b}  - v^{b' - b}  )  s^{a' - a} (  P_n (s, v)  - P_n (u, v)   )\, du \, dv.
\end{array}\right.
\end{eqnarray}}}

We also need the following 
\begin{lem}\label{lem:f} Let $\gamma(n):= (n+1)a-na'$ and $\tilde{\gamma}(n):= (n+1)b-nb'$. Then, we have the following estimates:
$$|f^{1}_{n+1}(s, t)| \leq C_n \tilde{\kappa}_n s^{\gamma(n+1)} t^{\tilde{\gamma}(n +1)}, \,\, |f^{2}_{n+1}(s, t)| \leq C_nd^{(n)}_{1} \kappa_n s^{\gamma(n+1)} t^{\tilde{\gamma}(n +1)}, $$
\[|f^{3}_{n+1}(s, t)| \leq C_nd^{(n)}_{2} \kappa_n s^{\gamma(n+1)} t^{\tilde{\gamma}(n +1)}\]
and 
\[|f^{4}_{n+1}(s, t)| \leq c_3 \left(d^{(n)}_{1} d^{(n)}_{2}\tilde{\kappa}_n + \dfrac{d_5d^{(n)}_{1} +d_6 d^{(n)}_{2}}{ab\Gamma(a)\Gamma(b)} \right) C_n s^{\gamma(n+1)} t^{\tilde{\gamma}(n +1)} . \]
\end{lem}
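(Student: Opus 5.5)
Each of the four bounds in Lemma~\ref{lem:f} comes from the induction hypothesis $|f_n(s,t)|\le C_n s^{\gamma(n)}t^{\tilde\gamma(n)}$ together with explicit Beta-function computations of fractional integrals of power functions. Note that $\gamma(n)+1=\alpha_n-a$, and that $\gamma(n+1)=\gamma(n)+a-a'$. First I will record the basic bound on $P_n=I^{a,b}_{0+}f_n$. Positivity of the Riemann--Liouville kernel and the induction hypothesis give
\[
|P_n(s,t)|\le C_n\,\frac{\Gamma(\gamma(n)+1)\Gamma(\tilde\gamma(n)+1)}{\Gamma(\gamma(n)+1+a)\Gamma(\tilde\gamma(n)+1+b)}\,s^{\gamma(n)+a}t^{\tilde\gamma(n)+b}.
\]
This is the source of the factor $\kappa_n$, up to the index bookkeeping $\alpha_n$ versus $\gamma(n)+1$, which I will align with the paper's conventions.

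The term $f^1_{n+1}=I^{a-a',b-b'}_{0+}f_n$ is handled the same way. Integrating $s^{\gamma(n)}t^{\tilde\gamma(n)}$ with orders $(a-a',b-b')$ yields $s^{\gamma(n+1)}t^{\tilde\gamma(n+1)}$ multiplied by a ratio of Gamma functions, and this ratio is $\tilde\kappa_n$.

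For $f^2_{n+1}$ and $f^3_{n+1}$, I will insert the bound on $|P_n(u,t)|$ into the one-dimensional singular integral. I then substitute $u=s\theta$. This gives
\[
\int_0^s\frac{|s^{a'-a}-u^{a'-a}|}{(s-u)^{a'+1}}u^{\gamma(n)+a}\,du
= s^{\gamma(n)+2a-2a'}\int_0^1\frac{|1-\theta^{a'-a}|}{(1-\theta)^{a'+1}}\theta^{\gamma(n)+a}\,d\theta,
\]
where the last integral is $d_1^{(n)}$, up to matching the exponent. Combining this with the prefactor $s^{a-a'}$ and the power $t^{\tilde\gamma(n)+b}$, and keeping track of the signs of the exponents, should produce exactly $s^{\gamma(n+1)}t^{\tilde\gamma(n+1)}$. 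One point needs checking here. Since $a'<a$ in the regime of Proposition~\ref{prop:f_n}, the integrand behaves like $(1-\theta)^{-a'}$ near $\theta=1$, which is integrable. Near $\theta=0$ it behaves like $\theta^{a'-a+\gamma(n)+a}$, which is also integrable, so $d^{(n)}_1<\infty$. The term $f^3_{n+1}$ is symmetric in $t$ and gives $d^{(n)}_2$.

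For $f^4_{n+1}$ I split into the three pieces $\tilde L^i_n$. In $\tilde L^1_n$ the integrand factorizes, so the same substitution applied in both variables gives $d^{(n)}_1d^{(n)}_2$ times the $P_n$ bound. The main obstacle is $\tilde L^2_n$ and $\tilde L^3_n$. These involve the increments $P_n(u,t)-P_n(u,v)$, which have to absorb the non-integrable singularity $(t-v)^{-b'-1}$. A pointwise bound on $|P_n|$ is therefore not enough; I need a Hölder-type bound on $P_n$ in each variable separately. I will write
\[
P_n(u,t)-P_n(u,v)=\frac{1}{\Gamma(a)\Gamma(b)}\int_0^u\!(u-x)^{a-1}\Big[\int_0^t(t-y)^{b-1}f_n\,dy-\int_0^v(v-y)^{b-1}f_n\,dy\Big]dx.
\]
Using $|f_n|\le C_n x^{\gamma(n)}y^{\tilde\gamma(n)}$ and the standard increment estimate for the Riemann--Liouville integral (the difference lemma from the appendix), this should be bounded by
\[
\frac{C_n}{ab\,\Gamma(a)\Gamma(b)}\,u^{\gamma(n)+a}\big(|t-v|^{b}+|t^{b}-v^{b}|\big)t^{\tilde\gamma(n)}.
\]
After rescaling $v=t\eta$, the factor $|t-v|^b+|t^b-v^b|$ turns into the integrand of $d_5$. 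Since $b>b'$, this kills the singularity $(1-\eta)^{-b'-1}$. The remaining $u$-integral is again $d_1^{(n)}$, which gives $c_3\,d_5d_1^{(n)}C_n/(ab\Gamma(a)\Gamma(b))$ times the power. $\tilde L^3_n$ gives $d_6d_2^{(n)}$ by symmetry. The only delicate work is verifying that the appendix increment estimate really produces the normalized factor $|1-\eta|^b+|1-\eta^b|$ uniformly in $n$ with the constant $1/(ab)$. Summing the four bounds then gives $C_{n+1}=r_nC_n$.
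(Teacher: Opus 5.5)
Your plan is the paper's proof step for step: the bound $|P_n|\le C_n\kappa_n s^{\gamma(n)+a}t^{\tilde\gamma(n)+b}$, Beta integrals for $f^1_{n+1}$, the rescaling that produces $d^{(n)}_1,d^{(n)}_2$, factorization of $\tilde L^1_n$, and Corollary~\ref{est:RF-d} with $M=C_nu^{\gamma(n)}t^{\tilde\gamma(n)}$ for $\tilde L^2_n,\tilde L^3_n$ (your doubt about that constant is justified: Remark~\ref{Rem:A1} overlooks that $A_1\neq0$ when $x_1=x_2$, the sharp bracket is $2(t-v)^b-(t^b-v^b)$, so $1/(ab)$ should read $2/(ab)$, which affects only constants). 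Fix two bookkeeping slips: $\alpha_n=\gamma(n)+1$ exactly, so $\kappa_n$ appears with no index shift, and the rescaled integral is $s^{\gamma(n)}d^{(n)}_1$, not $s^{\gamma(n)+2a-2a'}d^{(n)}_1$, which together with the Weil prefactor $s^{a-a'}t^{-b'}$ used in the paper's proof (the $t^{b-b'}$ in the displayed definition of $f^2_{n+1}$ is a typo) gives exactly $s^{\gamma(n+1)}t^{\tilde\gamma(n+1)}$.
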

\noindent \textbf{Proof of Lemma \ref{lem:f}}:  Let us estimate $f^{1}_{n+1}(s, t)$. Using the induction assumption, we have
\begin{align*}
|f^{1}_{n}(s,t)|&\leq C_n I_{0+}^{a-a',b-b'}  \big(s^{(n+1)a -na'}t^{(n+1)b -nb'} \big) \notag\\
& = C_n \dfrac{\Gamma(\alpha_n)}{\Gamma(\alpha_n +a-a')}\dfrac{\Gamma(\beta_n)}{\Gamma(\beta_n +b-b')}s^{\gamma(n+1)} t^{\tilde{\gamma}(n +1)}= C_n \kappa_n s^{\gamma(n+1)} t^{\tilde{\gamma}(n +1)}.
\end{align*}
For $f^{2}_{n+1}(s, t)$, by using the induction assumption, we get
\begin{align}\label{P_n: estimate}
P_n(u, t)|=  |I_{0+}^{a, b} f_n(u, t) | \leq C_n \dfrac{\Gamma(\alpha_n)\Gamma(\beta_n)}{\Gamma(\beta_n +a)\Gamma(\beta_n +b)} u^{(n+2)a-na'}t^{(n+2)b-nb'}.
\end{align}
Then
\begin{align*}
|f^{2}_{n}(s,t)|&\leq s^{a-a'}t^{-b'}C_n \kappa_n\left(\displaystyle{\int_{0}^{s}} \dfrac{|s^{a^{\prime}-a} - u^{a^{\prime}-a}|}{(s-u)^{a^{\prime} +1 }}u^{(n+2)a-na'} du \right)t^{(n+2)b-nb'} \notag\\
& = c_1C_n \kappa_n d^{(n)}_1  s^{a-a' +(n+1)a-na'} t^{-b' +(n+2)b-nb'}=c_1C_n \kappa_n d^{(n)}_1 s^{\gamma(n+1)} t^{\tilde{\gamma}(n +1)}.
\end{align*} 
For $f^{3}_{n}(s,t)$, similarly we have
\begin{align*}
|f^{3}_{n}(s,t)|&\leq c_2C_n \kappa_n t^{b-b'}s^{-a'}\left(\displaystyle{\int_{0}^{t}} \dfrac{|t^{b^{\prime}-b} - v^{b^{\prime}-b}|}{(t-v)^{b^{\prime} +1 }}v^{(n+2)b-nb'} dv \right)= c_2d^{(n)}_2C_n \kappa_ns^{\gamma(n+1)} t^{\tilde{\gamma}(n +1)}.
\end{align*} 
To estimate $f^{4}_{n}(s,t)$ we need the following claim whose proof is postponed until the proof of Proposition \ref{prop:f_n} is finished.
\begin{claim}\label{claim: L'} Let $\gamma(n):= (n+1)a-na'$ and $\tilde{\gamma}(n):= (n+1)b-nb'$. Then, we have:
\begin{itemize} 
\item[i)-]  \[s^{a-a'}t^{b-b'} |\tilde{L}^{1}_n(s, t)| \leq c_3C_n  d^{(n)}_1d^{(n)}_2\tilde{\kappa}_n s^{\gamma(n+1)} t^{\tilde{\gamma}(n +1)} ,
\]
\item[ii)-]  \[s^{a-a'}t^{b-b'} |\tilde{L}^{2}_n(s, t)| \leq c_3 \dfrac{d_5d^{(n)}_1}
{a\Gamma
(a)\Gamma(b)} C_n  s^{\gamma(n+1)} t^{\tilde{\gamma}(n +1)} ,
\]
\item[iii)-]  \[s^{a-a'}t^{b-b'} |\tilde{L}^{3}_n(s, t)| \leq c_3 \dfrac{d_6d^{(n)}_2}{a\Gamma
(a)\Gamma(b)} C_n  s^{\gamma(n+1)} t^{\tilde{\gamma}(n +1)}.
\]
\end{itemize}
\end{claim}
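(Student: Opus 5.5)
Each of the three bounds in Claim \ref{claim: L'} follows from the induction hypothesis of Proposition \ref{prop:f_n} and the explicit form of $P_n=I^{a,b}_{0+}f_n$. From $|f_n(s,t)|\le C_n s^{\gamma(n)}t^{\tilde\gamma(n)}$ and the Beta-integral identity $I^{a}_{0+}(s^{\mu})=\frac{\Gamma(\mu+1)}{\Gamma(\mu+a+1)}s^{\mu+a}$, we get the pointwise bound (\ref{P_n: estimate}):
\[
|P_n(u,v)|\le C_n\kappa_n u^{\gamma(n)+a}v^{\tilde\gamma(n)+b}.
\]
The whole task is then to insert power-type bounds into the three double integrals defining $\tilde L^i_n$ and reduce each to a product of one-dimensional integrals over $[0,1]$ after the substitutions $u=s\bar u$ and $v=t\bar v$. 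These one-dimensional integrals are exactly $d^{(n)}_1$, $d^{(n)}_2$, $d_5$ and $d_6$.

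\emph{Part i).} The integrand of $\tilde L^1_n$ factorizes as
\[
(s-u)^{-a'-1}(s^{a'-a}-u^{a'-a})\,u^{\gamma(n)+a}\;\times\;(t-v)^{-b'-1}(t^{b'-b}-v^{b'-b})\,v^{\tilde\gamma(n)+b},
\]
multiplied by $C_n\kappa_n$. Scaling, the $u$-integral equals $s^{a'-a}\cdot s^{-a'}\cdot s^{\gamma(n)+a}\cdot s^{0}\, d^{(n)}_1$, with the correct power bookkeeping: $(s-u)^{-a'-1}du$ contributes $s^{-a'}$, the bracket contributes $s^{a'-a}$, and $u^{\gamma(n)+a}$ contributes $s^{\gamma(n)+a}$, giving $s^{\gamma(n)}$ in total. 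The same holds for $v$. Multiplying by the prefactors $s^{a-a'}t^{b-b'}$ yields the power $s^{\gamma(n+1)}t^{\tilde\gamma(n+1)}$, and the $\kappa_n$ (or $\tilde\kappa_n$, as in the statement) constant comes from the $P_n$ bound. I will check that the exponents match those in the definition of $d^{(n)}_i$, namely $u^{\alpha_n+a-1}$ with $\alpha_n=\gamma(n)+1$, and verify that the prefactor powers combine correctly. I expect at most an adjustment of which $\kappa$ appears, and will record the constant that actually arises.

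\emph{Parts ii) and iii).} These are the main obstacle, because they involve the increments $P_n(u,t)-P_n(u,v)$ and $P_n(s,v)-P_n(u,v)$. A crude bound by $|P_n(u,t)|+|P_n(u,v)|$ fails: $(t-v)^{-b'-1}$ is not integrable at $v=t$. We therefore need Hölder-type control of $P_n$ in $v$ of order strictly greater than $b'$. I would write
\[
P_n(u,t)-P_n(u,v)=\frac{1}{\Gamma(a)\Gamma(b)}\int_0^u\int (u-x)^{a-1}\big[(t-y)_+^{b-1}-(v-y)_+^{b-1}\big]f_n(x,y)\,dy\,dx,
\]
and bound $|f_n|$ by $C_n\sup$ of the power function. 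Integrating the kernel difference in $y$ (the standard estimate $\int_0^t|(t-y)^{b-1}-(v-y)_+^{b-1}|dy\le \frac{2}{b}(t-v)^b$, or the scaled form giving $|1-\bar v|^b+|1-\bar v^b|$) and the $x$-integral (which gives $u^a/a$) yields
\[
|P_n(u,t)-P_n(u,v)|\le \frac{C_n}{ab\Gamma(a)\Gamma(b)}\,u^{\gamma(n)+a}\,t^{\tilde\gamma(n)}\big(|t-v|^b+|t^b-v^b|\big).
\]
This is exactly the source of the factor $1/(ab\Gamma(a)\Gamma(b))$ and of the integrand of $d_5$. Since $b>b'$ (because $\beta<\beta'$), the integral $\int (t-v)^{-b'-1}(t-v)^b\,dv$ converges, and scaling $v=t\bar v$ produces $d_5$. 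The $u$-integral with $(s^{a'-a}-u^{a'-a})(s-u)^{-a'-1}u^{\gamma(n)+a}$ produces $d^{(n)}_1$, as in part i). Part iii) is symmetric, with the roles of $(s,a,a')$ and $(t,b,b')$ exchanged, giving $d_6d^{(n)}_2$.

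Finally, I will collect the powers of $s$ and $t$ in each case. Using $\gamma(n)+a-a'=\gamma(n+1)$, each part reduces to $s^{\gamma(n+1)}t^{\tilde\gamma(n+1)}$, with the constants stated in the claim, up to harmless numerical factors absorbed into $C_{n+1}$.
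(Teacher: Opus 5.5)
Your proposal is correct and is essentially the paper's proof. You bound $P_n=I^{a,b}_{0+}f_n$ by $C_n\kappa_n u^{\gamma(n)+a}v^{\tilde\gamma(n)+b}$, control the increments of $P_n$ by the Riemann--Liouville increment estimate (the paper cites Corollary~\ref{est:RF-d}, you re-derive it inline), and rescale $u=s\bar u$, $v=t\bar v$ to produce $d^{(n)}_1$, $d^{(n)}_2$, $d_5$, $d_6$. Like the paper's own computation, you get $\kappa_n$ rather than $\tilde\kappa_n$ in i) and $ab\Gamma(a)\Gamma(b)$ rather than $a\Gamma(a)\Gamma(b)$ in ii)--iii), which only changes constants in the recursion for $C_n$. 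One small point: your kernel bound $\frac{2}{b}(t-v)^b$ is the correct one. The exact supremum is $\frac{1}{b}[2(t-v)^b-(t^b-v^b)]$, which can exceed $(t-v)^b+(t^b-v^b)$ when $v$ is close to $t$, so your displayed increment estimate, like the paper's corollary, needs a factor $2$; this too is harmless.
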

\noindent Now  using this claim, we have for  $f^{4}_{n}(s,t)$
\begin{align*}
|f^{4}_{n}(s,t)|&\leq s^{a-a'}t^{b-b'}  \sum_{i=1}^3 |\tilde{L}^{i}_n(s, t)|  \\ 
&\leq c_3C_n \left( \tilde{\kappa}_n  d^{(n)}_1d^{(n)}_2 +\dfrac{d_5d^{(n)}_1 + d_6d{(n)}_2}
{a\Gamma
(a)\Gamma(b)} \right)  s^{\gamma(n+1)} t^{\tilde{\gamma}(n +1)}.
\end{align*} 
This finishes the proof of Lemma \ref{lem:f}.

Therefore, in view of Lemma  \ref{lem:f}, we obtain 
\begin{align*}
|f_{n+1}(s,t)|&\leq \sum_{i=1}^{4} |f^{i}_{n+1}(s, t)\\\
&\leq C_n \left( \tilde{\kappa}_n+ c_3\tilde{\kappa}_n  d^{(n)}_1d^{(n)}_2 + \kappa_n(c_1d^{(n)}_1+ c_2d^{(n)}_2) + c_3 \dfrac{d_5d^{(n)}_1 + d_6d_2^{(n)}}
{a\Gamma
(a)\Gamma(b)} \right)  s^{\gamma(n+1)} t^{\tilde{\gamma}(n +1)}\\
&=C_{n+1}s^{\gamma(n+1)} t^{\tilde{\gamma}(n +1)}.
\end{align*} 
This finishes the proof of Proposition \ref{prop:f_n}.

\noindent \textbf{Proof of the Claim \ref{claim: L'}:} The proof of this claim follows the same line of reasoning as that in  Lemma \ref{lem:f}. Indeed, we use the induction estimate  for $f_n$ together with  Proposition~\ref{Prop:A1}  from the appendix. For  the  sake of completeness, we give the proof here. \\
\noindent For $i)$, we first recall that \[\tilde{L}^{1}_n(s, t) := c_3s^{a - a'} t^{b - b'}  \int_{0}^{s}\int_{0}^{t} \dfrac{( s^{a' - a}  - u^{a' - a})  ( t^{b' - b}  - v^{b' - b}  ) }{(s - u)^{a'+1} ( t - v )^{b'+1}}P_n (u, v)\, du \, dv .\]
Then from the inequality 
\begin{align*}
|P_n(u, v)|=  |I_{0+}^{a, b} f_n(u, t) | \leq C_n \kappa_n u^{(n+2)a-na'}v^{(n+2)b-nb'}
\end{align*}
we obtain 
\begin{align*}
&|\tilde{L}^{1}_n(s, t)| \leq c_3 C_n \kappa_n \left(\int_{0}^{s} \dfrac{u^{a'-a}-s^{a'-a}}{(s-u)^{a'+1}}u^{(n+2)a-na'} du\right)\times \left(\int_{0}^{t} \dfrac{v^{b'-b}-t^{b'-b}}{(t-v)^{b'+1}} v^{(n+2)b-nb'}dv\right)\\
&\qquad =c_3 d^{(n)}_1d^{(n)}_2 C_n \kappa_n  s^{(n+1)a-na'}t^{(n+1)b-nb'}.
\end{align*}
Thus
\[s^{a-a'}t^{b-b'}|\tilde{L}^{1}_n(s, t)| \leq c_3 d^{(n)}_1d^{(n)}_2C_n \kappa_n s^{\gamma(n+1)}t^{\tilde{\gamma}(n +1)}.\] 
\noindent For $ii)$, using (\ref{est:RF1}) from Corollary \ref{est:RF-d}, we have 
\begin{align*}
&|P_n(u, t)-P_n(u, v)| = |I_{0+}^{a, b} f_n(u, t)-I_{0+}^{a, b} f_n(u, v) |\\
&\qquad \quad \quad \leq C_n \dfrac{u^{(n+2)a-na'}t^{(n+2)b-nb'}}{ab\Gamma(a)\Gamma(b)}u^{a}[|t-v|^b+|t^b-v^b|].
\end{align*}
Then
\begin{align*}
& |\tilde{L}^{2}_n(s, t)| \leq c_3 \int_{0}^{s}\int_{0}^{t} \dfrac{|s^{a'-a}-u^{a'-a}|t^{b'-b}|P_n(u, t)-P_n(u, v)|}{(s-u)^{a'+1}+(t-v)^{b'+1}} \, du\,  dv\\
&\leq \dfrac{c_3C_n}{ab\Gamma(a)\Gamma(b) }t^{(n+1)b-nb')}t^{b'-b}\left(    \int_{0}^{s} \dfrac{ |s^{a'-a}-u^{a'-a}|}{(s-u)^{a'+1}} u^{a} u^{(n+1)a-na'}  du\right)\\
& \qquad \quad \qquad \times \left( \int_{0}^{t} \dfrac{ |t-v|^{b}+ |t^{b}-v^{b}|}{(t-v)^{b'+1}} dv \right)\\
&=\dfrac{c_3C_nd_5d^{(n)}_1}{ab\Gamma(a)\Gamma(b)}s^{(n+1)a-na'}t^{(n+1)b-nb'}.
\end{align*}
Hence 
\[s^{a-a'}t^{b-b'}|\tilde{L}^{2}_n(s, t)| \leq\dfrac{c_3C_nd_5d^{(n)}_1}{ab\Gamma(a)\Gamma(b)}s^{\gamma(n+1)}t^{\tilde{\gamma}(n+1)}.   \]
\noindent For $iii)$, we similarly have in view of (\ref{est:RF2}) 
\begin{align*}
&|P_n(s, v)-P_n(u, v)| = |I_{0+}^{a, b} f_n(s, v)-I_{0+}^{a, b} f_n(u, v) |\\
&\qquad \quad \quad \leq C_n \dfrac{s^{(n+2)a-na'}v^{(n+2)b-nb'}}{ab\Gamma(a)\Gamma(b)}v^{a}[|s-u|^b+|s^b-u^b|].
\end{align*}
Then
\begin{align*}
& |\tilde{L}^{3}_n(s, t)| \leq c_3 \int_{0}^{s}\int_{0}^{t} \dfrac{|t^{b'-b}-v^{b'-b}|s^{a'-a}|P_n(s, v)-P_n(u, v)|}{(s-u)^{a'+1}+(t-v)^{b'+1}} \, du\,  dv\\
&\leq \dfrac{c_3C_nd_6d^{(n)}_2}{ab\Gamma(a)\Gamma(b)}s^{(n+1)a-na'}t^{(n+1)b-nb'},
\end{align*}
and \[s^{a-a'}t^{b-b'}|\tilde{L}^{3}_n(s, t)| \leq\dfrac{c_3C_nd_6d^{(n)}_2}{ab\Gamma(a)\Gamma(b)}s^{\gamma(n+1)}t^{\tilde{\gamma}(n+1)}.   \]
This finishes the proof of the Claim \ref{claim: L'}.
\subsection{Asymptotic behaviour of $|C^{\star}_n|$:}
\begin{lem} \label{lemm: estimations}Let 
 \begin{eqnarray*}
\left\{\begin{array}{l}
p:= \min(1-a', 1-b'), \gamma:= a-a'+b-b',\\
\gamma_0:=\min( \gamma, a+b+p)\,\quad \mbox{and}\,\quad \eta:=\min(p, \gamma).
\end{array}\right.
\end{eqnarray*}
Then, as $n \to \infty$, 
\begin{enumerate}
\item[i)-] 
\begin{align}\label{asymp: d1, d2, l}
d^{(n)}_1=O(n^{a'-1}),\quad d^{(n)}_2=O(n^{b'-1}) \, \mbox{and}\quad l_n=O(n^{-p}) .
\end{align}
\item[ii)-] \[\kappa'_n=O(n^{-\gamma}),\quad \tilde{\kappa}'_n=O(n^{-(a+b)}), \, \quad \kappa_n=O(n^{-(a+b)}),\quad \mbox{and}\quad \tilde{\kappa}_n=O(n^{-\gamma}) .\]
\item[iii)-] \begin{align}\label{asymp: m}
&m_n=O(n^{-\gamma_0}),\qquad r_n=O(n^{-\eta})\qquad   \mbox{and}\quad C_n=O\left(\dfrac{B^n}{(n!)^{\eta}}\right) 
\end{align}
for some $B>0$.
\end{enumerate}
\end{lem}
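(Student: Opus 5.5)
The plan is to work term by term: Stirling-type asymptotics for ratios of Gamma functions handle the $\kappa$'s, and a Beta-integral comparison handles the $d^{(n)}$'s. Parts iii) then follow by adding up the orders and iterating the recursion $C_{n+1}=r_nC_n$.

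For ii), recall that for fixed $c$ one has $\Gamma(x)/\Gamma(x+c)\sim x^{-c}$ as $x\to\infty$. Since $a>a'$ and $b>b'$ in the case considered, $\alpha_n=(n+1)a-na'+1\sim n(a-a')$ and $\beta_n\sim n(b-b')$ both grow linearly in $n$. This gives:
\begin{itemize}
\item $\kappa_n\sim \alpha_n^{-a}\beta_n^{-b}=O(n^{-(a+b)})$;
\item $\tilde\kappa_n=O(n^{-(a-a')-(b-b')})=O(n^{-\gamma})$;
\item $\kappa'_n=\frac{\Gamma(\alpha_n+a)}{\Gamma(\alpha_n+a+(a-a'))}\cdots=O(n^{-\gamma})$;
\item $\tilde\kappa'_n=\frac{\Gamma(\alpha_n+a-a')}{\Gamma(\alpha_n+a-a'+a)}\cdots=O(n^{-(a+b)})$.
\end{itemize}

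For i), the integrand of $d^{(n)}_1$ satisfies $\frac{1-u^{a'-a}}{(1-u)^{a'+1}}$. Note $a'-a<0$, so $|1-u^{a'-a}|\le C(1-u)u^{a'-a}$ near $u=1$. Near $u=0$ the factor $u^{\alpha_n+a-1}$ absorbs the singularity $u^{a'-a}$ for large $n$. Hence
\[
d^{(n)}_1\le C\int_0^1 (1-u)^{-a'}u^{\alpha_n+a'-1}\,du=C\,B(\alpha_n+a',1-a')\sim C\,\Gamma(1-a')\,\alpha_n^{a'-1}=O(n^{a'-1}),
\]
and likewise $d^{(n)}_2=O(n^{b'-1})$. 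The constants $d_5,d_6$ are finite because $b>b'$ and $a>a'$. Therefore
\[
l_n=\frac{c_3(d_5d^{(n)}_1+d_6d^{(n)}_2)}{ab\Gamma(a)\Gamma(b)}=O(n^{a'-1}+n^{b'-1})=O(n^{-p}).
\]

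For iii), substitute these orders into the definitions:
\[
m_n=\kappa'_n+\tilde\kappa'_n\bigl(c_1d^{(n)}_1+c_2d^{(n)}_2+c_3d^{(n)}_1d^{(n)}_2\bigr)=O(n^{-\gamma})+O(n^{-(a+b)-p}),
\]
which is $O(n^{-\gamma_0})$. Similarly
\[
r_n=(1+c_3d^{(n)}_1d^{(n)}_2)\tilde\kappa_n+(c_1d^{(n)}_1+c_2d^{(n)}_2)\kappa_n+l_n=O(n^{-\gamma})+O(n^{-p-a-b})+O(n^{-p}),
\]
which is $O(n^{-\eta})$.

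It remains to bound $C_n$. Choose $B_0$ with $r_k\le B_0k^{-\eta}$ for $k\ge1$, absorbing $r_0$ into the constant. Then
\[
C_n=C_0\prod_{k=0}^{n-1}r_k\le C_0 r_0 B_0^{\,n-1}\bigl((n-1)!\bigr)^{-\eta}\le B^n (n!)^{-\eta},
\]
where the last step uses $n^\eta\le 2^{n\eta}$ and enlarges $B$.

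The main obstacle I expect is the bound on $d^{(n)}_1$. Care is needed with the sign of $a'-a$ and the behaviour of the integrand at both endpoints, and one must check that the Beta-function comparison holds uniformly in $n$ rather than just asymptotically. If it fails for small $n$, I would split the integral at $u=1/2$ and bound $\int_0^{1/2}$ by $C2^{-\alpha_n}$, which is exponentially small. I would also verify that the case distinction $(\alpha',\beta')\prec(\alpha,\beta)$ really gives $a>a'$ and $b>b'$, so that $\alpha_n\to\infty$.
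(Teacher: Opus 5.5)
Your proposal is correct. Its overall structure matches the paper's: term-by-term orders for the $\kappa$'s and $d^{(n)}$'s, then adding exponents in $m_n$ and $r_n$, then the telescoping product $C_n=C_0\prod_{k<n}r_k$ with $n^{\eta}\le 2^{n\eta}$. The differences lie in the two technical tools.

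\textbf{The bound on $d^{(n)}_1$.} The paper splits the integral at $u=1/2$. It bounds the piece on $[0,1/2]$ by $c\,2^{-n(a-a')}$. On $[1/2,1]$ it combines a Lipschitz bound for $u^{a'-a}$ with $u^{N}\le e^{-N(1-u)}$, which reduces the integral to $\int_0^\infty t^{-a'}e^{-t}\,dt$. You instead dominate the integrand by a Beta density, and this comparison holds on all of $(0,1)$ with constant $1$. Since $0<a-a'<1$, one has $u^{a'-a}-1=u^{a'-a}(1-u^{a-a'})\le u^{a'-a}(1-u)$. Hence $|d^{(n)}_1|\le \Gamma(1-a')\Gamma(\alpha_n+a')/\Gamma(\alpha_n+1)$ for every $n\ge 0$; the integral converges at $0$ because $\alpha_n+a'-1=(n+1)a-(n-1)a'>0$ already at $n=0$. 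So your concern about uniformity in $n$ is resolved. The fallback split you mention, which is exactly the paper's route, is not needed, and your route is the shorter of the two.

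\textbf{The Gamma ratios.} The paper uses Wendel's inequality $\Gamma(x)/\Gamma(x+s)\le x^{-s}(1+s)^{1-s}$ for $x\ge1$, $0<s<1$. This gives bounds valid for every $n$ with explicit constants. You use the asymptotic $\Gamma(x)/\Gamma(x+c)\sim x^{-c}$. For the $O$-statements of the lemma this suffices, since finitely many initial terms are absorbed into the constants. Wendel's form is what you would want if you needed $B$ explicitly.

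\textbf{A small point.} Both arguments implicitly read $d^{(n)}_1,d^{(n)}_2$ with absolute values in the numerator. This is necessary, since $1-u^{a'-a}<0$ on $(0,1)$, and it also guarantees $r_k\ge0$ in your product bound.
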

\noindent \textbf{Proof of Lemma \ref{lemm: estimations}}:  Let us prove that $d^{(n)}_1=O(n^{a'-1})$. The proof of $d^{(n)}_2=O(n^{b'-1})$ is obtained by repeating the same arguments. We write
\[d^{(n)}_{1}= \int_{0}^{1/2}\dfrac{1-u^{a'-a}}{(1-u)^{a'+1}}u^{(n+2)a-na'}du+  \int_{1/2}^{1}\dfrac{1-u^{a'-a}}{(1-u)^{a'+1}}u^{(n+2)a-na'}du=:d^{(n)}_{1, 1}+d^{(n)}_{1, 2}.\]
For $d^{(n)}_{1, 1}$, we have 
\begin{align}\label{est:21}
&d^{(n)}_{1, 1} \leq 2^{-n(a-a')} \int_{0}^{1/2}\dfrac{1-u^{a'-a}}{(1-u)^{a'+1}}u^{2a}du=c2^{-n(a-a')}=O(n^{a'-1}),
\end{align}
since $n^{1-a'}2^{-n(a-a')} \longrightarrow 0$, as $ n \to \infty$ ($a>a'$).\\
For $d^{(n)}_{1, 2}$, by using the inequality $|u^{a'-a}-1|\leq 2^{1+a-a'}(1-u)$ valid for all $u \in [1/2, 1]$, 
we have
\[ d^{(n)}_{1, 2} \leq  (a-a')2^{1+ a-a'} \int_{0}^{1/2}\dfrac{ u^{n(a-a')+2a}}{(1-u)^{a'}} du \leq (a-a')2^{1+a-a'} \int_{0}^{1/2}\dfrac{ u^{n(a-a')}}{(1-u)^{a'}} du. \]
Now, since $ \ln(u) \leq u-1$ for all  $u \in [1/2, 1]$, we obtain
\begin{align}\label{est:22}
& d^{(n)}_{1, 2} \leq  (a-a')2^{1+a-a'} \int_{0}^{1/2}\dfrac{ e^{-n(a-a')(1-u)}}{(1-u)^{a'}} du \notag\\
&\leq (a-a')2^{1+a-a'} (a-a')^{a'-1}n^{a'-1} \int_{0}^{+\infty} t^{-a'}e^{-t}dt=c n^{a'-1} .
\end{align}
Therefore, the desired conclusion follows by combining (\ref{est:21}) and (\ref{est:22}).\\
Now, $l_n = O(n^{-p})$ is an immediate consequence of $d^{(n)}_1=O(n^{b'-1})$  and $d^{(n)}_2=O(n^{b'-1})$, together with the explicit expression of $l_n$. This finishes the proof of point $i)$.\\
For point $ii)$, we only prove $\kappa_n=O(n^{-(a+b)})$, since  the proof of the other  asymptotics is similar. Based on of the following Wendel's inequality (see \cite{W48}), valid for all  $0<s<1$  and $x>0$, 
\[\left( \dfrac{x}{x+s}\right)^{1-s} \leq \dfrac{\Gamma(x+s)}{x^{s}\Gamma(x)}\]
we get,  for all $0<s<1$  and  $x\geq 1$, 
\begin{align} \label{e:Wendel}
\dfrac{\Gamma(x)}{\Gamma(x+s)}\leq  x^{-s} \left( 1+ \dfrac{s}{x}\right)^{1-s} \leq x^{-s}(1+s)^{1-s}.
\end{align}
Then,  (\ref{e:Wendel}) implies 
\begin{align*}
& \kappa_n= \dfrac{\Gamma(\alpha_n)}{\Gamma(\alpha_n+ a)}\dfrac{\Gamma(\beta_n) }{\Gamma(\beta_n+b)}\leq (1+a)^{1-a}(1+b)^{1-b} \alpha_n^{-a}\beta_n^{-b}=O(n^{-a-b}).
\end{align*}
For point $iii)$, first recall that
\[m_n:= \kappa'_n + \tilde{\kappa'}_n(c_1d^{(n)}_1+ c_2d^{(n)}_2 +c_3d^{(n)}_1d^{(n)}_2)\]
and 
\[r_n:=(1+c_3d^{(n)}_1d^{(n)}_2)\tilde{\kappa}_n + (c_1d^{(n)}_1 +c_2d^{(n)}_2) \kappa_n +\dfrac{c_3(d_5d^{(n)}_1 +d_6d^{(n)}_2)}{ab\Gamma(a)\Gamma(b) }.\]
Then $m_n=O(n^{-\gamma_0})$ and $r_n=O(n^{-\eta})$ follow from points $i)$ and $ii)$. Now, let us  show that $C_n=O\left(\dfrac{B^n}{(n!)^{\eta}}\right) $ as $n\to \infty$. We have, for all $n\geq 1$, 
\[|C_n|=c_0\prod_{k=0}^{n-1}|r_k|. \]
Since $r_n= O(n^{-\eta})$ as $n \rightarrow\infty$, we deduce that there exist constants $M>0$,  an integer $N\geq 1$ and $C'_N>0$ such that for all $n\geq N$
\[|C_n|\leq C'_N \dfrac{M^n}{((n-1)!)^{\eta}},\]
which is smaller than $C'_N M^n/(n!)^{\eta}$ since $n^{\eta} \leq 2^{\eta}2^{n}$, for all $n \geq 1$. The proof of Lemma \ref{lemm: estimations} is now complete.
\begin{prop} \label{prop:C^*} We have, as $n\to\infty$, 
\begin{align}\label{e:estC*}
|C^{\star}_n|=O\left(\dfrac{B^n}{(n!)^{\eta}}\right),
\end{align}
 for some  $B>0$. In particular, $\sum_{n\geq 1} |C^{\star}_n|T^{n\gamma}<\infty$, where $\gamma= a-a'+b-b'$.
\end{prop}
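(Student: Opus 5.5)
The proof works directly with the linear recursion $C^{\star}_{n+1}=m_nC^{\star}_n+l_nC_n$ from Proposition \ref{proposiiton:J_n}, together with the bounds of Lemma \ref{lemm: estimations}. Unrolling the recursion gives the closed form
\begin{equation*}
C^{\star}_n=\Big(\prod_{k=0}^{n-1}m_k\Big)C^{\star}_0+\sum_{j=0}^{n-1}\Big(\prod_{k=j+1}^{n-1}m_k\Big)l_jC_j ,
\end{equation*}
with the empty product equal to $1$. All quantities are nonnegative, so it suffices to bound each term by $B^n/(n!)^{\eta}$.

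By Lemma \ref{lemm: estimations}, $m_k=O(k^{-\gamma_0})$, $l_k=O(k^{-p})$ and $C_j=O(B_1^j/(j!)^{\eta})$. Since $\gamma_0=\min(\gamma,a+b+p)\ge\min(\gamma,p)=\eta$, we get $m_k\le M k^{-\eta}$ for $k\ge1$ with some $M\ge1$. Increasing $M$ to absorb the finitely many small indices, we also have $m_0\le M$, $l_k\le M$ and $C_j\le M B_1^j/(j!)^{\eta}$.

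\textbf{First term.} This gives $\prod_{k=0}^{n-1}m_k\le M^n/((n-1)!)^{\eta}\le (2^{\eta}M)^n/(n!)^{\eta}$, using $n^\eta\le 2^{\eta n}$.

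\textbf{Sum term.} Each summand is bounded by
\begin{equation*}
M^{n-j}\Big(\frac{j!}{(n-1)!}\Big)^{\eta} M\,\frac{MB_1^j}{(j!)^{\eta}}=\frac{M^{n-j+2}B_1^j}{((n-1)!)^{\eta}}.
\end{equation*}
The factorials cancel exactly, which is the key point. Summing over $j\le n-1$ gives at most $n M^2\max(M,B_1)^n/((n-1)!)^{\eta}$. Absorbing the factors $n$ and $n^{\eta}$ into a geometric factor (for instance $n\cdot n^\eta\le 4^{n}$) yields $B^n/(n!)^{\eta}$ with $B=4\max(M,B_1)$, possibly enlarged further. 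This proves \eqref{e:estC*}.

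\textbf{Summability.} Ratio test: $B^{n+1}T^{(n+1)\gamma}(n!)^\eta/((n+1)!)^\eta B^nT^{n\gamma}=BT^\gamma(n+1)^{-\eta}\to0$, because $\eta>0$. Here $p>0$ and $\gamma>0$ hold since $a>a'$ and $b>b'$. Hence $\sum_n|C^\star_n|T^{n\gamma}<\infty$.

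The main obstacle is matching the exponent $\eta$ in the contribution of $C_j$ so that no loss of factorial power occurs. The choice of $\eta=\min(p,\gamma)$ in Lemma \ref{lemm: estimations} is exactly what makes the products $\prod m_k$ and $C_j$ decay at the same factorial rate. One must also check that the $l_j$ factors, which are merely bounded, do not spoil this.
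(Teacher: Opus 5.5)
Your proposal is correct and follows the paper's proof: you unroll $C^{\star}_{n+1}=m_nC^{\star}_n+l_nC_n$ into the closed form, bound the products of the $m_k$ by factorial decay and the terms $l_jC_j$ via the factorial bound on $C_j$ from the asymptotic lemma, and get summability from $\eta>0$. The only difference is bookkeeping: you use $m_k\le Mk^{-\eta}$ (legitimate since $\gamma_0\ge\eta$) so the factorials cancel exactly, whereas the paper keeps $k^{-\gamma_0}$ and then absorbs $(j!/(n-1)!)^{\gamma_0-\eta}\le 1$; your version is tidier than the paper's final display, which drops the $l_jC_j$ contribution and writes the exponent $\gamma_0$ where only $\eta$ is justified.
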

\begin{Proof}
From  $C^{\star}_{n+1}=  m_nC^{\star}_{n} +l_nC_n $, we prove by induction that, for all $n \geq 1$,
 \begin{align} \label{e:expressionC*}
&C^{\star}_{n} = \left(\prod_{k=0}^{n-1}m_k\right)c^{\star}_0 + \sum_{j=0}^{n-1}\left(\prod_{k=j+1}^{n-1}m_k\right)g_j,
\end{align}
with $g_n:=l_nC_n$. Then 
 \begin{align} \label{est: seqC'}
&|C^{\star}_{n}| \leq  |c^{\star}_0|\left(\prod_{k=0}^{n-1}|m_k|\right) + \sum_{j=0}^{n-1}\left(\prod_{k=j+1}^{n-1}|m_k|\right)|g_j|.
\end{align}
First,  we get from (\ref{asymp: d1, d2, l}) that there exist $B>0$ and $N_0 \in \mathbb{N}$ such that for all $n \geq N_0$,
\begin{align}\label{est:g_n}
g_n:= l_nC_n \leq C_0 C'_{N_{0}} \dfrac{B^n}{n^p (n!)^{\eta}} .
\end{align}
In particular, the series $\sum_n |g_n|$ converges.\\
\noindent Second, using (\ref{asymp: m}) of Lemma \ref{lemm: estimations}, there exist $M>1$ and an integer $N \geq 2$, such that for all $k\geq N$ it holds $|m_k|\leq \dfrac{M}{ k^{\gamma_0}}$.
Then, on one hand for all $n \geq N+1$, we have
 \begin{align*} 
&\prod_{k=0}^{n-1}|m_k| = \left(\prod_{k=0}^{N-1}|m_k|\right)\left(\prod_{k=N}^{n-1}|m_k|\right)\leq C'_N M^{n-N}\left( \dfrac{(N-1)!}{(n-1)!}\right)^{\gamma_0}= C'_N\dfrac{M^{n}}{((n-1)!)^{\gamma_0}}.
\end{align*}
On the other hand, for $j\geq N-1$, we get 
 \begin{align*} 
&\prod_{k=j+1}^{n-1}|m_k|  \leq  M^{n-j-1}\left( \dfrac{j!}{(n-1)!}\right)^{\gamma_0}= \dfrac{M^n}{((n-1)!)^{\gamma_0}}\dfrac{(j!)^{\gamma_0}}{M^{j+1}} ,
\end{align*}
and for $j\leq  N-2$,
\begin{align*} 
&\prod_{k=j+1}^{n-1}|m_k| = \left(\prod_{k=j+1}^{N-1}|m_k|\right)\left(\prod_{k=N}^{n-1}|m_k|\right)\leq C'_N M^{n-N}\left( \dfrac{(N-1)!}{(n-1)!}\right)^{\gamma_0}=C'_N\dfrac{M^{n}}{((n-1)!)^{\gamma_0}}.
\end{align*}
Therefore (\ref{est: seqC'}) becomes, for all $n \geq N':=\max(N, N_0+1)$,
 \begin{align} \label{est: 2seqC'}
&|C^{\star}_{n}| \leq  |c^{\star}_0|C'_N\dfrac{M^{n}}{((n-1)!)^{\gamma_0}} +C'_N\dfrac{M^{n}}{((n-1)!)^{\gamma_0}}+ \dfrac{M^n}{((n-1)!)^{\gamma_0}} \left(\sum_{j=N'-1}^{n-1} \left(\dfrac{B}{M}\right)^j \dfrac{(j!)^{\gamma_0- \eta}}{j^p}\right).
\end{align} 
Notice that $\eta \leq \gamma_0$, so the series appearing on the right-hand side of  (\ref{est: 2seqC'}) may be divergent. Nevertheless, we have
{\small{
 \begin{align*} &\dfrac{1}{((n-1)!)^{\gamma_0}} \sum_{j=N'-1}^{n-1}\dfrac{1}{(j!)^p} \left(\dfrac{B}{M}\right)^j \dfrac{(j!)^{\gamma_0- \eta}}{j^p}=\dfrac{1}{((n-1)!)^{\eta}} \sum_{j=N'-1}^{n-1} \dfrac{1}{(j!)^p}\left(\dfrac{B}{M}\right)^j \left(\dfrac{j!}{(n-1)!}\right)^{\gamma_0- \eta}\\
 &\leq \dfrac{1}{((n-1)!)^{\eta}} \sum_{j=N'-1}^{n-1}\left(\dfrac{B}{M}\right)^j \leq \dfrac{c}{ ((n-1)!)^{\eta}}.
\end{align*} 
}}
Then, for all $n \geq N'$,
\begin{align} \label{est: 2seqC'}
&|C^{\star}_{n}| \leq  |c^{\star}_0|C'_N\dfrac{M^{n}}{((n-1)!)^{\gamma_0}} \leq |c^{\star}_0|C'_N2^{\gamma_0}\dfrac{(2M)^{n}}{(n!)^{\gamma_0}}, 
\end{align} 
since $n^{\gamma_0} \leq 2^{\gamma_0}2^{n}$, for all $n \geq 1$.  This completes the proof of  (\ref{e:estC*}). Furthermore, by (\ref{est: 2seqC'}) we get also  the convergence of the series $\sum_{n\geq 1} |C^{\star}_n|T^{n\gamma}.$ This concludes the proof of Proposition \ref{prop:C^*}. 
\end{Proof}
\begin{lem} \label{lem: strong}  Let $(\alpha, \beta)$ and $(\alpha', \beta')$ be  in $(0, 1/2]^{2}$ such that $(\alpha, \beta)\prec (\alpha^{\prime}, \beta^{\prime})  \preceq (\frac{1}{2}, \frac{1}{2})$  or $(\alpha^{\prime}, \beta^{\prime}) \prec(\alpha, \beta)  \preceq (\frac{1}{2}, \frac{1}{2})$. Let $1<\gamma < 1 +\min(\frac{1}{\alpha}, \frac{1}{\beta})$. Recall the expression of $\sigma^2(s, t)$ given by (\ref{e:sigma}). Then, there exists $0<\varepsilon<1$ such that for all $(s, t)\in (0, \varepsilon)^2$
\[ \left(\frac{3}{2}\right)^{1-\gamma} s^{\alpha (1- \gamma)}t^{\beta (1- \gamma)} \leq \sigma^{1- \gamma}(s, t) \leq \left(\frac{1}{2}\right)^{1-\gamma} s^{\alpha (1- \gamma)}t^{\beta (1- \gamma)}. \]
Consequently, $\int_{0}^T\int_{0}^T \sigma^{1- \gamma}(s, t) ds dt < \infty.$
\end{lem}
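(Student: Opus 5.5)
We will reduce the statement to an explicit comparison of $\sigma^2$ with the variance of the rougher sheet. Treat the case $(\alpha,\beta)\prec(\alpha',\beta')$; the other case follows by exchanging the roles of the two sheets. The smaller indices $(\alpha,\beta)$ should govern $\sigma$ near the axes.

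\textbf{Step 1: expand the square in (\ref{e:sigma}).} This gives
\[
\sigma^2(s,t)=\mathcal{R}^{\alpha,\beta}(z,z)+\mathcal{R}^{\alpha',\beta'}(z,z)+2\int_{[0,z]}K^{\alpha,\beta}(z,\zeta)K^{\alpha',\beta'}(z,\zeta)\,d\zeta .
\]
By the covariance representation at the end of the subsection on the fractional Brownian sheet, the first two terms equal $s^{2\alpha}t^{2\beta}$ and $s^{2\alpha'}t^{2\beta'}$. By Cauchy--Schwarz in $L^2([0,z])$, the cross term is bounded in absolute value by $2s^{\alpha+\alpha'}t^{\beta+\beta'}$. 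Dividing by $s^{2\alpha}t^{2\beta}$ and writing $\delta(s,t):=s^{\alpha'-\alpha}t^{\beta'-\beta}$, we get
\[
1-2\delta+\delta^2\le \frac{\sigma^2(s,t)}{s^{2\alpha}t^{2\beta}}\le 1+2\delta+\delta^2 .
\]

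\textbf{Step 2: choose $\varepsilon$.} Since $\alpha'>\alpha$ and $\beta'>\beta$, we have $\delta\to0$ as $(s,t)\to 0$. Choose $\varepsilon\in(0,1)$ so that $\delta\le 1/4$ on $(0,\varepsilon)^2$. Then $(1-\delta)^2\ge 9/16\ge 1/4$ and $(1+\delta)^2\le 25/16\le 9/4$. Hence
\[
\tfrac12 s^\alpha t^\beta\le\sigma(s,t)\le\tfrac32 s^\alpha t^\beta \quad\text{on } (0,\varepsilon)^2 .
\]
Because $1-\gamma<0$, raising to the power $1-\gamma$ reverses the inequalities, which is exactly the claimed two-sided bound.

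\textbf{Step 3: integrability, the main obstacle.} On $(0,\varepsilon)^2$ the bound gives $\sigma^{1-\gamma}\le 2^{\gamma-1}s^{-\alpha(\gamma-1)}t^{-\beta(\gamma-1)}$. This is integrable precisely because $\alpha(\gamma-1)<1$ and $\beta(\gamma-1)<1$, i.e. $\gamma<1+\min(1/\alpha,1/\beta)$. The delicate part is the rest of $(0,T]^2$, in particular strips where only one coordinate is small: there $\sigma$ also degenerates, so mere positivity of $\sigma$ is not enough.

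My first attempt is to note that the one-parameter kernels $K^\alpha,K^{\alpha'}$ (and $K^\beta,K^{\beta'}$) are nonnegative, since the Volterra kernels of fBm are positive. Then the cross term is $\ge0$, so globally $\sigma^2(s,t)\ge s^{2\alpha}t^{2\beta}$, hence $\sigma^{1-\gamma}\le s^{-\alpha(\gamma-1)}t^{-\beta(\gamma-1)}$ on all of $(0,T]^2$. This is integrable by the same exponent condition, and the proof is done without any splitting.

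If positivity of the kernels is not to be invoked, I would split the domain instead. On $\{s<\varepsilon_1\}$ with $\varepsilon_1$ chosen so that $\varepsilon_1^{\alpha'-\alpha}T^{\beta'-\beta}\le 1/4$, Step 1 still gives $\delta\le1/4$, and the lower bound on $\sigma$ holds there; the same applies to the symmetric strip $\{t<\varepsilon_1\}$ with $T^{\alpha'-\alpha}\varepsilon_1^{\beta'-\beta}\le1/4$. On the remaining set $[\varepsilon_1,T]^2$, the function $\sigma$ is continuous, and it is strictly positive because $s^{2\alpha}t^{2\beta}>0$ and the kernels are not proportional. Therefore $\sigma^{1-\gamma}$ is bounded there, and the integral over $[0,T]^2$ is finite.
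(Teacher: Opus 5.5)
Your proof of the two-sided bound is the paper's argument in a slightly different form. The paper applies Minkowski's inequality and its reverse in $L^2([0,z])$ to get $s^{\alpha}t^{\beta}|1-\delta|\le\sigma(s,t)\le s^{\alpha}t^{\beta}(1+\delta)$ with $\delta=s^{\alpha'-\alpha}t^{\beta'-\beta}$, and then lets $(s,t)\to0$. Expanding the square and using Cauchy--Schwarz is the same estimate, and your explicit choice $\delta\le\frac14$ just makes the limiting step quantitative.

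The real difference is the integrability claim, which the paper asserts with a bare ``Consequently'' and never proves. You are right that it does not follow from a bound on $(0,\varepsilon)^2$ alone: $\sigma$ vanishes along both entire axes, so $\sigma^{1-\gamma}$ also blows up on the strips $(0,\varepsilon)\times[\varepsilon,T]$ and $[\varepsilon,T]\times(0,\varepsilon)$.

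Your positivity route closes this in one line and is the better of your two options. The Volterra kernels are nonnegative: for $H<\frac12$ both terms in the usual closed form of the fBm kernel are positive, and the kernel is identically $1$ for $H=\frac12$. Hence the cross term is nonnegative and $\sigma^2(s,t)\ge s^{2\alpha}t^{2\beta}$ on all of $(0,T]^2$. This global bound holds in \emph{both} orderings of the indices, so it proves the integrability conclusion exactly as stated. By contrast, the displayed two-sided bound with exponents $(\alpha,\beta)$ is valid near the origin only when $(\alpha,\beta)$ is the smaller pair; your ``exchange of roles'' silently changes the exponents in that display.

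In your fallback splitting, continuity and strict positivity of $\sigma$ on $[\varepsilon_1,T]^2$ each need a line of justification. The cross term factorizes into two one-parameter integrals. By the homogeneity $K^{H}(cs,cu)=c^{H-1/2}K^{H}(s,u)$, it equals $2\rho\,s^{\alpha+\alpha'}t^{\beta+\beta'}$ for a constant $\rho$, which gives continuity. The kernels are not proportional, for instance because of their different singular orders $(s-u)^{\alpha-1/2}$ versus $(s-u)^{\alpha'-1/2}$ as $u\uparrow s$, so $|\rho|<1$. Then $\sigma^2\ge(1-|\rho|)\,s^{2\alpha}t^{2\beta}$ globally, which makes the splitting unnecessary even without invoking positivity.
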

\begin{Proof} From (\ref{e: decompo1}) and by using Cauchy-Schwarz inequality we have for $z=(s, t)$,
\begin{align}
& \sigma(s, t) = \left\{ \int_{0}^{s} \int_{0}^{t} \left( K^{\alpha, \beta}((s, t), (u, v)) +  K^{\alpha', \beta'}((s, t), (u, v)) \right)^2 du dv\right\}^{1/2}\notag\\
&\leq   \left(\int_{0}^{s} \int_{0}^{t} (K^{\alpha, \beta})^2((s, t), (u, v)) du dv \right)^{1/2}  +  \left(\int_{0}^{s} \int_{0}^{t} (K^{\alpha', \beta'})^2((s, t), (u, v))  du dv \right)^{1/2}  \notag\\
&= \left(\int_{0}^{s}  K_{\alpha}^2(s,u) du   \right)^{1/2}  \left(\int_{0}^{t}  K_{\beta}^2(t,v) dv  \right)^{1/2}   + \left(\int_{0}^{s}  K_{\alpha'}^2(s,u) du   \right)^{1/2}  \left(\int_{0}^{t}  K_{\beta'}^2(t,v) dv  \right)^{1/2}  \notag\\
&=s^{\alpha}t^{\beta} + s^{\alpha'}t^{\beta'}= s^{\alpha}t^{\beta}(1 + s^{\alpha'- \alpha}t^{\beta'-\beta}).
\end{align}
Furthermore
\begin{align} & \quad \sigma(s, t) \geq \notag\\
&  \left|  \left(\int_{0}^{s} \int_{0}^{t} (K^{\alpha, \beta})^2((s, t), (u, v)) du dv \right)^{1/2} -  \left(\int_{0}^{s} \int_{0}^{t} (K^{\alpha, \beta'})^2((s, t), (u, v)) du dv \right)^{1/2}  \right|\notag\\
&= |s^{\alpha}t^{\beta} - s^{\alpha'}t^{\beta'}|= s^{\alpha}t^{\beta}|1 - s^{\alpha'- \alpha}t^{\beta'-\beta}|.
\end{align}
Taking into account that $\alpha < \alpha'$ and $\beta < \beta'$, the lemma follows by letting $s$ and $t$ tend to zero. 

\end{Proof}
\section{Appendix}
\subsection{Estimate for the Difference of Two-Parameter Rieman--Liouville Fractional integrals}
Let $0<\alpha, \beta <1$ and $s, t >0$. Let \( f: [0, s] \times [0, t]\longrightarrow \mathbb{R} \) be a measurable bounded function, with \( |f(u,v)| \leq M \). Assume that \( 0 < u \leq  s \), \( 0 < v \leq t \). Consider the two-parameter left-sided Riemann--Liouville fractional integral defined by:
\[
I_{0+}^{\alpha,\beta} f(x,y) := \frac{1}{\Gamma(\alpha)\Gamma(\beta)} \int_0^x \int_0^y (x - u)^{\alpha - 1}(y - v)^{\beta - 1} f(u,v) \, dv \, du.
\]

\begin{prop}\label{Prop:A1} For  \( 0 < x_2 \leq  x_1\leq s \), \( 0 < y_2 \leq y_1\leq t \), we have 
\[
\left| I_{0+}^{\alpha,\beta} f(x_1,y_1) - I_{0+}^{\alpha,\beta} f(x_2,y_2) \right|
\leq  \dfrac{2M(s^{\alpha }+t^{\beta})}{\alpha \beta \Gamma(\alpha)\Gamma(\beta)}  \mathcal{S}\left(x_1, x_2, y_1, y_2\right).
\]
with
\begin{equation}
\mathcal{S}\left(x_1, x_2, y_1, y_2\right):=\left|x_1^\alpha-x_2^\alpha\right|+\left|x_1-x_2\right|^\alpha+\left|y_1^\beta-y_2^\beta\right|+\left|y_1-y_2\right|^\beta,
\end{equation}

\end{prop}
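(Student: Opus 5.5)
We split the difference of the two fractional integrals along the two coordinates, passing through an intermediate point:
\[
I_{0+}^{\alpha,\beta} f(x_1,y_1) - I_{0+}^{\alpha,\beta} f(x_2,y_2)
= \big[I_{0+}^{\alpha,\beta} f(x_1,y_1) - I_{0+}^{\alpha,\beta} f(x_2,y_1)\big] + \big[I_{0+}^{\alpha,\beta} f(x_2,y_1) - I_{0+}^{\alpha,\beta} f(x_2,y_2)\big].
\]
Each bracket is a difference in one variable only, with the other variable frozen. We estimate each bracket by the classical one-parameter argument, while keeping the frozen variable's kernel bounded by its full integral.

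For the first bracket, write $\Gamma(\alpha)\Gamma(\beta)$ times it as the sum of two pieces. The first piece is
\[
\int_{x_2}^{x_1}\int_0^{y_1}(x_1-u)^{\alpha-1}(y_1-v)^{\beta-1}f(u,v)\,dv\,du .
\]
The second piece is
\[
\int_0^{x_2}\int_0^{y_1}\big[(x_1-u)^{\alpha-1}-(x_2-u)^{\alpha-1}\big](y_1-v)^{\beta-1}f(u,v)\,dv\,du .
\]
Using $|f|\le M$ and $\int_0^{y_1}(y_1-v)^{\beta-1}dv=y_1^\beta/\beta\le t^\beta/\beta$, the first piece is bounded by $M\frac{t^\beta}{\beta}\frac{(x_1-x_2)^\alpha}{\alpha}$.

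In the second piece the bracket $(x_2-u)^{\alpha-1}-(x_1-u)^{\alpha-1}$ is nonnegative, so its integral equals
\[
\frac{1}{\alpha}\big(x_2^\alpha - x_1^\alpha + (x_1-x_2)^\alpha\big).
\]
This is at most $\frac1\alpha\big(|x_1^\alpha-x_2^\alpha|+|x_1-x_2|^\alpha\big)$. Hence the second piece is bounded by $M\frac{t^\beta}{\alpha\beta}\big(|x_1^\alpha-x_2^\alpha|+|x_1-x_2|^\alpha\big)$.

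The second bracket is handled symmetrically, with the roles of $(x,\alpha)$ and $(y,\beta)$ exchanged. It is bounded by $M\frac{s^\alpha}{\alpha\beta}\big(|y_1^\beta-y_2^\beta|+2|y_1-y_2|^\beta\big)$, where we used $x_2^\alpha\le s^\alpha$.

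Summing the two brackets gives the bound
\[
\frac{M}{\alpha\beta\Gamma(\alpha)\Gamma(\beta)}\Big(2t^\beta\big(|x_1^\alpha-x_2^\alpha|+|x_1-x_2|^\alpha\big)+2s^\alpha\big(|y_1^\beta-y_2^\beta|+|y_1-y_2|^\beta\big)\Big).
\]
Finally we bound both $t^\beta$ and $s^\alpha$ by $s^\alpha+t^\beta$, which yields the stated constant $2M(s^\alpha+t^\beta)/(\alpha\beta\Gamma(\alpha)\Gamma(\beta))$ times $\mathcal S$.

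The only delicate point is the sign of the kernel difference, which is what lets us evaluate its integral exactly. This uses $\alpha<1$, so that $r\mapsto r^{\alpha-1}$ is decreasing, and the ordering $x_2\le x_1$. Everything else is bookkeeping of constants. One also needs to check that the factor $2$ in front of $|x_1-x_2|^\alpha$ is absorbed by the leading $2$.
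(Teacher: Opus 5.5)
Your proof is correct and is essentially the paper's argument. The paper splits $\Gamma(\alpha)\Gamma(\beta)\Delta$ into the strips $A_1, A_2$ and the common-rectangle term $A_0$; inside $A_0$ the inequality $|ab-cd|\le|a-c||b|+|c||b-d|$ inserts exactly your intermediate kernel $(x_2-u)^{\alpha-1}(y_1-v)^{\beta-1}$, and the signed kernel differences are integrated exactly using the monotonicity of $r\mapsto r^{\alpha-1}$, so your telescoping through $(x_2,y_1)$ is only a regrouping of the same pieces.

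A small advantage of your grouping is that $x_1=x_2$ or $y_1=y_2$ kills a whole bracket, so the one-variable bounds used afterwards can be read off directly, with a factor $2$ on $|y_1-y_2|^\beta$ (or $|x_1-x_2|^\alpha$) that cannot be dropped. In the paper's grouping, by contrast, $x_1=x_2$ kills $A_2$ but not $A_1$ (and $y_1=y_2$ kills $A_1$ but not $A_2$), contrary to the remark inside its proof, which is why the subsequent corollary is off by that factor.
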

\bprf  Set \[
\Delta := \left| I_{0+}^{\alpha,\beta} f(x_1, y_1) - I_{0+}^{\alpha,\beta} f(x_2, y_2) \right|.
\]
We write  \begin{align}\label{delta: decomp}
\Delta=\frac{1}{\Gamma(\alpha)\Gamma(\beta)}(A_0+A_1+A_2),
\end{align}
where
\begin{align*}
A_0&=\int_0^{x_2}\int_0^{y_2}\Bigl[(x_1-u)^{\alpha-1}(y_1-v)^{\beta-1}-(x_2-u)^{\alpha-1}(y_2-v)^{\beta-1}\Bigr]f(u,v)\,dv\,du,\\
A_1&=\int_0^{x_2}\int_{y_2}^{y_1}(x_1-u)^{\alpha-1}(y_1-v)^{\beta-1}f(u,v)\,dv\,du,\\
A_2&=\int_{x_2}^{x_1}\int_0^{y_1}(x_1-u)^{\alpha-1}(y_1-v)^{\beta-1}f(u,v)\,dv\,du.
\end{align*}
Hence
\begin{equation}\label{eq:tri}
|\Delta|\le \frac{1}{\Gamma(\alpha)\Gamma(\beta)}\bigl(|A_0|+|A_1|+|A_2|\bigr).
\end{equation}
At this stage, let us make the following useful and  trivial remark, that we will use later
\begin{remark}\label{Rem:A1} if $x_1=x_2$ or $y_1=y_2$, then $A_1=A_2=0$. 
\end{remark}
\begin{itemize}
    \item[$\bullet$]\textbf{Estimate of $A_0$:}  Since $|f|\leq M$, we have
    \begin{align*}
|A_0|
&\le M\Bigg[
\left(\int_0^{x_2}\bigl((x_2-u)^{\alpha-1}-(x_1-u)^{\alpha-1}\bigr)\,du\right)\left(\int_0^{y_2}(y_1-v)^{\beta-1}\,dv\right)\\
&\hspace{2.6cm}+
\left(\int_0^{x_2}(x_2-u)^{\alpha-1}\,du\right)\left(\int_0^{y_2}\bigl((y_2-v)^{\beta-1}-(y_1-v)^{\beta-1}\bigr)\,dv\right)
\Bigg].
\end{align*}
Using the elementary inequality $|ab-cd|\le |a-c||b|+|c||b-d|$, valid for all real numbers $a, b, c$ and $d$,  we get 
   \begin{align}\label{e:A_0}
|A_0|
&\le \dfrac{M}{\alpha \beta} \left\{ (x_1-x_2)^{\alpha}(y_1^{\beta}-(y_1-y_2)^{\beta})+ x_2^{\alpha}(y_2^{\alpha}-y_1^{\beta}+(y_1-y_2)^{\beta})\right\}.
\end{align}
Then 
   \begin{align}\label{e:A_0 prop}
|A_0|
&\le \dfrac{M}{\alpha \beta} (\mathcal{S} y_1^{\beta} + x_2^{\alpha} \mathcal{S}) \le \dfrac{M( s^{\alpha}+ t^{\beta})}{\alpha \beta} \mathcal{S}
\end{align}
Furthermore, from (\ref{e:A_0}) we deduce that
\begin{eqnarray}\label{e:A_0 modif}
\left\{\begin{array}{l}
|A_0|  \le \dfrac{M x_2^{\alpha}}{\alpha \beta}  \mathcal{S}\,\, \qquad \mbox{if } \, x_1=x_2 , \\
\\
|A_0|  \le \dfrac{M y_1^{\beta}}{\alpha \beta}  \mathcal{S}\,\, \qquad \mbox{if } \, y_1=y_2.
\end{array}\right.
\end{eqnarray}
\item[$\bullet$]\textbf{Estimate of $A_1$ and $A_2$:} Using again $|f|\leq M$, we have   
   \begin{align}\label{e:A_1 prop}
|A_1|
&\le M \left(\int_0^{x_2}(x_1-u)^{\alpha-1}\,du\right)\left(\int_{y_2}^{y_1}(y_1-v)^{\beta-1}\,dv\right)\notag \\
&=\dfrac{M}{\alpha \beta}(y_1-y_2)^{\beta} (x_1^{\alpha}-(x_1- x_2)^{\alpha}) \leq \dfrac{M x_1^{\alpha}}{\alpha \beta} \mathcal{S}\leq \dfrac{M s^{\alpha}}{\alpha \beta} \mathcal{S},
\end{align}
and 
  \begin{align}\label{e:A_2 prop}
|A_2|
&\le \dfrac{M}{\alpha \beta}(x_1-x_2)^{\alpha} y_1^{\beta} \leq \dfrac{M y_1^{\beta}}{\alpha \beta} \mathcal{S}\leq \dfrac{M t^{\beta}}{\alpha \beta} \mathcal{S}.
\end{align}
\end{itemize}
Plugging (\ref{e:A_0 prop}), (\ref{e:A_1 prop}) and (\ref{e:A_2 prop}) in (\ref{delta: decomp}) we get      \begin{align}\label{e:A_1 prop}
|\Delta|\le \frac{M}{\alpha \beta\Gamma(\alpha)\Gamma(\beta)} (2s^{\alpha} +2t^{\beta}) \mathcal{S}.
\end{align}
\nprf
Following the proof of Proposition \ref{Prop:A1}, and using (\ref{e:A_0 modif}) instead of (\ref{e:A_0 prop}) and by taking into account Remark \ref{Rem:A1}, we obtain with the notations of Proposition \ref{Prop:A1}:
\begin{corollary} \label{est:RF-d}
 For every $0\leq u\leq s$ and $0\leq v\leq t$, we have
   \begin{align} \label{est:RF1}
&\left| I_{0+}^{\alpha,\beta} f(u,t) - I_{0+}^{\alpha,\beta} f(u,v) \right| \leq \dfrac{Mu^{\alpha}}{\alpha \beta \Gamma(\alpha)\Gamma(\beta)}\left\{ |t-v|^{\beta}+ |t^{\beta} -v^{\beta}|\right\},
\end{align}
and
\begin{align}\label{est:RF2}
&\left| I_{0+}^{\alpha,\beta} f(s,v) - I_{0+}^{\alpha,\beta} f(u,v) \right| \leq \dfrac{Mv^{\beta}}{\alpha \beta \Gamma(\alpha)\Gamma(\beta)}\left\{ |s-u|^{\alpha}+ |s^{\alpha} -u^{\alpha}|\right\}.
\end{align}
\end{corollary}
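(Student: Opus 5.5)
The plan is to specialize the decomposition \eqref{delta: decomp} used in the proof of Proposition \ref{Prop:A1}. For \eqref{est:RF1} we take $x_1=x_2=u$, $y_1=t$, $y_2=v$ with $v\le t$. For \eqref{est:RF2} we take $y_1=y_2=v$, $x_1=s$, $x_2=u$; this case is symmetric and is not treated separately. The difference to control is $\Delta=\frac{1}{\Gamma(\alpha)\Gamma(\beta)}(A_0+A_1+A_2)$.

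Since $x_1=x_2$, the strip $[x_2,x_1]$ is degenerate, so $A_2=0$. I do \emph{not} expect $A_1$ to vanish, contrary to what Remark \ref{Rem:A1} suggests. Indeed, $A_1=\int_0^{u}\int_v^t (u-r)^{\alpha-1}(t-w)^{\beta-1}f(r,w)\,dw\,dr$ is the contribution of the strip $[0,u]\times[v,t]$. The structural observation is that both remaining terms factorize in the first variable, because the $r$-kernel is the same in both:
\[
I^{\alpha,\beta}_{0+}f(u,t)-I^{\alpha,\beta}_{0+}f(u,v)=\frac{1}{\Gamma(\alpha)\Gamma(\beta)}\int_0^u (u-r)^{\alpha-1}\,D(r)\,dr,
\]
where
\[
D(r):=\int_0^t (t-w)^{\beta-1}f(r,w)\,dw-\int_0^v (v-w)^{\beta-1}f(r,w)\,dw .
\]
Using $|f|\le M$ and $\int_0^u(u-r)^{\alpha-1}dr=u^\alpha/\alpha$, this produces exactly the prefactor $\frac{Mu^\alpha}{\alpha\beta\Gamma(\alpha)\Gamma(\beta)}$. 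It therefore remains to show the one-dimensional bound
\[
\beta\,\sup_r |D(r)|/M\le (t-v)^\beta+(t^\beta-v^\beta).
\]

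To bound $D(r)$, I would split it into two pieces. The first is the tail $\int_v^t(t-w)^{\beta-1}f\,dw$, which is bounded by $M(t-v)^\beta/\beta$. The second is the kernel difference $\int_0^v\big((v-w)^{\beta-1}-(t-w)^{\beta-1}\big)f\,dw$. Its integrand is nonnegative, and it is bounded by $\frac{M}{\beta}\big(v^\beta-t^\beta+(t-v)^\beta\big)$, which is this paper's computation \eqref{e:A_0} with $x_1=x_2$. Adding the two pieces gives
\[
\frac{M}{\beta}\Big(2(t-v)^\beta-(t^\beta-v^\beta)\Big).
\]
The last step is to compare this with $(t-v)^\beta+(t^\beta-v^\beta)$. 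That comparison amounts to $(t-v)^\beta\le 2(t^\beta-v^\beta)$.

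\textbf{Main obstacle.} The comparison $(t-v)^\beta\le 2(t^\beta-v^\beta)$ is exactly where I expect difficulty. It holds when $v$ is small relative to $t$, for example when $v=0$. It fails when $t-v=h\to0$ with $t$ fixed: the left side is of order $h^\beta$, while the right side is of order $\beta t^{\beta-1}h$. The two-piece bound above is attained by choosing $f=M$ on $[v,t]$ and $f=-M$ on $[0,v]$. So for general bounded $f$, I anticipate that the stated inequality with constant exactly $1$ cannot hold uniformly near the diagonal. What the argument does give robustly, for all $f$ with $|f|\le M$, is
\[
\big|I^{\alpha,\beta}_{0+}f(u,t)-I^{\alpha,\beta}_{0+}f(u,v)\big|\le\frac{Mu^\alpha}{\alpha\beta\Gamma(\alpha)\Gamma(\beta)}\Big(2|t-v|^\beta-|t^\beta-v^\beta|\Big).
\]
My first attempt at the corollary as worded would be to exploit extra structure of the functions to which it is applied in Claim \ref{claim: L'}, namely $f_n$ with the power bound of Proposition \ref{prop:f_n}, rather than an arbitrary $f$ bounded by $M$. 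Failing that, I would state the bound in the robust form above. That form only changes $d_5,d_6$ by a harmless factor and leaves the asymptotics of Lemma \ref{lemm: estimations} unchanged.
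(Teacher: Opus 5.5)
Your analysis is correct: the obstruction you found is an error in the paper's statement and proof, not in your argument. The paper proves the corollary along the lines you started. It runs the decomposition $\Gamma(\alpha)\Gamma(\beta)\Delta=A_0+A_1+A_2$ from the proof of Proposition~\ref{Prop:A1}, bounds $A_0$ by the refined estimate for $x_1=x_2$ (resp.\ $y_1=y_2$), and then invokes Remark~\ref{Rem:A1} to discard $A_1$ and $A_2$. That remark is false. For $x_1=x_2=u$ only $A_2$ vanishes, while $A_1$ is the strip integral over $[0,u]\times[v,t]$, of size up to $Mu^\alpha(t-v)^\beta/(\alpha\beta)$. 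Symmetrically, for $y_1=y_2$ only $A_1$ vanishes. Keeping the surviving term gives your bound
\[
\bigl|I^{\alpha,\beta}_{0+}f(u,t)-I^{\alpha,\beta}_{0+}f(u,v)\bigr|\le\frac{Mu^{\alpha}}{\alpha\beta\Gamma(\alpha)\Gamma(\beta)}\Bigl(2(t-v)^{\beta}-(t^{\beta}-v^{\beta})\Bigr).
\]
Your choice $f(r,w)=M$ for $w>v$ and $f(r,w)=-M$ for $w\le v$ attains this with equality for every $u>0$. Hence \eqref{est:RF1} fails whenever $(t-v)^\beta>2(t^\beta-v^\beta)$. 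For instance, $\beta=1/2$, $t=1$, $v=0.99$ gives about $0.195$ on the left against $0.105$ on the right, in units of $Mu^\alpha/(\alpha\beta\Gamma(\alpha)\Gamma(\beta))$. The same construction in the first variable refutes \eqref{est:RF2}. So the corollary as worded cannot be proved, and you were right not to force it.

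For the repair, drop the idea of exploiting the structure of $f_n$. Proposition~\ref{prop:f_n} only gives a size bound on $f_n$; the drift $b$ is merely bounded and measurable, so there is no sign or regularity information to use. Near the diagonal your extremal configuration is compatible with such a bound. Your robust form is the right replacement, and it simplifies further. Since $2(t-v)^\beta-(t^\beta-v^\beta)\le 2(t-v)^\beta$, both \eqref{est:RF1} and \eqref{est:RF2} hold with the constant doubled, and even without the $|t^\beta-v^\beta|$ term. Where the paper uses the corollary (the estimates of the $\tilde L^{i}_n$ and $L^{i,n+1}$ terms), this only changes $d_5$ and $d_6$ by a factor $2$. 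That affects neither the orders $O(n^{-p})$, $O(n^{-\eta})$ of $l_n$ and $r_n$ nor the summability of $\sum_n C^{\star}_n T^{n\gamma}$.
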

\end{itemize}

\end{document}